\documentclass{article}
\usepackage[utf8]{inputenc}

\title{Free metabelian groups are permutation stable}
\author{Hiroki Ishikura}
\date{January 18, 2023}

\usepackage{amsmath}
\usepackage{amssymb}
\usepackage{amsthm}
\usepackage{enumitem}
\usepackage{url}
\usepackage{hyperref}

\numberwithin{equation}{section}

\theoremstyle{plain}
\newtheorem{thm}{Theorem}[section]
\newtheorem{prop}[thm]{Proposition}

\newtheorem{lem}[thm]{Lemma}
\newtheorem{cor}[thm]{Corollary}

\theoremstyle{definition}
\newtheorem{dfn}[thm]{Definition}

\theoremstyle{remark}
\newtheorem{rmk}[thm]{Remark}

\theoremstyle{definition}
\newtheorem{nta}[thm]{Notation}

\begin{document}

\maketitle

\begin{abstract}
We prove that all finitely generated free metabelian groups are permutation stable. This partially answers to the question asked by Levit and Lubotzky whether all finitely generated metabelian groups are permutation stable. Our proof extends the range of application of Levit-Lubotzky's method, which is used to show permutation stability of permutational wreath products of finitely generated abelian groups in \cite{LL22}, to non-split and non-permutational metabelian groups.
\end{abstract}

\section{Introduction}

Permutation stability of groups gets attention in recent years, and many groups have been found to be permutation stable. The aim of this paper is to give a new example of such groups.

\begin{dfn}
Let Sym$(n)$ be the symmetric group of degree $n$. The \textit{Hamming metric} $d_\text{H}$ on Sym$(n)$ is defined by $d_\text{H}(\sigma,\tau)=\frac{1}{n}|\{k\in\{ 1,...,n\}\mid \sigma(k)\neq\tau(k)\}|$.
\end{dfn}

\begin{dfn}
Let $G$ be a countable group. An \textit{almost homomorphism} is a sequence $(\phi_n:G\rightarrow$ Sym$(n))_n$ of maps such that $d_\text{H}(\phi_n(gh),\phi_n(g)\phi_n(h))\rightarrow 0\ (n\rightarrow\infty)$ for every $g,h\in G$.\par
We call $G$ \textit{permutation stable} (or \textit{P-stable} for short) if for every almost homomorphism $(\phi_n:G\rightarrow$ Sym$(n))_n$, there exists a sequence of homomorphisms $(\psi_n:G\rightarrow$ Sym$(n))_n$ such that $d_\text{H}(\phi_n(g),\psi_n(g))\rightarrow 0\ (n\rightarrow\infty)$ for every $g\in G$.
\end{dfn}

Let $F$ be a free group and let $F''$ be its second derived subgroup. The group $F/F''$ is called a \textit{free metabelian group}. Our main theorem is the following:

\begin{thm}\label{intr1}
All finitely generated free metabelian groups are P-stable.
\end{thm}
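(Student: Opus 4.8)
The plan is to derive the theorem from the Becker--Lubotzky--Thom criterion for amenable groups: a finitely generated amenable group is P-stable if and only if every invariant random subgroup (IRS) of it is \emph{co-sofic}, i.e.\ a weak-$*$ limit of IRSs supported on finite-index subgroups. Fix a free group $F$ of finite rank $d$ and set $M=F/F''$. If $d\le1$ then $M\cong\mathbb Z$ is P-stable, so assume $d\ge2$. Since $M$ is finitely generated and metabelian, hence amenable, it suffices to show that every IRS of $M$ is co-sofic; as co-soficity is preserved by convex combinations and weak-$*$ limits and, via ergodic decomposition, reduces to the ergodic case, we may assume the IRS $\mu$ on $\mathrm{Sub}(M)$ is ergodic.

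I would then record the arithmetic of $M$ provided by the Magnus embedding and Fox calculus. There is a short exact sequence $1\to A\to M\to\mathbb Z^d\to1$ in which $A=F'/F''$ is a finitely generated module over the Noetherian ring $R=\mathbb Z[t_1^{\pm1},\dots,t_d^{\pm1}]=\mathbb Z[\mathbb Z^d]$; concretely $A$ is the first syzygy $\ker\bigl(R^d\to R\bigr)$ of the map $(f_i)_i\mapsto\sum_i(t_i-1)f_i$, and $M$ embeds into $R^d\rtimes\mathbb Z^d\cong\mathbb Z^d\wr\mathbb Z^d$. Although $\mathbb Z^d\wr\mathbb Z^d$ is P-stable by \cite{LL22}, P-stability does not pass to subgroups, so a direct analysis of the IRSs of $M$ is needed. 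The ring-theoretic facts I would use are: $R$ is a Jacobson ring all of whose residue fields are finite, so every finitely generated $R$-module is residually finite; and for every finite-index subgroup $Q\le\mathbb Z^d$ the ring $R$ is a finitely generated free $\mathbb Z[Q]$-module, hence $A$ is a finitely generated, and so residually finite, $\mathbb Z[Q]$-module as well. The new features relative to \cite{LL22} are that the displayed extension is non-split and that $A$ is a proper submodule of the free module $R^d$, not a permutation module.

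The core of the argument is an IRS analysis extending the Levit--Lubotzky machinery of \cite{LL22}. To the ergodic IRS $\mu$ I would attach the \emph{shadow data} of a $\mu$-random subgroup $H\le M$: the projection $\bar H=HA/A\le\mathbb Z^d$, the trace $N=H\cap A$ (a $\mathbb Z[\bar H]$-submodule of $A$), and the $2$-cocycle describing how $H$ glues $N$ to $\bar H$ inside $M$. The goal is to approximate these three ingredients simultaneously, measurably, and $M$-equivariantly: (a) replace $\bar H$ by a finite-index subgroup $Q\supseteq\bar H$ of $\mathbb Z^d$ contained in a deep congruence subgroup and close to $\bar H$ in the Chabauty topology, using that $\mathbb Z^d$ has co-sofic IRSs, carried out fibrewise over $\mu$; (b) over such a $Q$, replace $N$ by a finite-index $\mathbb Z[Q]$-submodule $N'$ Chabauty-close to $N$, the point being that the $\mathbb Z[Q]$-submodule $\mathrm{cl}_Q(N)$ generated by $N$ shrinks back to $N$ as $Q$ deepens --- a Noetherian finiteness phenomenon in the finitely generated $R$-module $A$ --- while finite-index $\mathbb Z[Q]$-submodules abound by residual finiteness; (c) lift the pair $(Q,N')$ to a bona fide finite-index subgroup $H'\le M$ with $\bar{H'}=Q$ and $H'\cap A=N'$ that reproduces the gluing of $H$ as closely as needed, the non-splitness being absorbed by taking $N'$ deep enough that the relevant $2$-cocycle becomes a coboundary modulo $N'$, which is possible since $A/N'$ is finite. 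Pushing $\mu$ forward along $H\mapsto H'$ then yields IRSs $\mu_n$ supported on finite-index subgroups of $M$ with $\mu_n\to\mu$, establishing co-soficity.

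The step I expect to be the main obstacle is (b)--(c) when $\bar H$ has infinite index in $\mathbb Z^d$, so that $A$ fails to be finitely generated as a $\mathbb Z[\bar H]$-module and the trace $N=H\cap A$ cannot be controlled directly by residual finiteness. The crucial structural input to be proved is that the submodule part of every IRS of $M$ is nonetheless ``essentially finite-index after restriction of scalars to a finite-index subgroup of $\mathbb Z^d$'', reducing everything to the finitely generated case over $R$; establishing this, and simultaneously arranging that the cocycle and gluing data transport compatibly and measurably so that each $\mu_n$ is a genuine $M$-invariant random subgroup, is precisely where the method of \cite{LL22} must be pushed beyond split permutational wreath products.
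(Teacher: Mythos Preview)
Your high-level framework matches the paper's (BLT criterion, ergodic reduction, shadow data $(\bar H, N_H, \text{gluing})$), but the proposal has genuine gaps exactly where you flag obstacles. Step~(b)'s claim that $\mathrm{cl}_Q(N)$ ``shrinks back to $N$'' as $Q$ deepens is not a general Noetherian fact and you give no argument for it; step~(c)'s hope that the extension cocycle becomes a coboundary once $A/N'$ is finite is unfounded, since finiteness of the quotient does not kill $H^2$; and the pushforward $H\mapsto H'$ must be $M$-equivariant to yield an IRS, for which you provide no mechanism. You yourself describe the missing ingredient as ``the crucial structural input to be proved,'' so what you have is an outline with the key lemma left open rather than a proof.

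The paper's resolution is concretely different from your sketch. It does not build an equivariant map $H\mapsto H'$; instead it uses the Weiss-approximation criterion (Theorem~\ref{pre7} via Corollary~\ref{pre9}): for each possible $R=\bar H\le Q$ it constructs one explicit F{\o}lner sequence $F_n=\widehat{I_n}P_n$, and for each $H$ with $\bar H=R$ and $[N:\mathrm N_N(H)]<\infty$ (which holds $\mu$-a.e.\ by Proposition~\ref{pre8}) it finds finite-index $K_n$ with $(K_n,F_n)$ a Weiss approximation of $H$. Non-splitness is handled by fixing a concrete section $q\mapsto\widehat q$ and the elementary commutator identity of Lemma~\ref{main1}, not cohomologically. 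The non-permutational $\mathbb Z[Q]$-structure of $N$ is handled by the $U$-residue construction of Section~\ref{comms}: one passes to a quotient $N/O_n$ on which the ``transverse'' part $V_n$ of $Q$ acts trivially, and proves (Proposition~\ref{comm12}, via the Laurent-polynomial division of Section~\ref{divs} and the normal-form Proposition~\ref{comm13}) that an explicit $U$-invariant subgroup $M_n\le N$ maps bijectively onto $N/O_n$. This makes the relevant module finitely generated so that Proposition~\ref{pre3} applies inside $N/O_n$ --- the actual substitute for your unproved ``shrinking'' claim.
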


Moreover, the assumption of finite generation can be removed.

\begin{cor}\label{intr4}
All countable free metabelian groups are P-stable.
\end{cor}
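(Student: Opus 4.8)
The plan is to realize any countable free metabelian group as a directed union of finitely generated free metabelian \emph{retracts} and then bootstrap from Theorem~\ref{intr1} by a diagonal argument. Write the group as $M=F/F''$ with $F$ free on a countable set $\{x_1,x_2,\dots\}$; if that set is finite we are already done by Theorem~\ref{intr1}, so assume it is infinite. For each $n$ let $M_n\le M$ be the subgroup generated by the images of $x_1,\dots,x_n$. The endomorphism of $M$ fixing these generators and sending the images of $x_i$ with $i>n$ to $1$ is well defined (it is induced by the corresponding endomorphism of $F$, which preserves $F''$), has image $M_n$, and restricts to the identity on $M_n$; denote it $r_n\colon M\to M_n$. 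Thus $M_n$ is a retract of $M$, and --- a standard consequence of the universal property of free metabelian groups together with the existence of $r_n$ --- $M_n$ is itself free metabelian of rank $n$. Since every element of $M$ is a word in finitely many of the $x_i$, we have $M=\bigcup_n M_n$, with retractions $r_n\colon M\to M_n$ at our disposal for every $n$.

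Now let $(\phi_k\colon M\to\mathrm{Sym}(k))_k$ be an almost homomorphism. For each $n$ its restriction to $M_n$ is an almost homomorphism of $M_n$, so by Theorem~\ref{intr1} there are homomorphisms $(\psi^n_k\colon M_n\to\mathrm{Sym}(k))_k$ with $d_{\mathrm H}(\phi_k(g),\psi^n_k(g))\to 0$ as $k\to\infty$ for every $g\in M_n$. Applying this to the finite set $\{x_1,\dots,x_n\}$ yields an integer $K_n$, which we may take strictly increasing in $n$, such that $d_{\mathrm H}(\phi_k(x_i),\psi^n_k(x_i))<1/n$ whenever $k\ge K_n$ and $i\le n$. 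For $k\ge K_1$ put $n(k):=\max\{n:K_n\le k\}$, so that $n(k)\to\infty$, and define the genuine homomorphism $\psi_k:=\psi^{n(k)}_k\circ r_{n(k)}\colon M\to\mathrm{Sym}(k)$ (and $\psi_k:=1$ for $k<K_1$).

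It remains to check that $d_{\mathrm H}(\phi_k(g),\psi_k(g))\to 0$ for each fixed $g\in M$. Choose $N$ with $g\in M_N$ and write $g=x_{i_1}^{e_1}\cdots x_{i_\ell}^{e_\ell}$ with all $i_j\le N$. For $k\ge K_N$ we have $n(k)\ge N$, hence $r_{n(k)}(g)=g$ and, since $\psi_k$ is a homomorphism, $\psi_k(g)=\psi^{n(k)}_k(x_{i_1})^{e_1}\cdots\psi^{n(k)}_k(x_{i_\ell})^{e_\ell}$. The almost-homomorphism property forces $\phi_k(e)\to e$ and $d_{\mathrm H}(\phi_k(x_i^{-1}),\phi_k(x_i)^{-1})\to 0$, and, via bi-invariance of $d_{\mathrm H}$, bounds $d_{\mathrm H}\bigl(\phi_k(g),\,\phi_k(x_{i_1})^{e_1}\cdots\phi_k(x_{i_\ell})^{e_\ell}\bigr)$ by a finite sum of defects each tending to $0$; meanwhile bi-invariance together with the identity $d_{\mathrm H}(\sigma,\tau)=d_{\mathrm H}(\sigma^{-1},\tau^{-1})$ and telescoping bound $d_{\mathrm H}\bigl(\phi_k(x_{i_1})^{e_1}\cdots\phi_k(x_{i_\ell})^{e_\ell},\,\psi_k(g)\bigr)$ by $\sum_{j=1}^\ell d_{\mathrm H}(\phi_k(x_{i_j}),\psi^{n(k)}_k(x_{i_j}))<\ell/n(k)$. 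Both bounds tend to $0$, so $d_{\mathrm H}(\phi_k(g),\psi_k(g))\to 0$, and $(\psi_k)_k$ witnesses P-stability of $M$.

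The only point requiring genuine care --- and the one around which I would organize the write-up --- is that P-stability provides merely \emph{pointwise} (in the group element) closeness, with no uniformity over $M_n$: at stage $n$ one can pin down only the images of the $n$ generators $x_1,\dots,x_n$, and control of an arbitrary fixed $g$ must then be recovered from the fact that $g$ is a bounded word in those generators, exploiting that $\psi_k$ is an exact homomorphism while $\phi_k$ is multiplicative up to finitely many vanishing errors. The structural input (retracts inside free metabelian groups) and the diagonal choice of $n(k)$ are routine.
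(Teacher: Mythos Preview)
Your proof is correct and follows essentially the same approach as the paper: both exhibit the infinitely generated free metabelian group as an increasing union of finitely generated free metabelian retracts and bootstrap P-stability from Theorem~\ref{intr1} via a diagonal argument. The paper packages this as a general lemma (Lemma~\ref{uni1}) and, by applying P-stability of $M_n$ directly to an arbitrary finite subset $F\subset M_n$ rather than only to the generators, sidesteps your word-length estimate; but the substance is the same.
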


Historically, it is a classical question in operator theory whether ``almost commuting" matrices are ``near" commuting matrices. For example, let $U(n)$ be the unitary group of degree $n$ and $\Vert\cdot\Vert$ a norm on the set of $n\times n$ matrices. If $A_n,B_n\in U(n)$ with $\Vert A_nB_n-B_nA_n\Vert\rightarrow 0\ (n\rightarrow\infty)$, then do there exist $A_n',B_n'\in U(n)$ such that $A_n'B_n'=B_n'A_n'$ and $\Vert A_n-A_n'\Vert+\Vert B_n-B_n'\Vert\rightarrow 0\ (n\rightarrow\infty)$? This asks the stability of the relation $xy=yx$ with respect to the unitary matrices and the norm. The answer to this question is ``No" for the operator norm \cite{V83}, but ``Yes" for the normalized Hilbert-Schmidt norm \cite{G10}. P-stability is a discrete version of this stability, that is, it considers permutations instead of matrices. Indeed if $(U(n),\Vert\cdot\Vert)$ is replaced by $(\text{Sym}(n),d_\text{H})$, then the question asks whether the group $\mathbb{Z}^2=\langle x,y\mid xy=yx\rangle$ is P-stable.\par

The first examples of P-stable groups are found in \cite{GR09}, where all finite groups are shown to be P-stable. In \cite{AP15}, all finitely generated abelian groups (e.g., $\mathbb{Z}^2$) are shown to be P-stable.\par

Recently, great progress was made by \cite{BLT19}, which relates P-stability with invariant random subgroups.

\begin{thm}[{\cite[Theorem 1.3]{BLT19}}]\label{intr2}
Let $G$ be a finitely generated amenable group. Then $G$ is P-stable if and only if every invariant random subgroup of $G$ is co-sofic.
\end{thm}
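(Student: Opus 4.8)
The plan is to recast everything through ultraproducts and the stabilizer IRS, so that ``correctability of an almost homomorphism'' becomes a statement about invariant random subgroups. Fix a finite generating set $S$ of $G$ (this reduction to $S$, rather than all pairs $(g,h)$, is where finite generation is used) and a non-principal ultrafilter $\omega$ on $\mathbb{N}$. Given an almost homomorphism $(\phi_n:G\to\mathrm{Sym}(n))_n$, the Hamming-metric ultraproduct $\prod_\omega\mathrm{Sym}(n)$ is a genuine group (the defect vanishes in the ultralimit), and the diagonal of $(\phi_n)$ is a genuine homomorphism $\phi_\omega:G\to\prod_\omega\mathrm{Sym}(n)$. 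This group acts, preserving measure, on the Loeb space $X_\omega=\prod_\omega(\{1,\dots,n\},\mathrm{unif})$; after restricting to a countably generated $G$-invariant $\sigma$-subalgebra we get a pmp action $G\curvearrowright(X_\omega,\mu_\omega)$ on a standard space. Write $\mathrm{IRS}(\phi_\omega)$ for the law of $x\mapsto\mathrm{Stab}_G(x)$ under $\mu_\omega$. Note that a genuine homomorphism $\psi_n:G\to\mathrm{Sym}(n)$ is an action on a finite set, so its stabilizer IRS is supported on finite-index subgroups; the whole point will be to match $\phi_\omega$ against such finite data.

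For the ``if'' direction, suppose every IRS of $G$ is co-sofic and let $(\phi_n)$ be an almost homomorphism. Form $G\curvearrowright(X_\omega,\mu_\omega)$ and let $\mu:=\mathrm{IRS}(\phi_\omega)$, which by hypothesis is a weak-$*$ limit of stabilizer distributions of finite $G$-actions $\theta_m:G\to\mathrm{Sym}(k_m)$. Since $G$ is amenable, the orbit equivalence relation of $G\curvearrowright(X_\omega,\mu_\omega)$ is hyperfinite (Ornstein--Weiss, Connes--Feldman--Weiss), and the Schreier graphs of the $\theta_m$ form a hyperfinite family; the crucial step is to upgrade Benjamini--Schramm convergence of the $\theta_m$ to $G\curvearrowright X_\omega$ (which is literally convergence of stabilizer IRSs) to \emph{local--global} convergence, valid precisely because of hyperfiniteness (Newman--Sohler; Elek; Hatami--Lov\'asz--Szegedy). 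Local--global convergence lets one, for each $\varepsilon>0$ and each finite $F\subseteq G$, relabel a sufficiently large $\theta_m$ so that it $\varepsilon$-agrees with $\phi_n$ on $F$ for $\omega$-almost every $n$; a diagonal argument over a countable exhaustion by pairs $(\varepsilon,F)$ yields homomorphisms $\psi_n:G\to\mathrm{Sym}(n)$ with $d_\mathrm{H}(\phi_n(g),\psi_n(g))\to0$ along $\omega$ for all $g$. As the hypothesis does not depend on $\omega$, this ultralimit statement upgrades to genuine convergence as $n\to\infty$, so $G$ is P-stable.

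For the ``only if'' direction, assume $G$ is P-stable and let $\mu$ be an IRS of $G$; by ergodic decomposition we may take $\mu$ ergodic and realize it as the stabilizer IRS of a pmp action $G\curvearrowright(Y,\nu)$. Amenability makes this action hyperfinite, hence an increasing union of finite subequivalence relations; a Rokhlin/Ornstein--Weiss tiling argument along a F{\o}lner exhaustion converts these finite pieces into maps $\phi_n:G\to\mathrm{Sym}(n)$ that are almost homomorphisms and whose stabilizer distributions converge weak-$*$ to $\mu$. By P-stability there are homomorphisms $\psi_n:G\to\mathrm{Sym}(n)$ with $d_\mathrm{H}(\phi_n(g),\psi_n(g))\to0$; since Hamming-closeness on $S$ forces closeness of the associated IRSs (again via hyperfiniteness, which makes local statistics continuous in $d_\mathrm{H}$), the IRSs of the $\psi_n$, each supported on finite-index subgroups as $\psi_n$ is a finite action, also converge to $\mu$. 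Hence $\mu$ is co-sofic.

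\textbf{Main obstacle.} The only place where real work is needed, in both directions, is the quantitative dictionary between Hamming-closeness of (almost) permutation representations and proximity of the corresponding pmp actions / IRSs, and with it the passage from Benjamini--Schramm to local--global convergence of hyperfinite graph families. This is exactly where amenability (hyperfiniteness) is indispensable: for non-amenable $G$ one has sofic approximations but IRSs may fail to be co-sofic, and the correction step collapses. Secondary care is required for the non-separability of the Loeb space $X_\omega$ (pass to a countably generated subalgebra) and for keeping the reduction to the finite generating set uniform.
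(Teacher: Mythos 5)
This statement is cited from \cite[Theorem~1.3]{BLT19} and is not proved in the present paper; it is used as a black box (together with Corollary~\ref{pre9}) to reduce P-stability to co-soficity of IRSs. There is therefore no internal proof to compare against, and your attempt is really a reconstruction of Becker--Lubotzky--Thom's argument.

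On its merits, your sketch captures the right scaffolding: the ultraproduct of $(\mathrm{Sym}(n),d_{\mathrm H})$, the Loeb measure, and passage to the stabilizer IRS of the induced pmp action are exactly the tools BLT use, and amenability enters via hyperfiniteness of the resulting orbit equivalence relation. However, the ``if'' direction as written has a real gap. Knowing that $\mu=\mathrm{IRS}(\phi_\omega)$ is a weak-$*$ limit of finite-index IRSs only gives Benjamini--Schramm convergence of some \emph{auxiliary} finite $G$-actions $\theta_m\colon G\to\mathrm{Sym}(k_m)$ (on unrelated set sizes $k_m$) to $\mu$; you still must manufacture honest homomorphisms $\psi_n\colon G\to\mathrm{Sym}(n)$ that are Hamming-close to $\phi_n$ on the \emph{given} $n$-point sets. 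Invoking ``local--global convergence of hyperfinite families'' names the right phenomenon, but the step from a local--global coupling to an actual labelled bijection that makes the permutation maps themselves $\varepsilon$-agree is precisely where the work lies (one has to quasi-tile $\{1,\dots,n\}$ by pieces modeled on the $\theta_m$, control boundary effects via F{\o}lner estimates, and then re-wire $\phi_n$ to an honest action agreeing off a small set). That is the core of the BLT argument and cannot be compressed into ``relabel $\theta_m$''; as stated, the step does not follow.

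Two smaller points: (a) the diagonalization ``over a countable exhaustion by pairs $(\varepsilon,F)$'' must be done uniformly in $n$ along the ultrafilter, not just for each $(\varepsilon,F)$ separately, otherwise the resulting $\psi_n$ need not be a single homomorphism; and (b) in the ``only if'' direction you do not need hyperfiniteness to pass from Hamming-closeness of $\phi_n,\psi_n$ on a generating set $S$ to weak-$*$ closeness of their stabilizer IRSs --- if the maps agree on $S$ outside a set of density $\varepsilon$, then the Schreier $F$-neighborhoods agree outside a set of density at most $|F|\varepsilon$, which is an elementary union bound. Hyperfiniteness is used earlier, to produce the almost homomorphisms $\phi_n$ from the pmp action realizing $\mu$, not at this comparison step.
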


We refer to Section \ref{prels1} for a definition of the terminology. Invariant random subgroups are a notion generalizing normal subgroups and lattices simultaneously, introduced by \cite{AGV14}. Co-soficity is introduced by \cite{G18}. Theorem \ref{intr2} is applied to finding a lot of P-stable groups, such as virtually polycyclic groups, the Baumslag-Solitar groups $B(1,n)$ for all $n\in \mathbb{Z}$ \cite{BLT19}, the Grigorchuk group \cite{Z19}, and other uncountably many groups \cite{LL19}.\par

However characterizing co-soficity of arbitrary invariant random subgroups is still difficult even for solvable groups. Levit and Lubotzky \cite{LL22} give the following P-stable examples of finitely generated metabelian groups by combining Theorem \ref{intr2} with the pointwise ergodic theorem:

\begin{thm}[{\cite[Theorem 1.4]{LL22}}]\label{intr5}
Let $A$ and $Q$ be finitely generated abelian groups, and $X$ a set on which $Q$ acts with only finitely many orbits. Then the semidirect product $Q\ltimes \bigoplus_X A$ induced from the action of $Q$ on $X$ is P-stable.
\end{thm}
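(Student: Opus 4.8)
The plan is to deduce this from Theorem \ref{intr2}. Write $N := \bigoplus_X A$ and $G := Q \ltimes N$. First I would verify the two hypotheses of Theorem \ref{intr2}. The group $G$ is finitely generated: since $A$ is finitely generated and $X$ has only finitely many $Q$-orbits, $N$ is generated as a normal subgroup of $G$ by finitely many elements (pick one point $x_i$ in each orbit, and for each generator $a$ of $A$ take the function equal to $a$ at $x_i$ and $0$ elsewhere; conjugating by $Q$ spreads these over all of $X$), so $G$ is generated by these together with lifts of a finite generating set of $Q$. And $G$ is amenable because it is metabelian: $N$ is an abelian normal subgroup with abelian quotient $Q$, so $G$ is solvable, hence amenable. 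Therefore, by Theorem \ref{intr2}, it suffices to prove that every invariant random subgroup of $G$ is co-sofic.

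So fix an IRS $\mu$, i.e.\ a conjugation-invariant Borel probability measure on $\mathrm{Sub}(G)$; the goal is to exhibit $\mu$ as a weak-$*$ limit of IRSs supported on finite-index subgroups. For a $\mu$-random subgroup $H \le G$ I would record two pieces of data: the trace $M := H \cap N$, a subgroup of the \emph{abelian} group $N$, and the image $\bar H \le Q$ of $H$; the subgroup $H$ is then recovered from $M$, $\bar H$, and a cocycle class describing how $H$ lies over $\bar H$, and here the fact that the extension \emph{splits} is what lets me handle this last datum by local trivialization. Pushing $\mu$ forward along $H \mapsto H\cap N$ yields a $G$-invariant (hence $Q$-invariant, as $N$ is abelian) random subgroup $\nu$ of $N$. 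The decisive structural inputs are: $\mathbb{Z}[Q]$ is Noetherian by the Hilbert basis theorem (as $Q$ is finitely generated abelian), and $N$ is a finitely generated $\mathbb{Z}[Q]$-module (finitely many orbits, $A$ finitely generated), so $N$ is a Noetherian $\mathbb{Z}[Q]$-module; moreover the \emph{permutational} form $N = \bigoplus_X A$ makes the $Q$-action on $\mathrm{Sub}(N)$ transparent in terms of the $Q$-action on $X$, so that genuine Følner-type averaging is available.

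The engine of the approximation is the pointwise ergodic theorem applied to the amenable group $Q$ with a fixed Følner sequence $(F_n)$ in $Q$. The idea is that for $\nu$-a.e.\ $M$ one averages the translates $qMq^{-1}$ with $q \in F_n$: the increasing union over $n$ of the $\mathbb{Z}[Q]$-submodules generated by $\{qMq^{-1} : q \in F_n\}$ stabilizes by Noetherianity, and comparing with $M$ on a $\mu$-generic set lets one replace $\nu$, up to weak-$*$ error, by random subgroups of finite index in $N$; a parallel and easier argument, using that finitely generated abelian groups have almost every subgroup a weak-$*$ limit of finite-index ones, replaces the random image $\bar H \le Q$ by a finite-index subgroup of $Q$. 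Combining these, one builds finite-index subgroups $H_n \le G$ whose associated IRSs converge weak-$*$ to $\mu$; as a consistency check, this is compatible with the fact that finitely generated metabelian groups are residually finite, which already makes the trivial IRS co-sofic. The main obstacle is precisely this combination: one must produce a single finite-index $H_n$ that simultaneously has $H_n\cap N$ of finite index in $N$, has image of finite index in $Q$, and carries a cocycle consistent with that of the $\mu$-random $H$, and then show the weak-$*$ limit is exactly $\mu$. This forces the ergodic averaging to converge on a set of full $\mu$-measure and to mesh with the splitting; verifying this compatibility — essentially, that the $Q$-orbit structure on $X$, the Noetherian module structure of $N$, and the pointwise ergodic theorem fit together to control how the random trace $M$ sits under the random $\bar H$-action — is the technical heart of the argument.
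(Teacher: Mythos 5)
This statement is not proved in the paper: it is \cite[Theorem 1.4]{LL22}, quoted as background. There is therefore no ``paper's own proof'' to compare against; the most one can do is measure your sketch against the Levit--Lubotzky machinery that the paper reviews in Section~\ref{prels} and then adapts in Sections~\ref{mains1}--\ref{mains2}.

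Your outline correctly identifies the opening move (reduce via Theorem~\ref{intr2} to showing every IRS of $G$ is co-sofic), the decomposition of a subgroup $H$ into the data $(Q_H, N_H, \alpha_H)$ of Notation~\ref{pre12}, the role of $\mathbb{Z}[Q]$-Noetherianity, and the fact that the split/permutational structure is what makes the cocycle part tame. Where it thins out is precisely where the actual proof lives. The Levit--Lubotzky argument does not proceed by ``ergodically averaging $qMq^{-1}$ over a F{\o}lner set in $Q$''; it introduces the notion of a Weiss approximation $(K_n,F_n)$ and proves a criterion (Theorem~\ref{pre7}) that converts a.e.\ existence of Weiss approximations along a single F{\o}lner sequence $(F_n)$ of $G$ into co-soficity. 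The F{\o}lner sets $F_n$ must be taken in $G$, not in $Q$, and they are built as products $F_n = \widehat{I_n} P_n$ of a lifted $Q$-part $I_n$ and an $N$-part $P_n$; the $N$-part is essential because $\textup{N}_N(H)$ is only finite index in $N$, and $F_n$ must be a finite-to-one transversal for $\textup{N}_G(K_n)$ in $G$ (Proposition~\ref{pre2}). Verifying convergence $F_n * H - F_n * K_n \to 0$ then reduces to the conditions of Theorem~\ref{pre5}, the hardest of which is the adaptedness of a sequence $T_n \subset N$ to $\widehat{I_n}$ (condition (v)). None of this appears in your sketch, and you flag this yourself as ``the technical heart'' that you have not addressed.

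So: this is a plausible birds-eye summary of the strategy, but not a proof, and the gap is not merely bookkeeping. The ergodic theorem alone does not produce the finite-index $K_n$; one needs the controlled-approximation / adapted-sequence formalism and the interleaved F{\o}lner construction to simultaneously satisfy conditions (i)--(v) of Theorem~\ref{pre5}. Your claim that the F{\o}lner averaging runs over $Q$ should also be corrected: without the $N$-direction in the transversal, the Weiss approximation property (the finite-to-one transversal requirement for $\textup{N}_G(K_n)$) fails whenever $\textup{N}_N(H) \ne N$, which is the generic case.
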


In \cite[Section 1.5]{LL22}, it is asked whether all finitely generated metabelian groups are P-stable. Our Theorem \ref{intr1} answers to this question affirmatively for all free metabelian groups that are basic examples of non-split metabelian groups. 

\begin{rmk}
Let $A,Q,X$ be as in Theorem \ref{intr5}. We regard $N=\bigoplus_X A$ as a $\mathbb{Z}[Q]$-module. An extension of $Q$ by $N$ such that the conjugation on $N$ induces the given $\mathbb{Z}[Q]$-structure is called a \textit{permutational metabelian group} in \cite{LL22}. They conjecture that all permutational metabelian groups are P-stable. The group $F_2/F_2''$ is permutational metabelian, but $F_d/F_d''$ is not if $d\geq 3$ (see Theorem \ref{comm1}).
\end{rmk}

\begin{rmk}
Finitely generated free metabelian groups are already shown to be \textit{Hilbert-Schmidt stable}, that is, they are stable with respect to the unitary groups and the normalized Hilbert-Schmidt norm \cite{LV22}.
\end{rmk}

Our proof of Theorem \ref{intr1} basically follows the method of \cite{LL22}. The main part is the construction of a F{\o}lner sequence with special properties (see Section \ref{prels1}). However there are two differences between the setting of \cite{LL22} and ours. First free metabelian groups are not split metabelian, that is, there is no natural lift of the quotient group $Q=F_d/F_d'$ to $F_d/F_d''$. Hence we have to choose a lift and work with it. Lemma \ref{main1} is an elementary but useful observation. The second difference is that if $d\geq 3$, then $F_d/F_d''$ is not permutational metabelian, and this makes the $\mathbb{Z}[Q]$-structure on the commutator subgroup $N=F_d'/F_d''$ complicated. To apply the method of \cite{LL22}, we need to find a sequence of subgroups $M_n\leq N$ indicated in Theorem \ref{pre5}, and this is the most non-trivial part. Proposition \ref{comm12} is important to find such subgroups.\par

Corollary \ref{intr4} follows from Theorem \ref{intr1} by the universality of free metabelian groups. We note that increasing unions of P-stable groups may not be P-stable in general. For example, there exists an infinitely generated abelian group which is not residually finite. It is an increasing union of finitely generated abelian groups, which are P-stable, but it is not P-stable since amenable P-stable groups must be residually finite \cite{GR09}. \par

\textbf{Organization of the paper.} In Section \ref{prels}, we review the terminology and tools established by \cite{LL22}. In Section \ref{divs}, we consider the division in the ring of Laurent polynomials with finitely many variables and coefficients in $\mathbb{Z}$. This is preparation for Section \ref{comms}, in which we investigate the commutator subgroup of free metabelian groups. Then we prove Theorem \ref{intr1} in Section \ref{mains} and prove Corollary \ref{intr4} in Section \ref{unis}.

\textbf{Acknowledgements.}
I am grateful to my supervisor Professor Yoshikata Kida for his constant support through my master course.

\section{Levit-Lubotzky's method}\label{prels}

\subsection{Weiss approximation}\label{prels1}

\begin{dfn}[{\cite[Definition 15]{G18}}]
Let $G$ be a countable group. Let Sub$(G)$ be the set of subgroups of $G$, and regard it as a closed subspace of $2^G$ with respect to the product topology. We endow the set C(Sub$(G))^*$ with the weak* topology. An \textit{invariant random subgroup} (or \textit{IRS} for short) of $G$ is a Borel probability measure on Sub$(G)$ invariant under the conjugation by $G$. A \textit{finite index} IRS of $G$ is an IRS of $G$ supported on the set of finite index subgroups of $G$. A \textit{co-sofic} IRS of $G$ is a weak*-limit of finite index IRSs of $G$.
\end{dfn}

\begin{dfn}[{\cite[Section 3]{LL22}}]
Let $G$ be a countable group and $H\leq G$ a subgroup. Let $\delta_H\in$ C(Sub$(G))^*$ denote the Dirac measure on $H$. For a non-empty finite subset $F\subset G$, set $F*H=\frac{1}{|F|}\sum_{g\in F}\delta_{gHg^{-1}}\in$ C(Sub$(G))^*$. Let N$_G(H)$ denote the normalizer of $H$ in $G$.\par
A subset $F\subset G$ is a \textit{transversal} for $H$ in $G$ if $G=\bigsqcup_{g\in F}gH$. Also $F$ is a \textit{finite-to-one transversal} for $H$ in $G$ if $F$ is a disjoint union of finitely many transversals for $H$ in $G$.\par
Let $F_n\subset G$ be a sequence of non-empty finite subsets and $K_n\leq G$ a sequence of finite index subgroups. The sequence of pairs $(K_n,F_n)$ is a \textit{Weiss approximation} of $H$ if $F_n$ is a finite-to-one transversal for N$_G(K_n)$ in $G$ for every $n$ and $F_n*H-F_n*K_n\rightarrow 0$ in C(Sub$(G))^*$.
\end{dfn}

\begin{thm}[{\cite[Theorem 3.10]{LL22}}]\label{pre7}
Let $\mu$ be an IRS of an amenable group $G$ and $(F_n)$ a F{\o}lner sequence of $G$. Suppose for $\mu$-a.e.$\ H\in\textup{Sub}(G)$, there exists a sequence of finite index subgroups $K_n\leq G$ such that the sequence of pairs $(K_n,F_n)$ is a Weiss approximation of $H$. Then $\mu$ is a co-sofic IRS of $G$.
\end{thm}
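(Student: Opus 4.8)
The plan is to build, for every $n$, a finite index IRS $\mu_n$ of $G$ directly out of the subgroups furnished by the Weiss approximations, and to check that $\mu_n\to\mu$ in $C(\mathrm{Sub}(G))^*$; by the definition of co-soficity this is exactly what must be shown. Two elementary observations drive the construction. First, if $K\le G$ has finite index and $F\subset G$ is a finite-to-one transversal for $\mathrm N_G(K)$, then $F*K$ equals the uniform probability measure $\nu_K$ on the finite conjugacy class $\{gKg^{-1}:g\in G\}$: decomposing $F$ into transversals for $\mathrm N_G(K)$, each transversal hits every coset of $\mathrm N_G(K)$ exactly once, and $gKg^{-1}$ depends only on $g\,\mathrm N_G(K)$, so every conjugate of $K$ is counted with the same multiplicity. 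In particular $\nu_K$ is a conjugation-invariant measure supported on finite index subgroups, that is, a finite index IRS, and $F_n*K$ is a finite index IRS for every $K$ that is \emph{admissible} for $F_n$ in the sense that $F_n$ is a finite-to-one transversal for $\mathrm N_G(K)$. Second, by $G$-invariance of $\mu$ one has the exact averaging identity $\int_{\mathrm{Sub}(G)}(F_n*H)\,d\mu(H)=\mu$ for every $n$: testing against $f\in C(\mathrm{Sub}(G))$ and interchanging sum and integral, the left-hand side becomes $\frac1{|F_n|}\sum_{g\in F_n}\int f(gHg^{-1})\,d\mu(H)=\frac1{|F_n|}\sum_{g\in F_n}\int f\,d\mu=\int f\,d\mu$.

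Now fix a metric $\rho$ inducing the weak* topology on probability measures on $\mathrm{Sub}(G)$ which is convex under barycenters, for instance $\rho(\alpha,\beta)=\sum_{j\ge1}2^{-j}\bigl|\int f_j\,d(\alpha-\beta)\bigr|$ for a $\|\cdot\|_\infty$-dense sequence $(f_j)$ in the unit ball of $C(\mathrm{Sub}(G))$. Let $\mathcal K_n\subseteq\mathrm{Sub}(G)$ be the set of subgroups admissible for $F_n$ and set $\Phi_n(H)=\inf_{K\in\mathcal K_n}\rho(F_n*H,F_n*K)$. The hypothesis says precisely that for $\mu$-a.e.\ $H$ there exist $K_n\in\mathcal K_n$ with $\rho(F_n*H,F_n*K_n)\to0$, hence $\Phi_n\to0$ $\mu$-a.e.; since $0\le\Phi_n\le2$, dominated convergence gives $\int\Phi_n\,d\mu\to0$. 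Using a measurable selection theorem, choose for each $n$ a $\mu$-measurable map $H\mapsto K_n(H)\in\mathcal K_n$ with $\rho(F_n*H,F_n*K_n(H))\le\Phi_n(H)+\tfrac1n$, and put $\mu_n=\int_{\mathrm{Sub}(G)}(F_n*K_n(H))\,d\mu(H)=\int_{\mathrm{Sub}(G)}\nu_{K_n(H)}\,d\mu(H)$. As a barycenter of conjugation-invariant measures each supported on finitely many finite index subgroups, $\mu_n$ is again a finite index IRS; and by the two observations together with the convexity of $\rho$, $\rho(\mu_n,\mu)=\rho\bigl(\int F_n*K_n(H)\,d\mu,\int F_n*H\,d\mu\bigr)\le\int\rho(F_n*K_n(H),F_n*H)\,d\mu\le\int\Phi_n\,d\mu+\tfrac1n\to0$. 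Hence $\mu$ is a weak* limit of finite index IRSs, i.e.\ co-sofic.

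I expect the main obstacle to be purely measure-theoretic bookkeeping around the selection step: one must verify that $\mathcal K_n$ is Borel (clear, since admissibility of $K$ for the fixed finite set $F_n$ is a condition on which cosets of $\mathrm N_G(K)$ the elements of $F_n$ occupy, and $K\mapsto\mathrm N_G(K)$ has Borel graph), that $\Phi_n$ is universally measurable (its sublevel sets are projections of Borel sets), and that the near-minimizing uniformization $H\mapsto K_n(H)$ exists (Jankov--von Neumann). One can trade this for a reduction to the ergodic case: when $\mu$ is ergodic, the pointwise ergodic theorem for amenable group actions --- which may require passing to a tempered subsequence of $(F_n)$, the point at which amenability and the F{\o}lner condition are used --- gives $F_n*H\to\mu$ weak* (along that subsequence) for $\mu$-a.e.\ fixed $H$, so selecting a single such $H$ that also admits a Weiss approximation $(K_n)$ yields $\nu_{K_n}=F_n*K_n\to\mu$ with no measurable selection at all; the non-ergodic case then follows by integrating the ergodic components, which reintroduces a milder measurability issue. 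In either route the only algebraic input is the identity $F*K=\nu_K$, and everything else is routine.
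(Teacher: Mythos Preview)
The paper does not prove this theorem: it is quoted verbatim from \cite[Theorem 3.10]{LL22} and used as a black box, so there is no proof in the present paper to compare against. Your argument is therefore being measured against a result the author simply imports.

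That said, your outline is essentially correct and is in fact the standard route. The two ``elementary observations'' are exactly right: for $K$ of finite index and $F$ a finite-to-one transversal for $\mathrm N_G(K)$ one has $F*K=\nu_K$, and $\int(F_n*H)\,d\mu(H)=\mu$ by $G$-invariance. Combining these with a near-minimizing measurable selection $H\mapsto K_n(H)$ and the convexity of your metric $\rho$ gives $\mu_n\to\mu$ with each $\mu_n$ a finite index IRS. The only delicate point, which you flag yourself, is the measurable-selection bookkeeping; Jankov--von Neumann handles it once you observe that the relevant sets are analytic. Your alternative sketch via ergodic decomposition and the pointwise (Lindenstrauss) ergodic theorem is indeed closer in spirit to how Levit--Lubotzky argue: in the ergodic case one fixes a single generic $H$ and uses $F_n*H\to\mu$ along a tempered F{\o}lner subsequence, whence $F_n*K_n=\nu_{K_n}\to\mu$ directly, avoiding selection entirely. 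Either route is fine; the first is slightly cleaner in that it never uses temperedness or the pointwise theorem, at the price of the selection step.
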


\begin{prop}[{\cite[Proposition 5.3]{LL22}}]\label{pre8}
Let $G$ be a finitely generated group. Suppose a normal subgroup $N\trianglelefteq G$ and the quotient group $G/N$ are abelian. Let $\mu$ be an IRS of $G$. Then for $\mu$-a.e.$\ H\in\textup{Sub}(G)$, $[N:\textup{N}_N(H)]<\infty$ holds.
\end{prop}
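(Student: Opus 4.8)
The plan is to describe $\mathrm{N}_N(H)$ explicitly in terms of the $\mathbb{Z}[Q]$-module structure on $N$ (where $Q:=G/N$), reduce the claim to a statement about a single ``direction'' $q\in Q$, and then kill the infinite-index case by averaging a fixed Borel observable over a F{\o}lner sequence of $N$ and using conjugation-invariance of $\mu$.

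First I would fix notation. Since $N$ is abelian, the conjugation action of $G$ on $N$ factors through $Q$, making $N$ a $\mathbb{Z}[Q]$-module, which I write additively. For $H\le G$ put $\bar H:=NH/N\le Q$ and $M_H:=N\cap H$; then $M_H$ is a $\mathbb{Z}[\bar H]$-submodule of $N$. A one-line computation gives, for $n\in N$ and $h\in H$ with image $q=\bar h$, the identity $nhn^{-1}=((1-q)n)\,h$, so $nhn^{-1}\in H$ iff $(1-q)n\in M_H$; hence $\mathrm{N}_N(H)=\{\,n\in N : (q-1)n\in M_H \text{ for all } q\in\bar H\,\}$. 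Because $M_H$ is $\bar H$-stable, this intersection is already attained over any finite generating set $S$ of $\bar H$ (finite, since $\bar H\le Q$ and $Q$ is finitely generated): $\mathrm{N}_N(H)=\bigcap_{q\in S}L_q^H$, where $L_q^H:=\{n\in N : (q-1)n\in M_H\}$. A finite intersection of subgroups has infinite index only if one of them does, so $\{H : [N:\mathrm{N}_N(H)]=\infty\}\subseteq\bigcup_{q\in Q}E_q$, where $E_q:=\{H\in\mathrm{Sub}(G) : q\in\bar H,\ [N:L_q^H]=\infty\}$. A short check (using $\overline{gHg^{-1}}=\bar H$ and $L_q^{gHg^{-1}}=\bar g\cdot L_q^H$, which rely on $Q$ being abelian and $N$ normal, so that $[N:L_q^{gHg^{-1}}]=[N:L_q^H]$) shows each $E_q$ is conjugation-invariant. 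Thus it suffices to prove $\mu(E_q)=0$ for every fixed $q\in Q$.

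Fix $q$ and a lift $g\in G$ of it, and suppose toward a contradiction that $\mu(E_q)>0$. For an arbitrary element $h_0=m_0 g$ $(m_0\in N)$ lying above $q$, set $\psi:=\mathbf{1}_{\{H : h_0\in H\}}$, a bounded Borel function on $\mathrm{Sub}(G)$. The computation $n^{-1}h_0 n=(m_0+(q-1)n)\,g$ for $n\in N$, together with the fact that for $H\in E_q$ the set $\{m\in N : mg\in H\}$ is a single coset of $M_H$, gives $\psi(nHn^{-1})=\mathbf{1}[\,n\in D_H\,]$, where $D_H$ is empty or a single coset of $L_q^H$. Choose a F{\o}lner sequence $(F_k)$ for the amenable group $N$. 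Since $[N:L_q^H]=\infty$ on $E_q$, the elementary fact that a coset of an infinite-index subgroup has density $0$ along every F{\o}lner sequence yields $A_k\psi(H):=|F_k|^{-1}\sum_{n\in F_k}\psi(nHn^{-1})=|F_k\cap D_H|/|F_k|\to 0$ pointwise on $E_q$. On the other hand, conjugation-invariance of $\mu$ (and of $E_q$) gives $\int_{E_q}A_k\psi\,d\mu=\int_{E_q}\psi\,d\mu$ for every $k$, so dominated convergence forces $\int_{E_q}\psi\,d\mu=0$, i.e.\ $\mu(\{H\in E_q : h_0\in H\})=0$. Letting $h_0$ range over the countably many lifts of $q$ gives $\mu(\{H\in E_q : q\in\bar H\})=0$; but $E_q\subseteq\{H : q\in\bar H\}$, so $\mu(E_q)=0$, contradicting $\mu(E_q)>0$. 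Hence $\mu(E_q)=0$ for all $q\in Q$, and the proposition follows.

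The commutator identities and the F{\o}lner-density fact are routine; the step I expect to be the main obstacle is the reduction in the second paragraph --- recognizing that $\mathrm{N}_N(H)$, which a priori depends on $H$ through both $\bar H$ and $M_H$, can have infinite index only on account of a single parameter $q\in Q$ whose defining condition $H\in E_q$ is itself conjugation-invariant. This is precisely what legitimizes testing the averaging argument against the \emph{fixed} observable $\mathbf{1}_{\{h_0\in H\}}$ (one for each lift $h_0$ of $q$) rather than an $H$-dependent one, after which invariance of $\mu$ does the rest. Note that finite generation of $G$ enters only through finite generation of $\bar H\le Q$; the module $N$ itself need not be finitely generated as a group.
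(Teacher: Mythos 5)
This proposition is quoted by the paper from \cite[Proposition~5.3]{LL22} without being reproved, so there is no in-paper argument to compare against; I can only certify correctness of your reconstruction, and it is correct. The description $\textup{N}_N(H)=\{n\in N:(q-1)n\in M_H\ \text{for all } q\in\bar H\}$ is right (for $n\in N$ and $h\in H$ with image $q$, one has $nhn^{-1}=((1-q)n)\cdot h$, and the relevant subset of $N$ is closed under inversion); the reduction to a finite generating set $S$ of $\bar H$ is justified because $\{q\in\bar H:(q-1)n\in M_H\}$ is a subgroup of $\bar H$ once $M_H$ is $\bar H$-stable; the inclusion $\{H:[N:\textup{N}_N(H)]=\infty\}\subseteq\bigcup_{q\in Q}E_q$ follows since a finite intersection of subgroups has infinite index only if one factor does; the conjugation-invariance of $E_q$ follows from $L_q^{gHg^{-1}}=\bar g\cdot L_q^H$ (here both $Q$ abelian and $N$ normal are used, exactly as you flag); the set $D_H$ is empty or a single coset of $L_q^H$ because $\{m\in N:mg\in H\}$ is a coset of $M_H$ once $q\in\bar H$; and the zero-density of a coset of an infinite-index subgroup along any F{\o}lner sequence is the standard translation argument. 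The final step — that $\int_{E_q}A_k\psi\,d\mu=\int_{E_q}\psi\,d\mu$ for all $k$ by conjugation-invariance of $\mu$ restricted to the invariant set $E_q$, while dominated convergence drives the left side to $0$ — is clean and, notably, never uses ergodicity or the pointwise ergodic theorem that one might expect from the Levit--Lubotzky framework; invariance plus dominated convergence suffices. Two cosmetic remarks: the argument as written is really a direct proof, since the hypothesis $\mu(E_q)>0$ is never used; and you should record once that $E_q$ is Borel (it is a countable Boolean combination of the clopen cylinder sets $\{H: x\in H\}$, $x\in G$), so that $\mu(E_q)$ is defined.
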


The following Corollary is used in \cite[Section 11]{LL22} to prove their main theorem. We state it explicitly to make our argument simple, and give its proof for the reader's convenience.

\begin{cor}[{\cite{LL22}}]\label{pre9}
Let $G$ be a finitely generated group. Suppose $N\trianglelefteq G$ and $Q=G/N$ are abelian groups. Also suppose that every subgroup $R\leq Q$ admits a F{\o}lner sequence $(F_n)$ of $G$ satisfying the following property ($\#_R$):\\
($\#_R$) If $H\leq G$ is a subgroup with $HN/N=R$ and $[N:\textup{N}_N(H)]<\infty$, then there exists a sequence of finite index subgroups $K_n\leq G$ such that the sequence of pairs $(K_n,F_n)$ is a Weiss approximation of $H$.\par
Then $G$ is P-stable.
\end{cor}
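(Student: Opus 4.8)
The plan is to deduce P-stability from the Benli--Lubotzky--Thom criterion (Theorem \ref{intr2}). Since $N$ and $Q=G/N$ are abelian, $G$ is metabelian, hence amenable, and it is finitely generated by hypothesis; so it suffices to show that every IRS $\mu$ of $G$ is co-sofic.

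First I would reduce to IRSs for which the quantity $HN/N$ is $\mu$-almost surely constant. The quotient map $G\to Q$ induces a Borel map $\textup{Sub}(G)\to\textup{Sub}(Q)$, $H\mapsto HN/N$, which is $G$-conjugation-invariant because $Q$ is abelian; moreover $Q$ is a finitely generated abelian group, hence countable, so $\textup{Sub}(Q)$ is countable. Writing $S_R=\{H\in\textup{Sub}(G)\mid HN/N=R\}$ for $R\leq Q$, the family $\{S_R\}_R$ is therefore a countable, conjugation-invariant Borel partition of $\textup{Sub}(G)$, and $\mu=\sum_{R:\,\mu(S_R)>0}\mu(S_R)\,\mu_R$, where $\mu_R:=\mu(\,\cdot\mid S_R)$ is again an IRS of $G$ (conjugation-invariance of $\mu_R$ follows from that of $\mu$ and of $S_R$). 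Since the space of probability measures on the compact metrizable space $\textup{Sub}(G)$ is weak*-metrizable, and since the co-sofic IRSs form a weak*-closed set that is stable under finite convex combinations (a finite convex combination of finite index IRSs is a finite index IRS), a countable convex combination of co-sofic IRSs is again co-sofic — one takes the renormalized finite truncations, which are co-sofic, and passes to the weak* limit. Hence it is enough to prove that each $\mu_R$ is co-sofic.

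Now fix $R\leq Q$ and apply the hypothesis to obtain a F{\o}lner sequence $(F_n)$ of $G$ with property $(\#_R)$. For $\mu_R$-a.e.\ $H$ we have $HN/N=R$ by the definition of $\mu_R$, and $[N:\textup{N}_N(H)]<\infty$ by Proposition \ref{pre8} applied to the IRS $\mu_R$ (using that $N$ and $Q$ are abelian). Thus $(\#_R)$ supplies, for $\mu_R$-a.e.\ $H$, a sequence of finite index subgroups $K_n\leq G$ such that $(K_n,F_n)$ is a Weiss approximation of $H$, and Theorem \ref{pre7} then shows that $\mu_R$ is co-sofic. Combining this with the previous paragraph, $\mu$ is co-sofic; as $\mu$ was an arbitrary IRS of $G$, Theorem \ref{intr2} yields that $G$ is P-stable.

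The argument is essentially a gluing of the cited results, so the only points needing care are bookkeeping ones: that $H\mapsto HN/N$ is Borel and conjugation-invariant (so that the conditional measures $\mu_R$ are genuine IRSs), and that co-soficity is preserved under countable convex combinations. I do not expect either to be a real obstacle — the mathematical content of the corollary lies entirely in the hypothesis $(\#_R)$ and in Theorems \ref{intr2} and \ref{pre7} together with Proposition \ref{pre8}.
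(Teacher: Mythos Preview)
Your argument is correct and reaches the same conclusion via essentially the same three ingredients (Theorem~\ref{intr2}, Proposition~\ref{pre8}, Theorem~\ref{pre7}), but your reduction step differs from the paper's. The paper invokes the Krein--Milman theorem to reduce to an \emph{ergodic} IRS $\mu$, and then uses ergodicity to conclude that the $G$-invariant map $H\mapsto HN/N$ is $\mu$-a.e.\ constant. You instead observe directly that $\textup{Sub}(Q)$ is countable (since $Q$ is finitely generated abelian), partition $\textup{Sub}(G)$ into the fibres $S_R$, and write $\mu$ as a countable convex combination of the conditioned IRSs $\mu_R$; you then need the extra (easy) fact that the set of co-sofic IRSs is convex and weak*-closed, hence stable under countable convex combinations. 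Your route is more elementary in that it avoids Krein--Milman and ergodic decomposition, but it relies on the countability of $\textup{Sub}(Q)$, which is specific to $Q$ being countable; the paper's ergodicity argument would go through even without that. Either way, once one has an IRS supported on $\{H:HN/N=R\}$, the rest of the proof is identical.
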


\begin{proof}
By Theorem \ref{intr2}, it suffices to show that every IRS of $G$ is co-sofic. Let $\mu$ be an IRS of $G$. By the Krein-Milman theorem, $\mu$ is a weak*-limit of convex combinations of ergodic IRSs of $G$. Thus we may assume that $\mu$ is ergodic. Since the map $H\in$ Sub$(G)\mapsto HN/N\in$ Sub$(Q)$ is $G$-invariant, it is constant on a $\mu$-conull set. Let $R\in$ Sub$(Q)$ be the constant. By Proposition \ref{pre8}, the set
\begin{equation*}
    A:=\{H\in\text{Sub}(G)\mid HN/N=R\text{ and }[N:\textup{N}_N(H)]<\infty\}
\end{equation*}
is $\mu$-conull. By property ($\#_R$), for every $H\in A$, there exists a sequence of finite index subgroups $K_n\leq G$ such that $(K_n,F_n)$ is a Weiss approximation of $H$. By Theorem \ref{pre7}, $\mu$ is co-sofic.
\end{proof}

\subsection{A sufficient condition to be a Weiss approximation}

Let $G$ be a group and $N\trianglelefteq G$ a normal subgroup. Set $Q=G/N$.

\begin{nta}[{\cite[Section 4.1]{LL22}}]\label{pre12}
For a subgroup $H\leq G$, we set
\begin{itemize}
    \item $Q_H=HN/N\leq Q$,
    \item $N_H=N\cap H$, and
    \item let $\alpha_H: Q_H\rightarrow H/N_H$ be the natural isomorphism.
\end{itemize}
 We denote $[H]=[Q_H,N_H,\alpha_H]$. Note that $\overline{\alpha_H(q)}=q$ for every $q\in Q_H$, where $\xi\in H/N_H\mapsto \overline{\xi}\in G/N$ denotes the natural quotient.
\end{nta}

\begin{prop}[{\cite[Proposition 4.1]{LL22}}]\label{pre1}
Let $R\leq Q,\ M\leq N$ be subgroups and $\alpha:R\rightarrow\textup{N}_G(M)/M$ a homomorphism satisfying $\overline{\alpha(q)}=q$ for every $q\in R$. Then there exists a unique subgroup $H\leq G$ such that $[H]=[R,M,\alpha]$.
\end{prop}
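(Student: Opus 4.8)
The plan is to construct $H$ explicitly as the union of the cosets that $\alpha$ dictates, and then check that it carries the prescribed data. View each value $\alpha(q)\in\textup{N}_G(M)/M$ as a subset of $G$, namely a coset of $M$ lying inside $\textup{N}_G(M)$, and set
\begin{equation*}
H=\bigcup_{q\in R}\alpha(q)\ \subseteq\ \textup{N}_G(M).
\end{equation*}
First I would check that $H$ is a subgroup of $G$. Since $\alpha$ is a homomorphism, $\alpha(1)=M$, so $M\subseteq H$ and in particular $1\in H$; and if $g\in\alpha(q)$ and $g'\in\alpha(q')$ then, because $M$ is normal in $\textup{N}_G(M)$, the setwise product of cosets gives $gg'\in\alpha(q)\alpha(q')=\alpha(qq')$ and $g^{-1}\in\alpha(q)^{-1}=\alpha(q^{-1})$, with $qq',q^{-1}\in R$. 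Hence $H\leq G$ (in fact $H\leq\textup{N}_G(M)$).

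Next I would verify $[H]=[R,M,\alpha]$, which amounts to three checks. For $N_H=N\cap H$: the inclusion $M\subseteq N\cap H$ is immediate, and conversely if $g\in N\cap H$ then $g\in\alpha(q)$ for some $q\in R$, so the hypothesis $\overline{\alpha(q)}=q$ forces $q=gN=1_Q$ and hence $g\in\alpha(1)=M$; thus $N_H=M$. For $Q_H=HN/N$: the image of $H$ under $G\to Q$ equals $\{\overline{\alpha(q)}\mid q\in R\}=R$, again by hypothesis, so $Q_H=R$. For $\alpha_H$: given $q\in R$, pick $h\in H$ with $hN=q$; writing $h\in\alpha(q')$, the hypothesis gives $q'=q$, so $hN_H=hM=\alpha(q)$, and since $\alpha_H$ is the natural isomorphism (sending $hN\mapsto hN_H$) this says $\alpha_H(q)=\alpha(q)$. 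Therefore $[H]=[R,M,\alpha]$.

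For uniqueness, let $H'\leq G$ be any subgroup with $[H']=[R,M,\alpha]$. From $N_{H'}=N\cap H'=M$ we get $M\trianglelefteq H'$, hence $H'\leq\textup{N}_G(M)$; in particular $\alpha_{H'}$ and $\alpha$ really do have the common codomain $\textup{N}_G(M)/M$, and $\alpha_{H'}$ sends $hN\mapsto hM$ for $h\in H'$. Now if $h\in H'$ and $q:=hN\in R$, then $hM=\alpha_{H'}(q)=\alpha(q)$, so $h\in\alpha(q)\subseteq H$; hence $H'\subseteq H$. Conversely, if $h\in H$ write $h\in\alpha(q)$ with $q\in R=Q_{H'}$ and choose $h'\in H'$ with $h'N=q$; as above $h'M=\alpha_{H'}(q)=\alpha(q)=hM$, so $h=h'm$ with $m\in M=N_{H'}\subseteq H'$, whence $h\in H'$. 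Thus $H=H'$.

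I expect no genuine obstacle here; the argument is essentially bookkeeping. The one thing that needs care is invoking the normalization hypothesis $\overline{\alpha(q)}=q$ at precisely the right steps — it is exactly what prevents $N_H$ from exceeding $M$ and what identifies the image of $H$ in $Q$ with $R$ — together with the small observation in the uniqueness part that $N_{H'}=M$ already forces $H'\leq\textup{N}_G(M)$, which is what makes the equation $\alpha_{H'}=\alpha$ meaningful in the first place.
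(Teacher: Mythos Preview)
Your argument is correct and is the standard construction: take $H$ to be the preimage in $\textup{N}_G(M)$ of the image of $\alpha$, then verify the three pieces of data and uniqueness exactly as you do. The paper does not supply its own proof of this proposition but simply cites \cite[Proposition~4.1]{LL22}, so there is nothing to compare against here; your write-up would serve perfectly well as a self-contained proof.
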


Let $q\in Q\mapsto\widehat{q}\in G$ be a (set-theoretic) section of the quotient map. For a subset $I\subset Q$, we set $\widehat{I}=\{\widehat{q}\in G\mid q\in I\}$.

\begin{prop}[{\cite[Proposition 4.3]{LL22}}]\label{pre2}
Let $H\leq G$ be a subgroup and let $I\subset Q$ and $T\subset N$ be transversals for $Q_H$ in $Q$ and $N_H$ in $N$, respectively. Then $\widehat{I}T$ is a transversal for $H$ in $G$.
\end{prop}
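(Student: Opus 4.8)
The plan is to check directly that the left cosets $\widehat{q}tH$, as $q$ ranges over $I$ and $t$ over $T$, partition $G$. Two things must be verified: (i) every left coset of $H$ contains an element of $\widehat{I}T$, and (ii) distinct pairs $(q,t)\in I\times T$ yield distinct cosets $\widehat{q}tH$ — which in particular forces the elements $\widehat{q}t$ themselves to be pairwise distinct, so that $\widehat{I}T$ genuinely has $|I|\cdot|T|$ elements. As a consistency check one has the index identity $[G:H]=[G:HN]\cdot[HN:H]=[Q:Q_H]\cdot[N:N_H]=|I|\cdot|T|$, the middle equality using the second isomorphism theorem $HN/H\cong N/N_H$; but I will verify (i) and (ii) by hand regardless.

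For (i), take an arbitrary $g\in G$ and follow its image $\bar{g}$ in $Q$. Since $I$ is a transversal for $Q_H$ in $Q$, there is a unique $q\in I$ with $q^{-1}\bar{g}\in Q_H=HN/N$, hence $\widehat{q}^{\,-1}g\in HN$. Write $\widehat{q}^{\,-1}g=hn$ with $h\in H$ and $n\in N$, and rewrite it as $n'h$ with $n'=hnh^{-1}\in N$, using normality of $N$. Now $T$ is a transversal for $N_H$ in $N$, so there is a unique $t\in T$ with $t^{-1}n'\in N_H\leq H$; then $\widehat{q}^{\,-1}g=t\,(t^{-1}n')\,h\in tH$, since $(t^{-1}n')h\in H$. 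Thus $g\in\widehat{q}tH$.

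For (ii), suppose $\widehat{q}tH=\widehat{q'}t'H$. Passing to $Q=G/N$ and using that $t,t'\in N$ map to the identity while $\widehat{q},\widehat{q'}$ map to $q,q'$, we get $q^{-1}q'\in Q_H$, whence $q=q'$ because $I$ is a transversal for $Q_H$. Cancelling $\widehat{q}$ leaves $tH=t'H$, and since $t,t'\in N$ this gives $t^{-1}t'\in N\cap H=N_H$, whence $t=t'$ because $T$ is a transversal for $N_H$. Together with (i), this shows $\widehat{I}T$ is a transversal for $H$ in $G$.

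I do not expect a real obstacle here; the only point requiring a little care is keeping track of sides. One must convert $hn$ into $n'h$ — equivalently, arrange to apply the transversal $T\subset N$ to an honest element of $N$ rather than to a coset of $H$ — and one must remember that elements of $\widehat{I}$ project onto $I$ in $Q$ while elements of $T$ project to the identity. Once the definitions in Notation \ref{pre12} are unwound, the whole argument is a short coset chase.
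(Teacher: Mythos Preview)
Your argument is correct: the covering step and the disjointness step are both clean, and your remark about rewriting $hn$ as $n'h$ is exactly the small care needed. Note, however, that the paper does not supply its own proof of this proposition --- it is quoted without proof from \cite[Proposition~4.3]{LL22} --- so there is nothing in the present paper to compare your approach against; your direct coset chase is the standard verification and is precisely what one would expect the cited reference to contain.
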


Assume that $G$ is finitely generated, and $Q$ and $N$ are abelian in the rest of this section. We use additive notation for the group operation in $N$ (and use multiplicative notation for $Q$). The action of $Q$ on $N$ is induced from the conjugation by $G$. 

\begin{prop}[{\cite[Proposition 5.2]{LL22}}]\label{pre3}
Let $H\leq G$ be a subgroup such that $HN$ is finitely generated. Then for any finite subset $T\subset N$, there exists a finite index subgroup $M\leq N$ such that $N_H\cap T=M\cap T,\ N_H\leq M$, and $M$ is $Q_H$-invariant.
\end{prop}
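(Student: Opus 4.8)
The plan is to reduce the statement to a module-theoretic fact about finitely generated modules over the group ring $\mathbb{Z}[Q_H]$. First I would set up the structure. Since $HN$ is finitely generated and $HN/N\cong H/N_H\cong Q_H$ is a finitely generated abelian group, hence finitely presented, the normal subgroup $N\trianglelefteq HN$ is the normal closure of a finite subset; as $N$ is abelian and $Q_H$ acts on it by conjugation, this means precisely that $N$ is a finitely generated $\mathbb{Z}[Q_H]$-module. Note also that $N_H=N\cap H$ is $Q_H$-invariant: for $q\in Q_H$ choose $h\in H$ with $hN=q$, and observe that conjugation by $h$ preserves $N$ and $H$, hence $N_H$, while implementing the action of $q$ on $N$. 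Thus $\bar N:=N/N_H$ is again a finitely generated $\mathbb{Z}[Q_H]$-module. It now suffices to find a finite-index $Q_H$-submodule $\bar M\le\bar N$ that avoids the finite set $S$ of nonzero elements of the image $\bar T$ of $T$ in $\bar N$: the preimage $M\le N$ of $\bar M$ contains $N_H$, has index $[\bar N:\bar M]<\infty$, is $Q_H$-invariant, and satisfies $M\cap T=N_H\cap T$, because any $t\in M\cap T$ has $\bar t\in\bar M\cap\bar T\subseteq\{0\}$ and hence $t\in N_H$.

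Treating the finitely many elements of $S$ one at a time and intersecting the resulting submodules (a finite intersection of finite-index $Q_H$-submodules is again one), the crux becomes the following: if $R$ is a finitely generated commutative $\mathbb{Z}$-algebra (in particular Noetherian by the Hilbert basis theorem), $V$ a finitely generated $R$-module, and $v\in V$ is nonzero, then $v$ lies outside some submodule of $V$ of finite index. I would argue this as follows. Let $\mathfrak{a}=\mathrm{Ann}_R(v)\subsetneq R$, and let $\mathfrak{m}\supseteq\mathfrak{a}$ be a maximal ideal of $R$ (which exists since $R/\mathfrak{a}$ is a nonzero finitely generated $\mathbb{Z}$-algebra). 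By the arithmetic Nullstellensatz (maximal ideals of finitely generated $\mathbb{Z}$-algebras have finite residue fields), $R/\mathfrak{m}$ is finite. Put $E=\bigcap_{k\ge 0}\mathfrak{m}^kV$; by the Artin--Rees lemma one has $\mathfrak{m}E=E$, so the determinant trick yields $a\in 1+\mathfrak{m}$ with $aE=0$. If $v$ lay in $E$ we would get $a\in\mathrm{Ann}_R(v)=\mathfrak{a}\subseteq\mathfrak{m}$, contradicting $a\equiv 1\pmod{\mathfrak{m}}$; hence $v\notin\mathfrak{m}^kV$ for some $k$. Finally $\mathfrak{m}^kV$ has finite index in $V$, since the successive quotients $\mathfrak{m}^jV/\mathfrak{m}^{j+1}V$ for $0\le j<k$ are finitely generated modules over the finite field $R/\mathfrak{m}$, hence finite.

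The main obstacle is precisely this module lemma: over the cyclic submodule $Rv$ the statement is immediate, but producing a submodule of finite index in all of $V$ is what forces the Artin--Rees / determinant-trick step, together with the input that finitely generated $\mathbb{Z}$-algebras have finite residue fields at their maximal ideals. The only other point deserving care is the opening reduction, namely that ``$HN$ finitely generated'' really does give finite generation of $N$ as a $\mathbb{Z}[Q_H]$-module; this is where finite presentability of the finitely generated abelian group $Q_H$ is used.
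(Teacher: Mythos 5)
The paper does not reprove this statement: it is imported verbatim from \cite[Proposition~5.2]{LL22} with no \texttt{proof} environment, so there is nothing internal to compare you against. Your reconstruction is correct as a self-contained argument, and it follows the standard route one would expect behind such a statement (a residual-finiteness property of finitely generated modules over the Noetherian ring $\mathbb{Z}[Q_H]$). A few points worth being sure of, all of which you handle correctly: the hypothesis that $HN$ is finitely generated, together with finite presentability of the finitely generated abelian quotient $Q_H$, is exactly what makes $N$ finitely generated as a normal subgroup of $HN$, and since $N$ is abelian the normal closure of a finite set equals the $\mathbb{Z}[Q_H]$-submodule it generates; the determinant trick requires $E=\bigcap_k\mathfrak m^kV$ to be finitely generated, which you get from Noetherianity; and the finiteness of $R/\mathfrak m$ for maximal ideals $\mathfrak m$ of a finitely generated $\mathbb{Z}$-algebra is the key arithmetic input that turns the $\mathfrak m$-adic filtration into finite-index submodules. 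The reduction to separating individual nonzero elements of $\bar T\subset N/N_H$ and intersecting the resulting finite-index $Q_H$-submodules is also fine. In short, the proof is complete and correct; it plausibly mirrors the argument in \cite{LL22}, though I cannot verify that against the text of the present paper since the paper only cites the result.
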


\begin{dfn}
Let $\Gamma$ and $\Lambda$ be groups and $\Delta,\Delta_n\leq \Lambda$ subgroups for $n\in\mathbb{N}$. Following \cite[Definition 6.1]{LL22}, we say that a sequence of homomorphisms $\phi_n:\Gamma\rightarrow \textup{N}_\Lambda(\Delta_n)/\Delta_n$ is \textit{consistent} with a homomorphism $\phi:\Gamma\rightarrow \textup{N}_\Lambda(\Delta)/\Delta$ if for every $\gamma\in\Gamma$, there exists $\lambda\in\Lambda$ such that $\phi_n(\gamma)=\lambda\Delta_n$ and $\phi(\gamma)=\lambda\Delta$ for every $n$.
\end{dfn}

\begin{prop}[{\cite[Corollary 6.3]{LL22}}]\label{pre4}
Let $H\leq G$ be a subgroup and $N_n\leq N$ be a sequence of $Q_H$-invariant subgroups such that $N_n\rightarrow N_H$ in \textup{Sub}$(N)$. Then there exist homomorphisms $\alpha_n:Q_H\rightarrow \textup{N}_G(N_n)/N_n$ defined for all $n$ sufficiently large such that the sequence $\alpha_n$ is consistent with $\alpha_H$.
\end{prop}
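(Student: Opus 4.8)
The plan is to construct each $\alpha_n$ by pushing $\alpha_H$ forward along a fixed finite presentation of $Q_H$, and to show that the only obstructions to doing so are finitely many elements of $N_H$, which $N_n$ is forced to contain once $n$ is large because $N_n\rightarrow N_H$.

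First I would record that $Q=G/N$ is a finitely generated abelian group, hence so is its subgroup $Q_H$, which therefore admits a finite presentation $Q_H=\langle q_1,\dots,q_k\mid r_1,\dots,r_\ell\rangle$. Choose, once and for all, lifts $\lambda_i\in H$ with $\alpha_H(q_i)=\lambda_i N_H$ (possible since $\alpha_H$ is an isomorphism and $\overline{\alpha_H(q_i)}=q_i$). Because each $N_n$ is $Q_H$-invariant and conjugation by $\lambda_i$ acts on $N$ as $q_i$, we get $\lambda_i N_n\lambda_i^{-1}=N_n$, so $\lambda_i\in\textup{N}_G(N_n)$; together with $N\leq\textup{N}_G(N_n)$ (as $N$ is abelian) this gives $\langle\lambda_1,\dots,\lambda_k\rangle\leq\textup{N}_G(N_n)$ for every $n$.

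Next I would identify the obstructions. For each relator $r_j$ the element $x_j:=r_j(\lambda_1,\dots,\lambda_k)\in G$ maps to $r_j(q_1,\dots,q_k)=1$ in $Q$, so $x_j\in N$; and since the $\lambda_i$ lie in $H$, in fact $x_j\in H\cap N=N_H$. Thus $S:=\{x_1,\dots,x_\ell\}$ is a finite subset of $N_H$, and since $N_n\rightarrow N_H$ in $\textup{Sub}(N)$ there is $n_0$ with $S\subseteq N_n$ for all $n\geq n_0$. For such $n$ the assignment $q_i\mapsto\lambda_i N_n$ satisfies every defining relation in $\textup{N}_G(N_n)/N_n$ — indeed $r_j(\lambda_1 N_n,\dots,\lambda_k N_n)=x_j N_n=N_n$ — so by the universal property of the presentation it extends to a homomorphism $\alpha_n:Q_H\rightarrow\textup{N}_G(N_n)/N_n$, defined for all $n\geq n_0$.

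Finally, consistency: given $q\in Q_H$, pick a word $w$ in the generators with $q=w(q_1,\dots,q_k)$ and set $\lambda:=w(\lambda_1,\dots,\lambda_k)\in H\leq G$. Applying the quotient map $\textup{N}_G(N_n)\rightarrow\textup{N}_G(N_n)/N_n$ gives $\alpha_n(q)=w(\lambda_1 N_n,\dots,\lambda_k N_n)=\lambda N_n$ for every $n\geq n_0$, and likewise $\alpha_H(q)=\lambda N_H$; thus the \emph{same} $\lambda$ realizes $\alpha_n(q)$ and $\alpha_H(q)$, which is exactly the consistency condition. (En route one also checks $\overline{\alpha_n(q)}=\lambda N=q$, so each $\alpha_n$ is admissible for Proposition \ref{pre1}.) The hard part here is really only bookkeeping: one must ensure the lift $\lambda$ is chosen uniformly in $n$, which is why the presentation and the lifts $\lambda_i$ are fixed before $n$ enters the picture, and finiteness of the presentation — the single place where finite generation of $G$ is used — is what makes the obstruction set $S$ finite, so that the convergence $N_n\rightarrow N_H$ can absorb it from some index on.
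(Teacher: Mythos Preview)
The paper does not give its own proof of this proposition; it is quoted from \cite[Corollary~6.3]{LL22} without argument. Your proof is correct and is the expected one: take a finite group presentation of $Q_H$ (available since $Q$, hence $Q_H$, is finitely generated abelian), lift the generators into $H$, note that the finitely many relator-evaluations lie in $N_H$ and are therefore absorbed by $N_n$ for all large $n$ via $N_n\to N_H$, and read off consistency from the fact that the lifts $\lambda_i$ were fixed independently of $n$.
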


\begin{dfn}[{\cite[Definition 7.2]{LL22}}]
Let $T_n\subset N$ and $F_n\subset G$ be sequences of non-empty finite subsets. The sequence $T_n$ is \textit{adapted} to the sequence $F_n$ if for every $g\in G$ and every finite subset $\Phi\subset N$,
\begin{equation}
    \frac{|\{h\in F_n\mid [g,h]+\Phi\subset T_n\}|}{|F_n|}\rightarrow 1\ (n\rightarrow\infty).\label{pre10}
\end{equation}
\end{dfn}

\begin{rmk}\label{pre11}
By taking $g$ to be the identity of $G$, convergence (\ref{pre10}) implies $\Phi\subset T_n$ for every $n$ sufficiently large. Hence if $T_n$ is adapted to some $F_n$, then $\bigcup_n\bigcap_{k\geq n}T_k=N$ holds.
\end{rmk}

\begin{thm}[{\cite[Theorem 7.5]{LL22}}]\label{pre5}
Let $H\leq G$ be a subgroup with $[N:\textup{N}_N(H)]<\infty$. For every $n\in\mathbb{N}$, let $K_n\leq G$ be a finite index subgroup with $[K_n]=[Q_n, N_n, \alpha_n]$, $M_n\leq N$ a subgroup, and $I_n\subset Q$ and $T_n\subset M_n$ 
non-empty finite subsets. Assume that they satisfy the following conditions:
\begin{enumerate}
    \item $Q_H\leq Q_n$ and $Q_n\rightarrow Q_H$ in \textup{Sub}$(Q)$.
    \item $M_n$ is $Q_H$-invariant.
    \item The sequence $\alpha_n|_{Q_H}$ is consistent with $\alpha_H$.
    \item $N_H\cap T_n=N_n\cap T_n$ and $N_H\cap M_n\leq N_n$.
    \item The sequence $T_n$ is adapted to the sequence $\widehat{I_n}$.
\end{enumerate}
Then for any sequence $P_n\subset M_n$ of non-empty finite subsets, we have
\begin{equation*}
    (\widehat{I_n}P_n)*H - (\widehat{I_n}P_n)*K_n \rightarrow 0\text{ in }\textup{C(Sub}(G))^*
\end{equation*}
\end{thm}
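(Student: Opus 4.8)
The plan is to test the convergence against cylinder functions and reduce it to a counting statement about conjugates of a fixed element. The indicators $L\mapsto\mathbf 1[L\cap S=S_0]$, for finite $S\subset G$ and $S_0\subset S$, span a dense subspace of $\mathrm C(\mathrm{Sub}(G))$, and $(\widehat{I_n}P_n)*H$ assigns to $\{L:L\cap S=S_0\}$ the mass $\frac1{|I_n||P_n|}\bigl|\{g\in\widehat{I_n}P_n:gHg^{-1}\cap S=S_0\}\bigr|$ (here $\widehat qt\mapsto(q,t)$ is injective, so $|\widehat{I_n}P_n|=|I_n||P_n|$), similarly for $K_n$. A union bound over $s\in S$ therefore reduces the theorem to showing, for every fixed $s\in G$,
\begin{equation*}
\frac1{|I_n||P_n|}\Bigl|\bigl\{(q,t)\in I_n\times P_n:(\widehat qt)^{-1}s(\widehat qt)\text{ lies in exactly one of }H\text{ and }K_n\bigr\}\Bigr|\longrightarrow 0 .
\end{equation*}

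Fix $s$ and let $r=\bar s\in Q$. If $r\notin Q_H$, then $(\widehat qt)^{-1}s(\widehat qt)$ projects to $r\notin Q_H=HN/N$ and hence never lies in $H$; by condition 1 also $r\notin Q_n$ for $n$ large, so it never lies in $K_n$, and the count is eventually $0$. Assume $r\in Q_H\le Q_n$. By condition 3 there is $\lambda\in G$, independent of $n$, with $\alpha_n(r)=\lambda N_n$, $\alpha_H(r)=\lambda N_H$; then $\lambda\in H\cap K_n$, $\bar\lambda=r$, and $\lambda N_H$ (resp.\ $\lambda N_n$) is exactly the set of elements of $H$ (resp.\ $K_n$) projecting to $r$. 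Hence, setting $w(g):=\lambda^{-1}g^{-1}sg\in N$, we have $g^{-1}sg\in H\Leftrightarrow w(g)\in N_H$ and $g^{-1}sg\in K_n\Leftrightarrow w(g)\in N_n$. A short commutator computation (valid because $N$ and $Q$ are abelian) gives, for $g=\widehat qt$,
\begin{equation*}
w(\widehat qt)=c_q+(1-r^{-1})\cdot t,\qquad c_q:=\lambda^{-1}\widehat q^{-1}s\widehat q\in N ,
\end{equation*}
the dot denoting the action of $Q$ on $N$; note also $c_q=[s,\widehat q]+\lambda^{-1}s$ in the additive notation of $N$. So we must bound the number of pairs $(q,t)$ with $w(\widehat qt)\in N_H\triangle N_n$.

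Two facts organise the estimate. First, condition 4 gives $N_H\cap T_n=N_n\cap T_n$, so $N_H\triangle N_n$ is disjoint from $T_n$. Second --- this is where $[N:\mathrm N_N(H)]<\infty$ enters --- the subgroup $N_r:=\{x\in N:(1-r^{-1})x\in N_H\}$ has finite index in $N$, because it contains $\mathrm N_N(H)$. Fix once and for all a transversal $x_1,\dots,x_e$ for $N_r$ in $N$. By Remark \ref{pre11} (applied to $T_n\subset M_n$), each fixed element of $N$ lies in $M_n$ for all $n$ large; so for $n$ large $M_n$ contains every $x_j$ and $M_n=\bigsqcup_{j=1}^e\bigl((M_n\cap N_r)+x_j\bigr)$. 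For $t\in P_n$ write $t=t'+x_{j(t)}$ with $t'\in M_n\cap N_r$. Then $(1-r^{-1})t'\in N_H$ by definition of $N_r$, and $(1-r^{-1})t'\in M_n$ by condition 2 (as $r^{-1}\in Q_H$), so $(1-r^{-1})t'\in N_H\cap M_n\le N_n$ by condition 4. Thus $(1-r^{-1})t$ differs from the \emph{fixed} element $v_{j(t)}:=(1-r^{-1})x_{j(t)}$ by an element of $N_H\cap N_n$, whence
\begin{equation*}
w(\widehat qt)\in N_H\triangle N_n\quad\Longleftrightarrow\quad c_q+v_{j(t)}\in N_H\triangle N_n .
\end{equation*}

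Finally, let $V:=\{v_1,\dots,v_e\}\subset N$, a fixed finite set, and $\Phi:=\lambda^{-1}s+V\subset N$, also fixed (since $\lambda$ and the $x_j$ are). Applying the adaptedness of $T_n$ to $\widehat{I_n}$ (condition 5) with the element $s$ and the finite set $\Phi$: the set of $q\in I_n$ with $[s,\widehat q]+\Phi\subseteq T_n$, i.e.\ with $c_q+V\subseteq T_n$, has density tending to $1$. For such a ``good'' $q$ and any $t$ one gets $c_q+v_{j(t)}\in T_n$, so $c_q+v_{j(t)}\notin N_H\triangle N_n$ by the first fact, so $(q,t)$ does not contribute. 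Hence for $n$ large every contributing pair has $q$ non-good, so the count is at most (number of non-good $q$) $\cdot\,|P_n|=o(|I_n|)\cdot|P_n|$, as required. The one genuinely delicate point is the penultimate paragraph, which trades the unbounded, $n$-dependent translation set $(1-r^{-1})P_n$ for the fixed finite set $V$ modulo $N_n$; after that the adaptedness of $T_n$ to $\widehat{I_n}$ alone closes the argument, even though it says nothing directly about $\widehat{I_n}P_n$.
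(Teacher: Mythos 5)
The paper does not actually prove this statement — it cites it as \cite[Theorem~7.5]{LL22} and only records in a remark that among the original hypotheses of that theorem only condition~(v) is used — so there is no in-paper proof to compare against. Your argument, however, is correct and self-contained, and it accounts for exactly the hypotheses the theorem lists: the cylinder-function reduction to counting conjugates $g^{-1}sg$ of a fixed $s$; consistency (iii) to produce a single $\lambda$ with $g^{-1}sg\in H\Leftrightarrow w(g):=\lambda^{-1}g^{-1}sg\in N_H$ and $g^{-1}sg\in K_n\Leftrightarrow w(g)\in N_n$; the identity $w(\widehat qt)=c_q+t^{1-r}$ with $c_q=[s,\widehat q]+\lambda^{-1}s$; and, crucially, the observation that $N_r=\{x\in N: x^{1-r}\in N_H\}$ contains $\mathrm N_N(H)$ and hence has finite index, so a fixed transversal $x_1,\dots,x_e$ lets you replace the $n$-dependent set $\{t^{1-r}:t\in P_n\}$ by the fixed finite set $V=\{x_j^{1-r}\}$ modulo $N_H\cap N_n$, which is where conditions (ii) and (iv) enter ($t'^{1-r}\in M_n$ by (ii), $\in N_H$ by definition of $N_r$, so $\in N_H\cap M_n\le N_n$ by (iv)). Condition (iv) also makes $N_H\triangle N_n$ disjoint from $T_n$, and condition (v) applied to $s$ and the fixed $\Phi=\lambda^{-1}s+V$ then gives that $c_q+V\subset T_n$ for a $(1-o(1))$-fraction of $q\in I_n$, closing the estimate. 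The one thing I'd spell out slightly more is that the weak$^*$ estimate must be done separately for each $s\in S$ (since $\lambda$, $r$, $N_r$ and $V$ all depend on $s$) and then summed over the finite set $S$, but that is routine. This is almost surely the same mechanism as in \cite{LL22}; in particular it vindicates the paper's remark that only the weaker adaptedness condition (v) — applied only to $\widehat{I_n}$, never to $\widehat{I_n}P_n$ — is needed.
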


\begin{rmk}
The original statement of \cite[Theorem 7.5]{LL22} is different from the above. They assume the following two conditions (see \cite[Definitions 7.1 and 7.3]{LL22}): 
\begin{enumerate} [label=(\alph*)]
    \item $(K_n,M_n,T_n)$ is a \textit{controlled approximation} of $H$.
    \item $\widehat{I_n}P_n$ is \textit{adapted} to $(K_n,M_n,T_n)$.
\end{enumerate}
Condition (a) claims conditions (i)-(iv) and $\bigcup_n\bigcap_{k\geq n}T_k=N$ holds. The last condition follows from condition (v) by Remark \ref{pre11}. Condition (b) is stronger than condition (v), but they only use condition (v) for the proof of this theorem.
\end{rmk}

\section{Division of Laurent polynomials}\label{divs}

Throughout the paper, we mean by an interval $I\subset \mathbb{R}$ the intersection $I\cap \mathbb{Z}$ if there is no cause of confusion. Let $\mathbb{N}$ be the set of positive integers. Let $\lceil\cdot\rceil:\mathbb{R}\rightarrow\mathbb{Z}$ be the ceiling function, and $\lfloor\cdot\rfloor:\mathbb{R}\rightarrow\mathbb{Z}$ the floor function.

\begin{dfn}
Let $\phi\in\mathbb{Z}[s^{\pm 1}]$ be a nonzero polynomial. Let $m$ be the smallest degree of $s$ in $\phi$ and let $n$ be the non-negative integer such that $m+n$ is the largest degree of $s$ in $\phi$. We call the integer $n$ the \textit{degree} of $\phi$. We say that $\phi$ is \textit{monic} if each of the coefficients of $s^m$ and $s^{m+n}$ belongs to $\{\pm 1\}$.
\end{dfn}

For $n\in\mathbb{N}$, set

\begin{equation*}
    M(n;s)=\text{span}_\mathbb{Z}\{s^m\mid -\lceil n/2\rceil <m\leq\lfloor n/2 \rfloor\}\subset\mathbb{Z}[s^{\pm 1}].
\end{equation*}

\begin{lem}\label{div1}
Let $\phi\in \mathbb{Z}[s^{\pm 1}]$ be a monic polynomial with degree $n>0$. Then for every $\psi\in\mathbb{Z}[s^{\pm 1}]$, there exists $\theta\in\mathbb{Z}[s^{\pm 1}]$ such that $\psi-\theta\phi\in M(n;s)$.
\end{lem}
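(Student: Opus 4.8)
The plan is to carry out a two-sided Euclidean division by $\phi$, using that $\phi$ is monic at \emph{both} of its extreme exponents so that the division can be performed over $\mathbb{Z}$ without introducing denominators. Write $\phi=\sum_{i=m}^{m+n}a_i s^i$ with $a_m,a_{m+n}\in\{\pm1\}$. First note that, since $\lceil n/2\rceil+\lfloor n/2\rfloor=n$, the module $M(n;s)$ consists precisely of the Laurent polynomials all of whose exponents lie in the block of $n$ consecutive integers $\{-\lceil n/2\rceil+1,\dots,\lfloor n/2\rfloor\}$. For a nonzero $\psi$ write $\deg_+\psi$ and $\deg_-\psi$ for its largest and smallest exponents.

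\textbf{Step 1 (clearing high exponents).} Given $\psi\neq 0$ with $k:=\deg_+\psi>\lfloor n/2\rfloor$, let $c$ be the coefficient of $s^k$ in $\psi$ and replace $\psi$ by $\psi-ca_{m+n}s^{\,k-m-n}\phi$. Because $a_{m+n}^2=1$ this kills the $s^k$-term, and the subtracted polynomial involves no exponent larger than $k$, so the new top exponent is strictly smaller. The top exponents occurring in this loop form a strictly decreasing sequence of integers that remain $>\lfloor n/2\rfloor$ until we stop, so the loop halts after finitely many steps. Collecting all the subtracted multiples of $\phi$ into a single $\theta_1\in\mathbb{Z}[s^{\pm1}]$, we may henceforth assume $\psi=0$ or $\deg_+\psi\le\lfloor n/2\rfloor$.

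\textbf{Step 2 (clearing low exponents).} Now suppose $\deg_+\psi\le\lfloor n/2\rfloor$ and $k:=\deg_-\psi\le-\lceil n/2\rceil$. Let $c$ be the coefficient of $s^k$ and replace $\psi$ by $\psi-ca_m s^{\,k-m}\phi$. Since $a_m^2=1$ this kills the $s^k$-term, the subtracted polynomial has exponents in $[k,k+n]$, and the new bottom exponent is strictly larger. The key point is that $k\le-\lceil n/2\rceil$ forces $k+n\le-\lceil n/2\rceil+n=\lfloor n/2\rfloor$, so this replacement introduces no exponent above $\lfloor n/2\rfloor$ and the conclusion of Step 1 is preserved. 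Again the loop halts after finitely many steps (the bottom exponents strictly increase while staying $\le-\lceil n/2\rceil$), and collecting the subtracted multiples into $\theta_2$ we reach a polynomial supported in $\{-\lceil n/2\rceil+1,\dots,\lfloor n/2\rfloor\}$, i.e.\ in $M(n;s)$. Then $\theta=\theta_1+\theta_2$ satisfies $\psi-\theta\phi\in M(n;s)$.

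The only point requiring care, and the one that pins down the hypotheses, is the compatibility claim in Step 2 that reducing from below does not re-create high exponents: this uses both that $\phi$ is monic at its lowest exponent (so the cancellation stays integral) and the identity $\lceil n/2\rceil+\lfloor n/2\rfloor=n$ (so the target window has width exactly the degree of $\phi$). Everything else is routine division bookkeeping; one could alternatively package Steps 1 and 2 as a single induction on $\max(0,\deg_+\psi-\lfloor n/2\rfloor)+\max(0,-\lceil n/2\rceil-\deg_-\psi)$, but the two-pass description above seems cleanest.
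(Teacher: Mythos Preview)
Your proof is correct and takes essentially the same approach as the paper: both carry out Euclidean division using that $\phi$ is monic at each extreme exponent, together with the identity $\lceil n/2\rceil+\lfloor n/2\rfloor=n$ to land in the width-$n$ window $M(n;s)$. The only cosmetic difference is packaging: the paper first reduces by $\mathbb{Z}$-linearity to monomials $\psi=s^k$ and then performs a single-direction division (from the top if $k>0$, from the bottom if $k\le 0$) after a shift into $\mathbb{Z}[s]$ or $\mathbb{Z}[s^{-1}]$, whereas you run the two passes directly on an arbitrary $\psi$; your observation that Step~2 never recreates exponents above $\lfloor n/2\rfloor$ is exactly what makes the two-pass version go through.
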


\begin{proof}
Let $m$ be the smallest degree of $s$ in $\phi$. Since $M(n;s)$ is closed under linear combination with coefficients in $\mathbb{Z}$, we may assume $\psi=s^k$ with $k\in \mathbb{Z}$. Suppose $k>0$. We may assume that the coefficient of $s^{n+m}$ in $\phi$ is $1$. Since $s^{-m}\phi\in\mathbb{Z}[s]$ is a monic polynomial of $s$ with degree $n$ in the usual sense, there exists $\theta\in\mathbb{Z}[s]$ such that the polynomial
\begin{equation*}
    s^{k+\lceil n/2\rceil-1}- \theta\cdot s^{-m}\phi\in\mathbb{Z}[s]
\end{equation*}
of $s$ has degree less than $n$. Then by multiplying $s^{1-\lceil n/2\rceil}$ to this polynomial, we have
\begin{equation*}
    s^k-s^{-m+1-\lceil n/2\rceil}\theta\cdot\phi\in M(n;s).
\end{equation*}
as desired. Next suppose $k\leq 0$. We may assume that the coefficient of $s^{m}$ in $\phi$ is $1$. Since $s^{-n-m}\phi\in\mathbb{Z}[s^{-1}]$ is a monic polynomial of $s^{-1}$ with degree $n$ in the usual sense, there exists $\theta\in\mathbb{Z}[s^{-1}]$ such that the polynomial
\begin{equation*}
    s^{k-\lfloor n/2 \rfloor}-\theta\cdot s^{-n-m}\phi\in\mathbb{Z}[s^{-1}]
\end{equation*}
of $s^{-1}$ has degree less than $n$. Then by multiplying $s^{\lfloor n/2 \rfloor}$ to this polynomial, we have
\begin{equation*}
    s^k-s^{-n-m+\lfloor n/2 \rfloor}\theta\cdot\phi\in M(n;s)
\end{equation*}
as desired.
\end{proof}

Let $d\in\mathbb{N}$. Now we consider the ring $\mathbb{Z}[s_1^{\pm 1},...,s_d^{\pm 1}]$. For $n_1,...,n_d\in\mathbb{N}\cup\{\infty\}$, set
\begin{align*}
    M(n_1,...,n_d;s_1,...,s_d)
    &=\text{span}_\mathbb{Z}\{s_1^{m_1}...s_d^{m_d}\mid m_i\in(-\lceil n_i/2\rceil,\lfloor n_i/2 \rfloor]\text{ for every }i \}\\
    &\subset \mathbb{Z}[s_1^{\pm 1},...,s_d^{\pm 1}],
\end{align*}
where $(-\lceil \infty/2\rceil,\lfloor \infty/2 \rfloor]$ means $(-\infty,\infty)$.

\begin{prop}\label{div2}
Let $1\leq c\leq d$. For every $1\leq i\leq c$, let $\phi_i\in\mathbb{Z}[s_i^{\pm 1}]$ be a monic polynomial with degree $n_i>0$. Then for every $\psi\in \mathbb{Z}[s_1^{\pm 1},...,s_d^{\pm 1}]$, there exist $\theta_i\in\mathbb{Z}[s_1^{\pm 1},...,s_d^{\pm 1}]$ for $1\leq i\leq c$ and $\lambda\in M(n_1,...,n_c,\infty,...,\infty;s_1,...,s_d)$ such that
\begin{equation*}
    \psi=\sum_{i=1}^c\theta_i\phi_i+\lambda.
\end{equation*}
Moreover such $\lambda\in M(n_1,...,n_c,\infty,...,\infty;s_1,...,s_d)$ is unique, that is, if 
\begin{equation*}
    \psi=\sum_{i=1}^c\theta_i'\phi_i+\lambda'
\end{equation*}
with $\theta_i'\in\mathbb{Z}[s_1^{\pm 1},...,s_d^{\pm 1}]$ and $\lambda'\in M(n_1,...,n_c,\infty,...,\infty;s_1,...,s_d)$, then $\lambda=\lambda'$.
\end{prop}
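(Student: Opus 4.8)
The plan is to reduce Proposition \ref{div2} to iterated application of Lemma \ref{div1}, treating one variable at a time. First I would prove the existence part by induction on $c$. For $c=1$, regard $\psi\in\mathbb{Z}[s_1^{\pm 1},\dots,s_d^{\pm 1}]$ as a Laurent polynomial in $s_1$ with coefficients in the ring $R_1=\mathbb{Z}[s_2^{\pm 1},\dots,s_d^{\pm 1}]$. The key point is that Lemma \ref{div1} is proved by reducing to monomials $\psi=s^k$ and only uses that $\phi$ is monic, so the division algorithm it provides works verbatim over the coefficient ring $R_1$ (the leading/trailing coefficients of $\phi_1$ being $\pm 1$ means no division in $R_1$ is needed). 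Thus there exist $\theta_1\in\mathbb{Z}[s_1^{\pm 1},\dots,s_d^{\pm 1}]$ and $\lambda\in M(n_1,\infty,\dots,\infty;s_1,\dots,s_d)$ with $\psi=\theta_1\phi_1+\lambda$. For the inductive step from $c-1$ to $c$, I would first apply the case $c-1$ to write $\psi=\sum_{i=1}^{c-1}\theta_i\phi_i+\mu$ with $\mu\in M(n_1,\dots,n_{c-1},\infty,\dots,\infty;s_1,\dots,s_d)$, then apply the $c=1$ argument with the variable $s_c$ and the polynomial $\phi_c$ to $\mu$, writing $\mu=\theta_c\phi_c+\lambda$. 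I must check that this last step preserves membership in $M(n_1,\dots,n_{c-1},\cdot,\infty,\dots,\infty)$ in the first $c-1$ slots: since $\phi_c\in\mathbb{Z}[s_c^{\pm 1}]$ involves only $s_c$, the division with respect to $s_c$ acts coefficient-wise on the other variables and does not change which powers of $s_1,\dots,s_{c-1}$ appear, so the $s_1,\dots,s_{c-1}$-support of $\lambda$ stays inside that of $\mu$. This gives $\lambda\in M(n_1,\dots,n_c,\infty,\dots,\infty;s_1,\dots,s_d)$.

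For uniqueness, it suffices to show that if $\sum_{i=1}^c\theta_i\phi_i=\lambda\in M(n_1,\dots,n_c,\infty,\dots,\infty;s_1,\dots,s_d)$ then $\lambda=0$. I would again induct on $c$. For $c=1$: suppose $\theta_1\phi_1=\lambda$ with $\lambda$ supported (in the $s_1$-exponent) in $(-\lceil n_1/2\rceil,\lfloor n_1/2\rfloor]$, an interval of length exactly $n_1$. If $\theta_1\neq 0$, pick any monomial $s_2^{m_2}\cdots s_d^{m_d}$ occurring in $\theta_1$ with a nonzero coefficient in $R_1$; examining the coefficient of that monomial tuple in the product, the $s_1$-degree spread of $\theta_1\phi_1$ restricted to it is at least $n_1+(\text{degree of the }s_1\text{-part of }\theta_1)\ge n_1$, and because the leading and trailing coefficients of $\phi_1$ are $\pm 1$ there is no cancellation at the two extreme $s_1$-exponents — so the $s_1$-support has length $>n_1-1$, i.e. it cannot fit in an interval of $n_1$ integers unless it is empty. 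Hence $\theta_1=0$ and $\lambda=0$. For the inductive step, from $\sum_{i=1}^c\theta_i\phi_i=\lambda$ I would view everything modulo the ideal generated by $\phi_1,\dots,\phi_{c-1}$; more concretely, apply the (already-established) uniqueness for $c-1$ to the relation $\theta_c\phi_c+\sum_{i<c}\theta_i\phi_i=\lambda$ after further reducing $\theta_c\phi_c$ by $\phi_1,\dots,\phi_{c-1}$, or — cleaner — argue directly: reduce the identity $\sum\theta_i\phi_i=\lambda$ with respect to $s_c$ using $\phi_c$; since $\lambda$ already lies in $M(\dots,n_c,\dots)$ its $s_c$-reduction is itself, while $\theta_c\phi_c$ reduces to $0$ and each $\theta_i\phi_i$ for $i<c$ reduces to some $\theta_i'\phi_i$ (as $\phi_i$ does not involve $s_c$). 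This yields $\sum_{i<c}\theta_i'\phi_i=\lambda$, and the $s_c$-reduction did not enlarge the $s_1,\dots,s_{c-1}$-support, so $\lambda\in M(n_1,\dots,n_{c-1},\infty,\dots,\infty;s_1,\dots,s_d)$; the induction hypothesis forces $\lambda=0$.

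The main obstacle I anticipate is bookkeeping the interaction of the reductions in different variables — specifically, verifying that dividing by $\phi_i$ (which only involves $s_i$) never increases the exponent-support in the other variables, and that the ``monic'' hypothesis genuinely lets Lemma \ref{div1}'s algorithm run over the non-field coefficient ring $R_i=\mathbb{Z}[s_j^{\pm 1}:j\neq i]$ without ever needing to invert an element of $R_i$. Both are true because the extreme coefficients of $\phi_i$ are units in $\mathbb{Z}$; once this is made precise the rest is a routine induction. A secondary subtlety is the exact width of the window $M(n;s)$: the interval $(-\lceil n/2\rceil,\lfloor n/2\rfloor]$ contains exactly $n$ integers, which is precisely the number of residues one expects after division by a degree-$n$ polynomial, so the dimension count in the uniqueness argument is tight and must be handled with the correct floor/ceiling conventions already fixed in Lemma \ref{div1}.
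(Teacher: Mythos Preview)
Your proposal is correct. The existence argument is essentially the paper's: both induct on $c$, first reducing by $\phi_1,\dots,\phi_{c-1}$ and then by $\phi_c$, using that $\phi_c\in\mathbb{Z}[s_c^{\pm 1}]$ so the last reduction does not disturb the $s_1,\dots,s_{c-1}$-support. Your observation that Lemma~\ref{div1} works over any coefficient ring because the extreme coefficients of $\phi_i$ are units is exactly what the paper uses implicitly when it reduces to monomials.

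For uniqueness, however, you take a genuinely different route. The paper does \emph{not} induct on $c$: it fixes a nonzero $\psi=\sum_i\theta_i\phi_i\in M(n_1,\dots,n_c,\infty,\dots,\infty)$, chooses a representation $\Theta=(\theta_i)$ with $\sigma(\Theta):=\min\{i:\theta_i\neq 0\}$ maximal and then with the $s_j$-degree spread $\tau(\Theta)$ of $\theta_j$ (where $j=\sigma(\Theta)$) minimal, and produces a contradiction by examining the extreme $s_j$-degree term and rewriting to decrease $\tau$. Your argument instead reduces modulo $\phi_c$: writing each $\theta_i=\eta_i\phi_c+\theta_i'$ for $i<c$ with $\theta_i'$ in the $s_c$-window, you get $\lambda-\sum_{i<c}\theta_i'\phi_i$ lying both in the $s_c$-window and in $(\phi_c)$, hence zero by the single-variable case; then $\lambda=\sum_{i<c}\theta_i'\phi_i$ and the inductive hypothesis applies (note $\lambda$ itself is unchanged, so the inclusion $M(n_1,\dots,n_c,\dots)\subset M(n_1,\dots,n_{c-1},\infty,\dots)$ is all you need there---your remark about the reduction not enlarging the $s_1,\dots,s_{c-1}$-support is correct but unnecessary at that point). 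Your approach is the standard iterated-division argument and is arguably cleaner; the paper's extremal argument avoids setting up the induction but is more intricate.
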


\begin{proof}
First we prove the existence by the induction on $c$. By the induction hypothesis, there exists $\theta_i\in\mathbb{Z}[s_1^{\pm 1},...,s_d^{\pm 1}]$ such that
\begin{equation*}
    \psi-\sum_{i=1}^{c-1}\theta_i\phi_i\in M(n_1,...,n_{c-1},\infty,...,\infty;s_1,...,s_d).
\end{equation*}
Thus we may assume $\psi\in M(n_1,...,n_{c-1},\infty,...,\infty;s_1,...,s_d)$, and further $\psi=s_1^{m_1}...s_{c-1}^{m_{c-1}}\psi'$ with $m_i\in(-\lceil n_i/2\rceil,\lfloor n_i/2 \rfloor]$ and $\psi'\in\mathbb{Z}[s_c^{\pm 1},...,s_d^{\pm 1}]$ since every element of $M(n_1,...,n_{c-1},\infty,...,\infty;s_1,...,s_d)$ is a linear combination with coefficients in $\mathbb{Z}$ of polynomials of this form. Then $\psi'$ is represented as $\psi'=\sum_{k}s_c^k\psi_k'$
for some $\psi_k'\in\mathbb{Z}[s_{c+1}^{\pm 1},...,s_d^{\pm 1}]$. Since $M(n_1,...,n_c,\infty,...,\infty;s_1,...,s_d)$ is closed under multiplying elements of $\mathbb{Z}[s_{c+1}^{\pm 1},...,s_d^{\pm 1}]$, we may assume $\psi'=s_c^k$ with $k\in\mathbb{Z}$. Then by Lemma \ref{div1}, there exists $\theta\in\mathbb{Z}[s_c^{\pm 1}]$ such that $\psi'-\theta \phi_c\in M(n_c;s_c)$. Then we have
\begin{equation*}
    \psi-s_1^{m_1}\cdots s_{c-1}^{m_{c-1}}\theta\cdot\phi_c=s_1^{m_1}\cdots s_{c-1}^{m_{c-1}}(\psi'-\theta \phi_c)\in M(n_1,...,n_c,\infty,...,\infty;s_1,...,s_d).
\end{equation*}
The existence is proved.\par
Now we prove the uniqueness. It suffices to show that if $\psi=\sum_{i=1}^c\theta_i\phi_i\in M(n_1,...,n_c,\infty,...,\infty;s_1,...,s_d)$ with $\theta_i\in\mathbb{Z}[s_1^{\pm 1},...,s_d^{\pm 1}]$, then $\psi=0$. We call such $\Theta:=(\theta_i)_{i=1}^c$ a representation of $\psi$. Suppose $\psi\neq 0$. Let $\sigma(\Theta)$ be the smallest integer $i\in [1,c]$ such that $\theta_i\neq 0$. We may assume that $\sigma(\Theta)$ is the largest among all the representations of $\psi$. Set $j=\sigma(\Theta)$. Let $\tau(\Theta)\geq 0$ be the difference between the largest and the smallest degrees of $s_j$ in $\theta_j$. We may assume that $\tau(\Theta)$ is the smallest among all the representations of $\psi$ satisfying $\sigma(\Theta)=j$. Let $l$ and $m$ be the smallest degrees of $s_j$ in $\theta_j$ and $\phi_j$, respectively. For $j\leq i\leq c$, set $\theta_i=\sum_ks_j^{k}\mu_{i,k}$ with $\mu_{i,k}\in\mathbb{Z}[s_1^{\pm 1},...,s_{j-1}^{\pm 1},s_{j+1}^{\pm 1},...,s_d^{\pm 1}]$. Note that $\theta_j=\sum_{k=m}^{m+\tau(\Theta)}s_j^k\mu_{j,k}$. Also set $\phi_j=\sum_{k=l}^{l+n_j}a_ks_j^k$ with $a_k\in\mathbb{Z}$.\par 
Toward deducing a contradiction, we first assume $m+l\leq -\lceil n_j/2\rceil$. Then the sum of the terms of $\theta_j\phi_j$ whose degree of $s_j$ is $m+l$ is $a_ls_j^{m+l}\mu_{j,m}$. For $i>j$, the sum of the terms of $\theta_i\phi_i$ whose degree of $s_j$ is $m+l$ is $s_j^{m+l}\mu_{i,m+l}\phi_i$. Since $\psi=\sum_{i=j}^c\theta_i\phi_i\in M(n_1,...,n_c,\infty,...,\infty;s_1,...,s_d)$, the sum of the terms of $\psi$ whose degree of $s_j$ is $m+l$ must be $0$. Thus we have
\begin{equation*}
    a_l\mu_{j,m}+\sum_{i=j+1}^c\mu_{i,m+l}\phi_i=0.
\end{equation*}
Note that $a_l\in\{\pm 1\}$ since $\phi_j$ is monic. Hence we have
\begin{align*}
    \psi&=\psi-(\mu_{j,m}+a_l^{-1}\sum_{i=j+1}^c\mu_{i,m+l}\phi_i)s_j^m\phi_j\\
    &=(\theta_j-s_j^m\mu_{j,m})\phi_j+\sum_{i=j+1}^c (\theta_i-a_l^{-1}s_j^m\mu_{i,m+l}\phi_j)\phi_i.
\end{align*}
This gives a new representation $\Theta'=(\theta_i')_{i=1}^c$ of $\psi$ such that $\theta_i'=0$ if $i<j$, and $\theta_j'=\theta_j-s_j^m\mu_{j,m}=\sum_{k=m+1}^{m+\tau(\Theta)}s_j^{k}\mu_{i,k}$. Since $\sigma(\Theta)=j$ is the largest, we have $\sigma(\Theta')=j$. Then, however, $\tau(\Theta')<\tau(\Theta)$ holds, which contradicts the assumption of $\tau(\Theta)$ being the smallest.\par
We next assume $m+l> -\lceil n_j/2\rceil.$ The argument is analogous. The largest degrees of $s_j$ in $\theta_j$ and $\phi_j$ are $m':=m+\tau(\Theta)$ and $l':=l+n_j$, respectively. Then 
\begin{equation*}
    m'+l'\geq m+l+n_j>-\lceil n_j/2\rceil+n_j=\lfloor n_j/2 \rfloor
\end{equation*}
holds.
Since $\psi\in M(n_1,...,n_c,\infty,...,\infty;s_1,...,s_d)$, the sum of the terms of $\psi$ whose degree of $s_j$ is $m'+l'$ must be $0$. Hence we have
\begin{equation*}
    a_{l'}\mu_{j,m'}+\sum_{i=j+1}^c\mu_{i,m'+l'}\phi_i=0
\end{equation*}
and
\begin{align*}
    \psi=(\theta_j-s_j^{m'}\mu_{j,m'})\phi_j+\sum_{i=j+1}^c (\theta_i-a_{l'}^{-1}s_j^{m'}\mu_{i,m'+l'}\phi_j)\phi_i
\end{align*}
in the same way as above. This gives a new representation $\Theta'=(\theta_i')_{i=1}^c$ of $\psi$ such that $\theta_i'=0$ if $i<j$, and $\theta_j'=\theta_j-s_j^{m'}\mu_{j,m'}=\sum_{k=m}^{m+\tau(\Theta)-1}s_j^{k}\mu_{i,k}$. Again we have $\sigma(\Theta')=j$ and $\tau(\Theta')<\tau(\Theta)$, which is a contradiction.
\end{proof}

\section{The commutator subgroup of free metabelian groups}\label{comms}

Let $G$ be a group and $N\trianglelefteq G$ a normal abelian subgroup. Set $Q=G/N$. 

\begin{nta}\label{comm19}
Let $g\in G\mapsto \overline{g}\in Q$ be the quotient map. We use additive notation for the group operation in $N$. The right action $N\curvearrowleft Q$ is induced from the conjugation by $G$, and it is denoted by $f^{\overline{g}}:=g^{-1}fg$ for $f\in N$ and $g\in G$. We regard $N$ as a right $\mathbb{Z}[Q]$-module with respect to this action, and the multiplication of $f\in N$ by $\phi\in\mathbb{Z}[Q]$ is denoted by $f^\phi$. That is, if $\phi=\sum_{q\in Q}n_q\cdot q\in \mathbb{Z}[Q]$ with $n_q\in\mathbb{Z}$, then $f^\phi=\sum_{q\in Q}n_q f^q$ holds.
\end{nta}

\begin{rmk}
We keep using multiplicative notation for the group operation in $Q$ even when $Q$ is abelian. Then for $q_1,q_2\in Q$, the elements $q_1+q_2$ and $q_1q_2$ of $\mathbb{Z}[Q]$ are distinguished. For $f\in N$, note that $f^{q_1+q_2}=g_1^{-1}fg_1+g_2^{-1}fg_2$ and $f^{q_1q_2}=g_2^{-1}g_1^{-1}fg_1g_2$, where $g_1,g_2\in G$ with $\overline{g_1}=q_1$ and $\overline{g_2}=q_2$.
\end{rmk}

\begin{lem}
For all $g,h,k\in G$, we have
\begin{equation}
    [g,hk]=[g,k]+[g,h]^{\overline{k}}. \label{pre6}
\end{equation}
Also for all $g,h\in G$ and $n\in\mathbb{N}$, we have
\begin{equation}
    [g,h^n]=[g,h]^{1+\overline{h}+\cdots+\overline{h}^{n-1}}.\label{comm15}
\end{equation}
\end{lem}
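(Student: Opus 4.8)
I would deduce (\ref{pre6}) from the universal commutator identity valid in any group, and then bootstrap (\ref{comm15}) from it by induction on $n$. Recall first that in an arbitrary group $[g,hk]=[g,k]\,[g,h]^{k}$, where $x^{y}=y^{-1}xy$; this is a one-line verification, comparing $[g,hk]=g^{-1}k^{-1}h^{-1}ghk$ with $[g,k][g,h]^{k}=(g^{-1}k^{-1}gk)(k^{-1}g^{-1}h^{-1}ghk)$ and cancelling $kk^{-1}$ and $gg^{-1}$. Now, in the situation of interest the commutators $[g,h]$, $[g,k]$, $[g,hk]$ lie in $N$ (this is automatic once $Q$ is abelian, which is the only case used). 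Since $N$ is abelian and normal, conjugation of an element of $N$ by $k\in G$ depends only on the coset $\overline{k}\in Q$: writing $k=k'f$ with $f\in N$, for $x\in N$ we get $k^{-1}xk=f^{-1}x^{\overline{k}}f=x^{\overline{k}}$ because $x^{\overline{k}}\in N$ and $N$ is abelian. Hence $[g,h]^{k}=[g,h]^{\overline{k}}$, and rewriting the group law of $N$ additively turns $[g,hk]=[g,k]\,[g,h]^{k}$ into precisely (\ref{pre6}).

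For (\ref{comm15}) I would induct on $n$. The case $n=1$ is trivial, both sides being $[g,h]$. For the inductive step, write $h^{n}=h^{n-1}\cdot h$ and apply (\ref{pre6}) with $k=h$, obtaining $[g,h^{n}]=[g,h]+[g,h^{n-1}]^{\overline{h}}$. By the inductive hypothesis $[g,h^{n-1}]=[g,h]^{1+\overline{h}+\cdots+\overline{h}^{n-2}}$, so by the $\mathbb{Z}[Q]$-module structure $[g,h^{n-1}]^{\overline{h}}=[g,h]^{\overline{h}+\overline{h}^{2}+\cdots+\overline{h}^{n-1}}$; adding $[g,h]=[g,h]^{1}$ and using $f^{\phi}+f^{\psi}=f^{\phi+\psi}$ gives $[g,h^{n}]=[g,h]^{1+\overline{h}+\cdots+\overline{h}^{n-1}}$, as required.

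There is no genuine obstacle here; the statement is essentially a bookkeeping identity. The only points that need a moment's care are (i) fixing the commutator and action conventions so that the classical identity appears in the form $[g,hk]=[g,k][g,h]^{k}$ rather than one of its variants, and (ii) the observation that conjugation on the abelian normal subgroup $N$ factors through $Q$, which is exactly what licenses replacing $[g,h]^{k}$ by $[g,h]^{\overline{k}}$ and passing to $\mathbb{Z}[Q]$-module notation. Once these are in place, the geometric-series exponent in (\ref{comm15}) falls out of the induction automatically.
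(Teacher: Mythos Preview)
Your proof is correct and follows essentially the same approach as the paper: verify the universal identity $[g,hk]=[g,k]\,k^{-1}[g,h]k$ by direct expansion, translate it into additive notation on $N$, and then iterate it to obtain the geometric-series exponent in (\ref{comm15}). The only cosmetic differences are that the paper decomposes $h^{n}=h\cdot h^{n-1}$ and telescopes with an ellipsis rather than writing out a formal induction, and that you make explicit the point (tacit in the paper's Notation~\ref{comm19}) that conjugation on $N$ factors through $Q$ and that the identity is only meaningful when the commutators land in $N$.
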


\begin{proof}
In general
\begin{equation*}
    [g,hk]=g^{-1}k^{-1}h^{-1}ghk=g^{-1}k^{-1}gk\cdot k^{-1}g^{-1}h^{-1}ghk=[g,k]\cdot k^{-1}[g,h]k
\end{equation*}
holds, and we have equation (\ref{pre6}). Then we have
\begin{align*}
    [g,h^n]&=[g,h^{n-1}]+[g,h]^{\overline{h}^{n-1}}\\
    &=[g,h^{n-2}]+[g,h]^{\overline{h}^{n-2}}+[g,h]^{\overline{h}^{n-1}}\\
    &=\cdots\\
    &=[g,h]^{1+\overline{h}+\cdots+\overline{h}^{n-1}}.
\end{align*}
\end{proof}

Let $G=F_d/F_d''$ with $d\geq 2$ and $N=G'=F_d'/F_d''$ in the rest of this section. Let $\{a_1,...,a_d\}$ be a free generator of $G$ and set $s_i=\overline{a_i}\in Q$ for $1\leq i\leq d$. Then $Q$ is the free abelian group over the basis $\Sigma_Q:=\{s_1,...,s_d\}$, and the group ring $\mathbb{Z}[Q]$ is identified with $\mathbb{Z}[s_1^{\pm 1},...,s_d^{\pm 1}]$. Since $N$ is the commutator subgroup of $G$, it is generated by the set 
\begin{equation*}
    X:=\{[a_i,a_j]\mid 1\leq i<j\leq d\}
\end{equation*}
as a $\mathbb{Z}[Q]$-module.

\begin{thm}[{\cite[Theorem 3]{B65}}]\label{comm1}
The $\mathbb{Z}[Q]$-module $N$ is naturally isomorphic to $(\bigoplus_{X}\mathbb{Z}[Q])/\mathfrak{I}$, where $\mathfrak{I}$ is the $\mathbb{Z}[Q]$-submodule of $\bigoplus_{X}\mathbb{Z}[Q]$ generated by the subset
\begin{equation*}
   \{[a_i,a_j]^{1-s_k}+[a_i,a_k]^{s_j-1}+[a_j,a_k]^{1-s_i}\mid 1\leq i<j<k\leq d \}.
\end{equation*}
In particular if $d=2$, then $\mathfrak{I}=0$ and $N$ is isomorphic to $\mathbb{Z}[Q]$.
\end{thm}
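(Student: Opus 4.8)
The plan is to realize $N$ inside a free $\mathbb{Z}[Q]$-module via Fox calculus (this is essentially the Magnus embedding) and then recognize the resulting presentation as part of a Koszul complex. Let $\partial_1,\ldots,\partial_d\colon\mathbb{Z}[F_d]\to\mathbb{Z}[F_d]$ be the (right) Fox derivatives, determined by $\partial_k(a_l)=\delta_{kl}$ and the cocycle rule $\partial_k(uv)=\partial_k(v)+\partial_k(u)\cdot v$, and let $D_k\colon F_d\to\mathbb{Z}[Q]$ be $\partial_k$ followed by the ring homomorphism $\mathbb{Z}[F_d]\to\mathbb{Z}[Q]$ induced by $g\mapsto\overline{g}$. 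A direct computation with the cocycle rule shows that the restriction of $D_k$ to $F_d'$ is a group homomorphism into $(\mathbb{Z}[Q],+)$, that it kills $F_d''=[F_d',F_d']$, and that $D_k(g^{-1}fg)=D_k(f)\overline{g}$ for all $f\in F_d'$ and $g\in F_d$. Hence $(D_1,\ldots,D_d)$ descends to a homomorphism of right $\mathbb{Z}[Q]$-modules $\rho\colon N\to\bigoplus_{i=1}^d\mathbb{Z}[Q]e_i$, and one computes $\rho([a_i,a_j])=(s_j-1)e_i-(s_i-1)e_j$ for $i<j$. (Magnus's theorem asserts moreover that $\rho$ is injective with image the kernel of $(m_k)_k\mapsto\sum_k(s_k-1)m_k$, but injectivity will not actually be needed.)

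Next I would compare this with the Koszul complex $K_\bullet=K_\bullet(s_1-1,\ldots,s_d-1;\mathbb{Z}[Q])$, where $K_p=\Lambda^p(\bigoplus_{i=1}^d\mathbb{Z}[Q]e_i)$ and the differentials are $d_1(e_i)=s_i-1$, $d_2(e_i\wedge e_j)=(s_i-1)e_j-(s_j-1)e_i$, and $d_3(e_i\wedge e_j\wedge e_k)=(s_i-1)e_j\wedge e_k-(s_j-1)e_i\wedge e_k+(s_k-1)e_i\wedge e_j$. The sequence $s_1-1,\ldots,s_d-1$ is $\mathbb{Z}[Q]$-regular: $\mathbb{Z}[Q]=\mathbb{Z}[s_1^{\pm1},\ldots,s_d^{\pm1}]$ is a domain, each successive quotient $\mathbb{Z}[Q]/(s_1-1,\ldots,s_k-1)\cong\mathbb{Z}[s_{k+1}^{\pm1},\ldots,s_d^{\pm1}]$ is again a domain in which $s_{k+1}-1$ is a nonzero non-unit, and the final quotient $\mathbb{Z}$ is nonzero. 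Therefore $K_\bullet$ is acyclic in positive degrees; in particular $\ker d_2=\operatorname{im}d_3$.

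Now let $\Phi\colon\bigoplus_X\mathbb{Z}[Q]\to N$ be the $\mathbb{Z}[Q]$-module homomorphism sending the free generator indexed by $[a_i,a_j]$ to $[a_i,a_j]\in N$; it is surjective because $X$ generates $N$. Matching that generator with $e_i\wedge e_j$ identifies $\bigoplus_X\mathbb{Z}[Q]$ with $K_2$, and under this identification $\rho\circ\Phi=-d_2$, so $\ker(\rho\circ\Phi)=\ker d_2=\operatorname{im}d_3$. A direct substitution shows that $-d_3(e_i\wedge e_j\wedge e_k)$ corresponds precisely to $[a_i,a_j]^{1-s_k}+[a_i,a_k]^{s_j-1}+[a_j,a_k]^{1-s_i}$, so $\operatorname{im}d_3$ corresponds exactly to $\mathfrak{I}$; thus $\ker(\rho\circ\Phi)=\mathfrak{I}$. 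Since $\mathfrak{I}\subseteq\ker\Phi\subseteq\ker(\rho\circ\Phi)$, with the first inclusion being the assertion that these Jacobi-type words vanish in $F_d'/F_d''$---which follows from the Hall--Witt identity reduced modulo $F_d''$ and can be checked directly using identity (\ref{pre6})---we conclude $\ker\Phi=\mathfrak{I}$, that is, $N\cong(\bigoplus_X\mathbb{Z}[Q])/\mathfrak{I}$. (Alternatively, granting that $\rho$ is injective, $\ker\Phi=\ker(\rho\circ\Phi)=\mathfrak{I}$ at once.) When $d=2$ we have $K_3=0$, so $\mathfrak{I}=0$ and $N\cong K_2=\mathbb{Z}[Q](e_1\wedge e_2)\cong\mathbb{Z}[Q]$, free of rank one on $[a_1,a_2]$.

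The main obstacle is the one genuinely nontrivial ingredient feeding this argument: either one cites the classical injectivity of the Magnus embedding $\rho$, or, to keep the proof self-contained, one must verify by hand that the Jacobi relations hold in $F_d'/F_d''$ (a bookkeeping exercise with commutator identities, using (\ref{pre6}) and the formula $[g,h]=h^{1-\overline{g}}$ for $h\in N$) together with the Koszul acyclicity above (standard commutative algebra, but this is the structural heart). Everything else---the Fox-derivative computations and the identification of $\operatorname{im}d_3$ with $\mathfrak{I}$---is routine once the sign and ordering conventions are pinned down.
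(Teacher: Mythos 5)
The paper does not prove Theorem~\ref{comm1} at all---it is stated verbatim as a citation of \cite[Theorem 3]{B65}, so there is nothing in the paper's text to compare against. What you have written is therefore an independent, self-contained proof, and it is correct.

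The strategy---embed $N=F_d'/F_d''$ into the free module $\bigoplus_i\mathbb{Z}[Q]e_i$ via Fox derivatives, identify the image of the generating set $X$ with the degree-two Koszul differential of the regular sequence $(s_1-1,\ldots,s_d-1)$, and then invoke exactness of the Koszul complex to compute $\ker d_2=\operatorname{im}d_3$---is sound and in the same circle of ideas as Bachmuth's treatment via the Magnus embedding. Two things you handle well deserve emphasis. First, the sandwich $\mathfrak{I}\subseteq\ker\Phi\subseteq\ker(\rho\circ\Phi)=\operatorname{im}d_3=\mathfrak{I}$ deliberately avoids invoking injectivity of the Magnus embedding, at the cost of having to check separately that the Jacobi relators vanish in $N$; you correctly flag that this must be done with elementary commutator identities such as \eqref{pre6} and the formula $[g,h]=h^{1-\overline g}$ for $h\in N$, not by citing Corollary~\ref{comm14} (which is downstream of the very theorem being proved). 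Second, the regularity of $(s_1-1,\ldots,s_d-1)$ in $\mathbb{Z}[s_1^{\pm1},\ldots,s_d^{\pm1}]$---each successive quotient is again a Laurent polynomial domain in the remaining variables, in which the next $s_i-1$ is a nonzerodivisor---is exactly the point that makes the Koszul complex acyclic in positive degrees, and you state it correctly. The sign bookkeeping $\rho\circ\Phi=-d_2$ and the match between $-d_3(e_i\wedge e_j\wedge e_k)$ and $[a_i,a_j]^{1-s_k}+[a_i,a_k]^{s_j-1}+[a_j,a_k]^{1-s_i}$ both check out with the conventions $d_p(e_{i_1}\wedge\cdots\wedge e_{i_p})=\sum_t(-1)^{t-1}(s_{i_t}-1)e_{i_1}\wedge\cdots\widehat{e_{i_t}}\cdots\wedge e_{i_p}$ and $\rho([a_i,a_j])=(s_j-1)e_i-(s_i-1)e_j$. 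The $d=2$ specialization ($K_3=0$, hence $\mathfrak I=0$ and $N\cong\mathbb{Z}[Q]$) follows immediately. This is a clean and complete argument for the cited theorem.
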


\begin{cor} \label{comm14}
For all $1\leq i<j<k\leq d$, we have
\begin{equation*}
    [a_i,a_j]^{1-s_k}+[a_i,a_k]^{s_j-1}+[a_j,a_k]^{1-s_i}=0\quad\text{in } N.
\end{equation*}
\end{cor}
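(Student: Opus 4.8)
The plan is to read the identity off directly from Theorem~\ref{comm1}. Bachmuth's isomorphism $N\cong(\bigoplus_X\mathbb{Z}[Q])/\mathfrak{I}$ is \emph{natural}, meaning the free generator of $\bigoplus_X\mathbb{Z}[Q]$ indexed by $[a_i,a_j]\in X$ is carried to the commutator $[a_i,a_j]\in N$. Under this identification the element $[a_i,a_j]^{1-s_k}+[a_i,a_k]^{s_j-1}+[a_j,a_k]^{1-s_i}$ (for $i<j<k$) is literally one of the prescribed $\mathbb{Z}[Q]$-module generators of $\mathfrak{I}$, hence it is $0$ in the quotient, i.e.\ in $N$. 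There is essentially no obstacle here; the ``proof'' is just to unwind the word ``naturally'' in the statement of Theorem~\ref{comm1}.

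If one prefers an argument that does not quote \cite{B65}, I would instead obtain the relation directly from the commutator calculus, by computing $[a_i,a_j a_k]$ in two ways. First, equation~(\ref{pre6}) gives $[a_i,a_j a_k]=[a_i,a_k]+[a_i,a_j]^{s_k}$. Second, using the elementary identity $a_j a_k=a_k a_j[a_j,a_k]$ and applying~(\ref{pre6}) to the factorization $(a_k a_j)\cdot[a_j,a_k]$, whose second factor lies in $N$ and so projects to $1\in Q$, one gets $[a_i,a_j a_k]=[a_i,[a_j,a_k]]+[a_i,a_k a_j]=[a_i,[a_j,a_k]]+[a_i,a_j]+[a_i,a_k]^{s_j}$. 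For $f\in N$ one has $[a_i,f]=a_i^{-1}f^{-1}a_i+f=f-f^{s_i}=f^{1-s_i}$, so the middle term equals $[a_j,a_k]^{1-s_i}$. Equating the two expressions for $[a_i,a_j a_k]$ and collecting terms yields $[a_i,a_j]^{s_k-1}+[a_i,a_k]^{1-s_j}+[a_j,a_k]^{s_i-1}=0$, which is the claimed identity after multiplying through by $-1$ (as a scalar in $\mathbb{Z}[Q]$).

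The only point that needs care in this second route is the bookkeeping of the $\mathbb{Z}[Q]$-action and the signs: one must keep the additive notation on $N$ consistent, remember that conjugation by $a_i$ is multiplication by $s_i$, and evaluate $[a_i,f]$ correctly for $f$ in the abelian group $N$. Beyond that it is a mechanical manipulation using only~(\ref{pre6}) and $a_j a_k=a_k a_j[a_j,a_k]$. In the paper I would give the one-line argument via Theorem~\ref{comm1}, since that theorem has just been stated, and leave the direct computation as an optional remark.
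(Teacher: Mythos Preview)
Your primary approach is exactly right and matches the paper: Corollary~\ref{comm14} is stated there without proof as an immediate consequence of Theorem~\ref{comm1}, since the displayed element is one of the generators of $\mathfrak{I}$ and hence vanishes in the quotient $N\cong(\bigoplus_X\mathbb{Z}[Q])/\mathfrak{I}$. Your alternative direct commutator computation via $[a_i,a_ja_k]$ is also correct and gives a self-contained derivation that does not rely on quoting Bachmuth's theorem.
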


\begin{lem}
For all $1\leq i<j<k\leq d$ and $n\in\mathbb{N}$, we have
\begin{align}
    [a_i,a_j]^{s_k^{n}}=[a_i,a_j]+&[a_i,a_k]^{(s_j-1)(1+s_k+...+s_k^{n-1})}+
    [a_j,a_k]^{(1-s_i)(1+s_k+...+s_k^{n-1})}\label{comm16}
\end{align}
and
\begin{align}
    [a_i,a_j]^{s_k^{-n}}=[a_i,a_j]+&[a_i,a_k]^{(1-s_j)(s_k^{-1}+s_k^{-2}+...+s_k^{-n})}+
    [a_j,a_k]^{(s_i-1)(s_k^{-1}+s_k^{-2}+...+s_k^{-n})}\label{comm17}
\end{align}
in $N$.
\end{lem}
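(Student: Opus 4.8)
The plan is to deduce both identities from Corollary \ref{comm14}, reading the multiplicative commutators inside $N$ as elements of the $\mathbb{Z}[Q]$-module $N$ and then summing a telescoping series. Write $f=[a_i,a_j]\in N$. Since $N$ is abelian and $f\in N$, in additive notation one has $[f,a_k^{\pm 1}]=f^{s_k^{\pm 1}}-f=f^{s_k^{\pm 1}-1}$, and more generally $[f,a_k^{\pm n}]=f^{s_k^{\pm n}}-f$ (this uses $[f,g]=f^{-1}(g^{-1}fg)$ together with $g^{-1}fg\in N$). So (\ref{comm16}) and (\ref{comm17}) are equivalent to formulas for $[f,a_k^n]$ and $[f,a_k^{-n}]$, respectively.

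First I would settle the case $n=1$. Rearranging the identity of Corollary \ref{comm14} gives
\[
f^{s_k-1}=[a_i,a_j]^{s_k}-[a_i,a_j]=[a_i,a_k]^{s_j-1}+[a_j,a_k]^{1-s_i},
\]
which is (\ref{comm16}) for $n=1$; multiplying the identity of Corollary \ref{comm14} through by $s_k^{-1}\in\mathbb{Z}[Q]$ and rearranging gives
\[
f^{s_k^{-1}-1}=[a_i,a_k]^{(1-s_j)s_k^{-1}}+[a_j,a_k]^{(s_i-1)s_k^{-1}},
\]
which is (\ref{comm17}) for $n=1$.

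For general $n$ I would invoke equation (\ref{comm15}). Applying it with $g=f$ and $h=a_k$ yields $[f,a_k^n]=[f,a_k]^{1+s_k+\cdots+s_k^{n-1}}$, and applying it with $h=a_k^{-1}$ yields $[f,a_k^{-n}]=[f,a_k^{-1}]^{1+s_k^{-1}+\cdots+s_k^{-(n-1)}}$. Substituting the two base-case expressions for $[f,a_k]=f^{s_k-1}$ and $[f,a_k^{-1}]=f^{s_k^{-1}-1}$, distributing the polynomials $1+s_k+\cdots+s_k^{n-1}$ and $1+s_k^{-1}+\cdots+s_k^{-(n-1)}$ over the respective sums (noting $s_k^{-1}(1+s_k^{-1}+\cdots+s_k^{-(n-1)})=s_k^{-1}+\cdots+s_k^{-n}$), and then adding $[a_i,a_j]=f$ to both sides, produces exactly (\ref{comm16}) and (\ref{comm17}). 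Alternatively one can bypass (\ref{comm15}) entirely and telescope by hand: since $\mathbb{Z}[Q]$ is commutative, $f^{s_k^l}-f^{s_k^{l-1}}=(f^{s_k-1})^{s_k^{l-1}}$, hence $f^{s_k^n}-f=\sum_{l=1}^{n}(f^{s_k-1})^{s_k^{l-1}}=(f^{s_k-1})^{1+s_k+\cdots+s_k^{n-1}}$, and symmetrically for the negative exponents.

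This lemma is elementary, so there is no genuine obstacle; the only points demanding care are the sign bookkeeping when rearranging Corollary \ref{comm14} and the translation between the group commutator $[[a_i,a_j],a_k^{\pm n}]$ and the module element $f^{s_k^{\pm n}}-f$, which is legitimate precisely because $[a_i,a_j]$ lies in the abelian normal subgroup $N$.
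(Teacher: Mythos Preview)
Your proposal is correct and follows essentially the same route as the paper: both arguments rest on Corollary~\ref{comm14} together with the factorization $s_k^{n}-1=(s_k-1)(1+s_k+\cdots+s_k^{n-1})$ in $\mathbb{Z}[Q]$, which is precisely your ``telescoping by hand'' alternative. The only cosmetic difference is that the paper obtains (\ref{comm17}) in one stroke by multiplying the already-established identity (\ref{comm16}) through by $s_k^{-n}$, rather than redoing the base case and telescoping for negative powers as you do; both derivations are equally short.
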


\begin{proof}
By Corollary \ref{comm14}, we have
\begin{align*}
    [a_i,a_j]^{s_k^n-1}&=[a_i,a_j]^{(s_k-1)(1+s_k+...+s_k^{n-1})}\\
    &=[a_i,a_k]^{(s_j-1)(1+s_k+...+s_k^{n-1})}+
    [a_j,a_k]^{(1-s_i)(1+s_k+...+s_k^{n-1})}
\end{align*}
and obtain equation (\ref{comm16}). By multiplying $s_k^{-n}$ to equation (\ref{comm16}), we obtain equation (\ref{comm17}).
\end{proof}

For each $1\leq j\leq d$, let $Q^{(j)}$ be the subgroup of $Q$ generated by $\{s_i\}_{i=1}^j$. 

\begin{prop}\label{comm13}
For every $f\in N$, there exists a unique family $(\phi_{i,j})_{1\leq i<j\leq d}$ of elements of $\mathbb{Z}[Q]$ satisfying
\begin{enumerate}
    \item $\phi_{i,j}\in \mathbb{Z}[Q^{(j)}]$ for all $i$ and $j$, and
    \item $f=\sum_{1\leq i<j\leq d}[a_i,a_j]^{\phi_{i,j}}.$
\end{enumerate}
\end{prop}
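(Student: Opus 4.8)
## Proof proposal for Proposition \ref{comm13}

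The plan is to deduce both existence and uniqueness from the division result Proposition \ref{div2}, after first using the relations of Corollary \ref{comm14} (in the exponentiated form of equations (\ref{comm16}) and (\ref{comm17})) to ``normalize'' the index $k$ appearing in each generator $[a_i,a_j]$.

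First I would treat existence. By Theorem \ref{comm1}, every $f \in N$ is a $\mathbb{Z}[Q]$-linear combination $f = \sum_{1\le i<j\le d} [a_i,a_j]^{\psi_{i,j}}$ with $\psi_{i,j} \in \mathbb{Z}[Q]$, but the $\psi_{i,j}$ need not lie in $\mathbb{Z}[Q^{(j)}]$ — they may involve $s_k$ for $k>j$. The key observation is that equations (\ref{comm16}) and (\ref{comm17}) let me trade a factor of $s_k^{\pm n}$ (with $k > j$) attached to $[a_i,a_j]$ for terms involving $[a_i,a_k]$ and $[a_j,a_k]$ — but in those exchanged terms the second subscript has \emph{increased} to $k$, while the generator $[a_i,a_j]$ that remains has coefficient with one fewer occurrence of $s_k$, and the new generators $[a_i,a_k],[a_j,a_k]$ already have $k$ as their top index so they are allowed to see $s_k$. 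Concretely, I would process the generators in order of decreasing second index $j = d, d-1, \dots, 2$; for a fixed $j$, and for each $k > j$ (processed say from $k=d$ downward, or in any order since the $s_k$ for distinct $k>j$ don't interfere with this reduction step), I rewrite the $s_k$-part of the coefficient of $[a_i,a_j]$ using (\ref{comm16})/(\ref{comm17}) to push everything involving $s_k$ into the generators $[a_i,a_k]$ and $[a_j,a_k]$, whose coefficients are allowed to contain $s_k$. After finitely many such steps the coefficient of each $[a_i,a_j]$ lies in $\mathbb{Z}[Q^{(j)}] = \mathbb{Z}[s_1^{\pm1},\dots,s_j^{\pm1}]$. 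One has to check the bookkeeping terminates — it does, because at each step the total ``$s_k$-spread'' (difference of top and bottom $s_k$-degrees) in the offending coefficients strictly decreases, exactly as in the terminating argument already used in the proof of Proposition \ref{div2}.

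Next, uniqueness. It suffices to show that if $(\phi_{i,j})$ satisfies conditions (i)–(ii) with $f = 0$, then all $\phi_{i,j} = 0$. By Theorem \ref{comm1}, $\sum_{i<j}[a_i,a_j]^{\phi_{i,j}} = 0$ in $N$ means the corresponding element of $\bigoplus_X \mathbb{Z}[Q]$ lies in the submodule $\mathfrak{I}$, i.e. $(\phi_{i,j})_{i<j} = \sum_{i<j<k} \theta_{i,j,k}\cdot r_{i,j,k}$ where $r_{i,j,k}$ is the relator $[a_i,a_j]^{1-s_k} + [a_i,a_k]^{s_j-1} + [a_j,a_k]^{1-s_i}$ and $\theta_{i,j,k} \in \mathbb{Z}[Q]$. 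Looking at the component indexed by the pair $(i,j)$ with $j$ maximal among all pairs with $\phi_{i,j}\ne 0$: the relator $r_{p,q,m}$ contributes to the $(i,j)$-component only when $\{p,q\} = \{i,j\}$ and $m > j$ (contributing $\theta_{i,j,m}(1-s_m)$), or when $q = j$, $m$ arbitrary... actually the cleanest route is: fix the largest index $j_0$ such that $\phi_{i,j_0}\ne 0$ for some $i$; every relator touching the $(i,j_0)$-coordinate has the form $r_{i,j_0,m}$ with $m > j_0$ and contributes $\theta_{i,j_0,m}(1-s_m)$ (the other two coordinates of $r_{i,j_0,m}$ are $[a_i,a_m]$ and $[a_{j_0},a_m]$, which have top index $m>j_0$, hence are \emph{not} among the coordinates with maximal nonzero $\phi$). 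So $\phi_{i,j_0} = \sum_{m>j_0}\theta_{i,j_0,m}(1-s_m) \in \mathbb{Z}[Q^{(j_0)}]$, and since the right-hand side lies in the ideal generated by $\{1-s_m : m>j_0\}$ while $\mathbb{Z}[Q] / (1-s_m : m>j_0) \cong \mathbb{Z}[Q^{(j_0)}]$ faithfully, this forces $\phi_{i,j_0}=0$, a contradiction. Iterating downward on $j_0$ gives $\phi_{i,j}=0$ for all $i,j$. (An alternative, perhaps smoother, is to combine the reduction procedure from the existence part with the uniqueness clause of Proposition \ref{div2}, applied variable by variable; I would present whichever is shorter.)

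The main obstacle is making the existence reduction genuinely finite and orderly: the rewriting (\ref{comm16})/(\ref{comm17}) creates new terms in $[a_i,a_k]$ and $[a_j,a_k]$ whose coefficients can themselves fail condition (i) with respect to indices \emph{between} $j$ and $k$, so the processing order and the choice of termination measure must be set up carefully. Handling the top index $j=d$ as a base case (where $\mathbb{Z}[Q^{(d)}]=\mathbb{Z}[Q]$, so condition (i) is automatic) and then inducting downward on $j$, while at each stage invoking Proposition \ref{div2} to control what survives, is the structure I expect to carry the argument; the individual polynomial manipulations are routine.
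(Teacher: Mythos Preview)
Your existence argument is sound and is the same idea as the paper's: use equations (\ref{comm16}) and (\ref{comm17}) to trade a factor $s_k^{\pm 1}$ (with $k>j$) on $[a_i,a_j]$ for terms on generators with second index $k$. The paper packages this more cleanly: instead of describing an algorithm and a termination measure, it sets $\widetilde N=\sum_{i<j}[a_i,a_j]^{\mathbb{Z}[Q^{(j)}]}$, notes $X\subset\widetilde N$, and checks directly from (\ref{comm16})--(\ref{comm17}) that $\widetilde N$ is $Q$-invariant; since $N$ is the $\mathbb{Z}[Q]$-module generated by $X$, this gives $\widetilde N=N$ in one line. (Your aside ``in any order, since the $s_k$ for distinct $k>j$ don't interfere'' is not correct for the explicit reduction, but becomes moot under the invariance formulation.)

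Your uniqueness argument has a genuine gap. The claim that ``every relator touching the $(i,j_0)$-coordinate has the form $r_{i,j_0,m}$ with $m>j_0$'' is false: the relators $r_{p,i,j_0}$ for $p<i$ and $r_{i,q,j_0}$ for $i<q<j_0$ also hit that coordinate, contributing $(1-s_p)\theta_{p,i,j_0}$ and $(s_q-1)\theta_{i,q,j_0}$ respectively. These contributions already lie in $\mathbb{Z}[Q^{(j_0)}]$, so your intersection argument with the ideal $(1-s_m:m>j_0)$ cannot force $\phi_{i,j_0}=0$; choosing $j_0$ maximal among pairs does nothing to suppress them. The paper's fix is to work with the full \emph{lexicographic order on triples}: take $(i_0,j_0,k_0)$ to be the lex-minimal triple with $\theta_{i_0,j_0,k_0}\neq 0$, chosen so that this minimum is as large as possible over all representations of $f$ in $\mathfrak I$. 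Lex-minimality is exactly what kills the unwanted $\theta_{p,i_0,j_0}$ and $\theta_{i_0,q,j_0}$ in the $(i_0,j_0)$-coordinate, leaving $\sum_{l\ge k_0}(1-s_l)\phi_{i_0,j_0,l}\in\mathbb{Z}[Q^{(j_0)}]$, hence $=0$. Proposition~\ref{div2} then yields $\phi_{i_0,j_0,k_0}\in(1-s_l:l>k_0)$, and one needs a further ingredient you did not anticipate: the \emph{syzygy among the relators}
\[
\alpha_{i,j,k}^{\,1-s_l}+\alpha_{i,j,l}^{\,s_k-1}+\alpha_{i,k,l}^{\,1-s_j}+\alpha_{j,k,l}^{\,s_i-1}=0,
\]
which lets one rewrite $\alpha_{i_0,j_0,k_0}^{\ \phi_{i_0,j_0,k_0}}$ in terms of $\alpha$'s with strictly larger triple index, contradicting extremality. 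Your proposed alternative (reuse the existence reduction together with the uniqueness clause of Proposition~\ref{div2}) does not, as stated, supply either the correct extremal setup on triples or this syzygy step.
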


\begin{proof}
First, we prove the existence. Set 
\begin{equation*}
    \widetilde{N}=\sum_{1\leq i<j\leq d} [a_i,a_j]^{\mathbb{Z}[Q^{(j)}]}.
\end{equation*}
Clearly $X\subset \widetilde{N}$. Since $N$ is generated by $X$ as a $\mathbb{Z}[Q]$-module, it suffices to show that $\widetilde{N}$ is $Q$-invariant. Let $1\leq i<j\leq d$ and $\phi\in \mathbb{Z}[Q^{(j)}]$. We show $[a_i,a_j]^{\phi s_k^{\pm 1}}\in \widetilde{N}$ for every $1\leq k\leq d$. If $k\leq j$, then $\phi s_k^{\pm1}\in \mathbb{Z}[Q^{(j)}]$ and it is done. Suppose $k>j$. Then by equations (\ref{comm16}) and (\ref{comm17}), we have 
\begin{align*}
    &[a_i,a_j]^{\phi s_k}=[a_i,a_j]^\phi+[a_i,a_k]^{\phi(s_j-1)}+[a_j,a_k]^{\phi(1-s_i)}\quad\text{and}\\
    &[a_i,a_j]^{\phi s_k^{-1}}=[a_i,a_j]^\phi+[a_i,a_k]^{\phi (1-s_j)s_k^{-1}}+[a_j,a_k]^{\phi (s_i-1)s_k^{-1}}.
\end{align*}
By looking at the right-hand side, these are elements of $\Tilde{N}$ as desired.\par
Next we prove the uniqueness. By Theorem \ref{comm1}, it suffices to show that $\bigoplus_{1\leq i<j\leq d}[a_i,a_j]^{\mathbb{Z}[Q^{(j)}]}\cap\mathfrak{I}=0$. Suppose $0\neq f\in\bigoplus_{1\leq i<j\leq d}[a_i,a_j]^{\mathbb{Z}[Q^{(j)}]}\cap\mathfrak{I}$. Since $f\in\mathfrak{I}$, there exists a family $\Phi=(\phi_{i,j,k})_{1\leq i<j<k\leq d}$ of elements of $\mathbb{Z}[Q]$ such that
\begin{equation}
    f=\sum_{1\leq i<j<k\leq d} ([a_i,a_j]^{1-s_k}+[a_i,a_k]^{s_j-1}+[a_j,a_k]^{1-s_i})^{\phi_{i,j,k}}. \label{comm2}
\end{equation}
We call such $\Phi$ a representation of $f$. Now we consider the lexicographic order on $\mathbb{N}^3$, that is, 
\begin{equation*}
    (i,j,k)\leq(i',j',k')\Leftrightarrow i<i'\vee (i=i'\wedge j<j') \vee (i=i'\wedge j=j'\wedge k\leq k').
\end{equation*}
Let $\sigma(\Phi)$ be the smallest $(i,j,k)$ such that $\phi_{i,j,k}\neq0$. We may assume that $\sigma(\Phi)$ is the largest among all the representations of $f$. Set $\sigma(\Phi)=(i_0,j_0,k_0)$. By equation (\ref{comm2}), the $[a_{i_0},a_{j_0}]$-th coordinate of $f$ is
\begin{equation}
    \sum_{l<i_0}(1-s_l)\phi_{l,i_0,j_0}+\sum_{i_0<l<j_0}(s_l-1)\phi_{i_0,l,j_0}+\sum_{l>j_0} (1-s_l)\phi_{i_0,j_0,l}. \label{comm3}
\end{equation}
However by the definition of $\sigma(\Phi)$, $\phi_{l,i_0,j_0}=0$ if $l<i_0$, $\phi_{i_0,l,j_0}=0$ if $i_0<l<j_0$, and $\phi_{i_0,j_0,l}=0$ if $j_0<l<k_0$. Thus polynomial (\ref{comm3}) is equal to 
\begin{equation*}
    \sum_{l\geq k_0} (1-s_l)\phi_{i_0,j_0,l}.
\end{equation*}
Since $f\in\bigoplus_{1\leq i<j\leq d}[a_i,a_j]^{\mathbb{Z}[Q^{(j)}]}$, we have $\sum_{l\geq k_0} (1-s_l)\phi_{i_0,j_0,l}\in \mathbb{Z}[Q^{(j_0)}]$, and this must be $0$, which is verified by substituting $s_l=1$ for $l\geq k_0$ (note that $s_l\notin Q^{(j_0)}$). Hence we have
\begin{equation}
    (1-s_{k_0})\phi_{i_0,j_0,k_0}=-\sum_{l>k_0}(1-s_l)\phi_{i_0,j_0,l}. \label{comm4}
\end{equation}
Apply Proposition \ref{div2} to $\{1-s_l\}_{l>k_0}$ and $\phi_{i_0,j_0,k_0}$, and we have
\begin{equation*}
    \phi_{i_0,j_0,k_0}=\sum_{l>k_0}(1-s_l)\psi_l+\lambda
\end{equation*}
for some $\psi_l\in\mathbb{Z}[Q]$ and $\lambda\in\mathbb{Z}[Q^{(k_0)}]$. Then substitute this into equation (\ref{comm4}) and we have
\begin{equation*}
    (1-s_{k_0})\sum_{l>k_0}(1-s_l)\psi_{l}+(1-s_{k_0})\lambda=-\sum_{l>k_0}(1-s_l)\phi_{i_0,j_0,l}.
\end{equation*}
Since $(1-s_{k_0})\lambda\in\mathbb{Z}[Q^{(k_0)}]$, we have $\lambda=0$ by Proposition \ref{div2} (uniqueness of the remainder of division by $\{1-s_l\}_{l>k_0}$). Hence we have
\begin{equation}
    \phi_{i_0,j_0,k_0}=\sum_{l>k_0}(1-s_l)\psi_l. \label{comm5}
\end{equation}
Now we set $\alpha_{i,j,k}=[a_{i},a_{j}]^{1-s_{k}}+[a_{i},a_{k}]^{s_{j}-1}+[a_{j},a_{k}]^{1-s_{i}}$ for $1\leq i<j<k\leq d$. Note that
\begin{equation*}
    \alpha_{i,j,k}^{\quad1-s_{l}}+\alpha_{i,j,l}^{\quad s_{k}-1}+\alpha_{i,k,l}^{\quad 1-s_{j}}+\alpha_{j,k,l}^{\quad s_{i}-1}=0
\end{equation*}
for any $1\leq i<j<k<l\leq d$. Then by equation (\ref{comm5}), we have
\begin{equation*}
    \alpha_{i_0,j_0,k_0}^{\quad\phi_{i_0,j_0,k_0}} =\sum_{l>k_0}\alpha_{i_0,j_0,k_0}^{\quad(1-s_l)\psi_l}
    =\sum_{l>k_0}(\alpha_{i_0,j_0,l}^{\quad1-s_{k_0}}+\alpha_{i_0,k_0,l}^{\quad s_{j_0}-1}+\alpha_{j_0,k_0,l}^{\quad1-s_{i_0}})^{\psi_l}.
\end{equation*}
By adding $0=-\alpha_{i_0,j_0,k_0}^{\quad\phi_{i_0,j_0,k_0}}+\sum_{l>k_0}(\alpha_{i_0,j_0,l}^{\quad1-s_{k_0}}+\alpha_{i_0,k_0,l}^{\quad s_{j_0}-1}+\alpha_{j_0,k_0,l}^{\quad1-s_{i_0}})^{\psi_l}$ to equation (\ref{comm2}), we have
\begin{equation*}
    f=\sum_{\substack{1\leq i<j<k\leq d\\(i,j,k)\neq(i_0,j_0,k_0)}}\alpha_{i,j,k}^{\quad\phi_{i,j,k}}+\sum_{l>k_0}(\alpha_{i_0,j_0,l}^{\quad1-s_{k_0}}+\alpha_{i_0,k_0,l}^{\quad s_{j_0}-1}+\alpha_{j_0,k_0,l}^{\quad1-s_{i_0}})^{\psi_l}.
\end{equation*}
This gives a new representation $\Phi'=(\phi_{i,j,k}')$ of $f$ such that $\phi_{i,j,k}'=\phi_{i,j,k}=0$ if $(i,j,k)<(i_0,j_0,k_0)$, and $\phi_{i_0,j_0,k_0}'=0$. Then we have $\sigma(\Phi')>(i_0,j_0,k_0)=\sigma(\Phi)$, which contradicts the assumption of $\sigma(\Phi)$ being the largest.
\end{proof}

Let $1\leq c\leq d$. Set $U=Q^{(c)}$ and let $V$ be the subgroup of $Q$ generated by $\Sigma_V:=\{s_i\}_{i=c+1}^{d}$.
Fix $n\in\mathbb{N}$. Let $O$ be the $\mathbb{Z}[Q]$-submodule of $N$ generated by the set
\begin{equation*}
    \{[g, a_i^{2n}]\mid g\in G,\ c+1\leq i\leq d\}.
\end{equation*}

\begin{lem}\label{comm9}
If $f\in N$ and $c+1\leq i\leq d$, then $f^{1-s_i^{2n}}=-[f,a_i^{2n}]\in O$ holds.
\end{lem}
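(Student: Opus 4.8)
The plan is to reduce the statement to two elementary observations: that $f$ itself is an element of $N\subseteq G$, so that $[f,a_i^{2n}]$ is literally one of the chosen generators of $O$; and that, because $N$ is abelian, this commutator is nothing but $f^{s_i^{2n}-1}$.

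First I would record that for any $g\in G$ the commutator $[g,a_i^{2n}]$ lies in $G'=N$, so that the generating set of $O$ genuinely consists of elements of $N$ and $O$ is well defined. In particular, since $f\in N\subseteq G$ and $c+1\le i\le d$, the element $[f,a_i^{2n}]$ is one of these generators, hence $[f,a_i^{2n}]\in O$; as $O$ is a $\mathbb{Z}[Q]$-submodule it is closed under negation, so $-[f,a_i^{2n}]\in O$ as well.

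It then remains to check the identity $f^{1-s_i^{2n}}=-[f,a_i^{2n}]$. Writing $b=a_i^{2n}$, using the convention $[f,b]=f^{-1}b^{-1}fb=f^{-1}(b^{-1}fb)$, the fact that $N$ is abelian, and Notation \ref{comm19} with $\overline{b}=s_i^{2n}$, one computes $[f,b]=-f+f^{\overline{b}}=f^{s_i^{2n}}-f=f^{s_i^{2n}-1}$, whence $f^{1-s_i^{2n}}=-f^{s_i^{2n}-1}=-[f,a_i^{2n}]$; combined with the previous paragraph this gives $f^{1-s_i^{2n}}\in O$ as claimed. (The same identity also follows from equation (\ref{comm15}) applied with $g=f$ and $h=a_i$, together with $[f,a_i]=f^{s_i-1}$, but the direct computation is shorter since $f\in N$.)

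There is no real obstacle here: the only point requiring care is keeping track of the sign convention and of the passage between the multiplicative notation in $G$ and the additive notation in $N$, after which both the membership $-[f,a_i^{2n}]\in O$ and the identity $f^{1-s_i^{2n}}=-[f,a_i^{2n}]$ are immediate.
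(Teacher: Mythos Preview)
Your proof is correct and follows essentially the same route as the paper's own proof: both arguments observe that, since $f\in N\subset G$, the element $[f,a_i^{2n}]$ is one of the generators of $O$, and both compute directly (using the convention $[g,h]=g^{-1}h^{-1}gh$ and the additive notation in $N$) that $[f,a_i^{2n}]=f^{s_i^{2n}-1}$, i.e.\ $f^{1-s_i^{2n}}=-[f,a_i^{2n}]=[a_i^{2n},f]$. The paper's proof is simply the one-line version of what you wrote.
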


\begin{proof}
It follows from
\begin{equation*}
    f^{1-s_i^{2n}}=-f^{s_i^{2n}}+f=a_i^{-2n}f^{-1}a_i^{2n}f=[a_i^{2n},f].
\end{equation*}
\end{proof}

\begin{lem}\label{comm10}
For $1\leq i<j\leq d$, let $O_{i,j}$ be the ideal of ${\mathbb{Z}[Q^{(j)}]}$ generated by
\begin{equation*}
    \{1-s_k^{2n}\mid k\in[c+1,j]\backslash\{i,j\}\}\cup\{1+s_k+\cdots+s_k^{2n-1}\mid k\in[c+1,j]\cap\{i,j\}\}.
\end{equation*}
Then we have
\begin{equation}
    O=\sum_{1\leq i<j\leq d}[a_i,a_j]^{O_{i,j}}. \label{comm6}
\end{equation}
\end{lem}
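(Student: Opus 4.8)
Write $O'$ for the right-hand side of (\ref{comm6}); the goal is to prove the two inclusions $O'\subseteq O$ and $O\subseteq O'$. Throughout I would abbreviate $p_k=1+s_k+\cdots+s_k^{2n-1}$, so that $(1-s_k)p_k=1-s_k^{2n}$, and I would use (\ref{comm15}) in the form $[g,a_k^{2n}]=[g,a_k]^{p_k}$ for $g\in G$, $1\le k\le d$. I would also record the identity $[uv,a_i]=[u,a_i]^{\overline v}+[v,a_i]$ (an analogue of (\ref{pre6})): writing $g=wf$ with $w$ a word in the generators $a_1^{\pm 1},\dots,a_d^{\pm 1}$ and $f\in N$, it gives $[g,a_i]=[w,a_i]+f^{s_i-1}$, where $[w,a_i]$ is a $\mathbb{Z}[Q]$-linear combination of the elements $[a_i,a_j]$ with $j\neq i$.

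The inclusion $O'\subseteq O$ is the short half. Since $O$ is a $\mathbb{Z}[Q]$-module and each $O_{i,j}$ is an ideal of $\mathbb{Z}[Q^{(j)}]\subseteq\mathbb{Z}[Q]$, it is enough to place each generator $[a_i,a_j]^g$ of $[a_i,a_j]^{O_{i,j}}$ into $O$. If $g=1-s_k^{2n}$ with $k\in[c+1,j]\setminus\{i,j\}$, then Lemma \ref{comm9} applied to $f=[a_i,a_j]$ gives $[a_i,a_j]^{1-s_k^{2n}}=-[[a_i,a_j],a_k^{2n}]\in O$. If $g=p_j$ (which forces $j\ge c+1$), then $[a_i,a_j]^{p_j}=[a_i,a_j^{2n}]\in O$; if $g=p_i$ (which forces $i\ge c+1$), then $[a_i,a_j]^{p_i}=-[a_j,a_i^{2n}]\in O$ using $[a_j,a_i]=-[a_i,a_j]$. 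Summing over $i<j$ yields $O'\subseteq O$.

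For $O\subseteq O'$, the main step — and the part I expect to demand the most care — is to show that $O'$ is $Q$-invariant, hence a $\mathbb{Z}[Q]$-submodule of $N$. Let $\phi\in O_{i,j}$ and $1\le l\le d$. If $l\le j$ then $s_l^{\pm 1}$ is a unit of $\mathbb{Z}[Q^{(j)}]$, so $\phi s_l^{\pm 1}\in O_{i,j}$ and $[a_i,a_j]^{\phi s_l^{\pm 1}}\in O'$. If $l>j$ (so $i<j<l$), expanding $[a_i,a_j]^{\phi s_l^{\pm 1}}$ by (\ref{comm16}) and (\ref{comm17}) reduces the claim to $\phi(s_j-1)\in O_{i,l}$ and $\phi(1-s_i)\in O_{j,l}$ (together with the $s_l^{-1}$-analogues, obtained by multiplying by the unit $s_l^{-1}\in\mathbb{Z}[Q^{(l)}]$). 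Writing $\phi$ as a $\mathbb{Z}[Q^{(j)}]$-combination of the generators of $O_{i,j}$, every such generator other than $p_j$ is again a generator of $O_{i,l}$, while $(s_j-1)p_j=s_j^{2n}-1$ and $1-s_j^{2n}$ is a generator of $O_{i,l}$ (using $j\notin\{i,l\}$); symmetrically, every generator of $O_{i,j}$ other than $p_i$ is a generator of $O_{j,l}$, and $(1-s_i)p_i=1-s_i^{2n}$ is a generator of $O_{j,l}$. This matching of the generating sets of the various $O_{i,j}$ across the relations (\ref{comm16}) and (\ref{comm17}) is the technical heart of the argument; it is a finite check, but the index conditions must be tracked with care.

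Granting that $O'$ is a $\mathbb{Z}[Q]$-submodule, it remains to place each generator $[g,a_i^{2n}]$ of $O$ (with $g\in G$, $c+1\le i\le d$) into $O'$. Using $[g,a_i^{2n}]=[g,a_i]^{p_i}$ and $[g,a_i]=[w,a_i]+f^{s_i-1}$, I would write $[g,a_i^{2n}]=[w,a_i]^{p_i}+f^{(s_i-1)p_i}$. The term $[w,a_i]^{p_i}$ is a $\mathbb{Z}[Q]$-combination of the elements $[a_i,a_j]^{p_i}$ ($j\neq i$), each in $O'$ because $p_i$ is a generator of $O_{i,j}$ when $i<j$ and of $O_{j,i}$ when $j<i$ (this uses $i\ge c+1$); and $f^{(s_i-1)p_i}=-f^{1-s_i^{2n}}\in N^{1-s_i^{2n}}$. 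Finally I would check $N^{1-s_i^{2n}}\subseteq O'$ by showing $[a_k,a_l]^{1-s_i^{2n}}\in O'$ for all $k<l$: when $i\le l$ this holds since $1-s_i^{2n}\in O_{k,l}$ (a generator, or $(1-s_i)p_i$ with $p_i$ a generator, according as $i\notin\{k,l\}$ or $i\in\{k,l\}$), and when $i>l$ the relation (\ref{comm16}) rewrites $[a_k,a_l]^{1-s_i^{2n}}$ as $-[a_k,a_i]^{(s_l-1)p_i}-[a_l,a_i]^{(1-s_k)p_i}$, whose coefficients lie in $O_{k,i}$ and $O_{l,i}$ since $p_i$ generates each of those ideals. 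The $Q$-invariance of $O'$ then lets me pass from the $[a_k,a_l]$ to arbitrary $f\in N$ and absorb $\mathbb{Z}[Q]$-coefficients, giving $O\subseteq O'$, and hence $O=O'$.
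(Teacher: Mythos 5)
Your proof is correct and the two halves that carry the real weight, namely $O'\subseteq O$ and the $Q$-invariance of $O'$, match the paper's argument generator-by-generator. The only place where you take a slightly different route is the reduction of the elements $[g,a_i^{2n}]$ of $O$ to $O'$: the paper iterates the identity $[gh,a_k^{2n}]=[g,a_k^{2n}]^{\overline h}+[h,a_k^{2n}]$ (together with $[g^{-1},a_k^{2n}]=-[g,a_k^{2n}]^{\overline g^{-1}}$) directly on a word for $g$ and concludes that $O$ is generated as a $\mathbb Z[Q]$-module by the finite set $\{[a_i,a_k^{2n}]\mid 1\le i\le d,\ c+1\le k\le d\}$, after which $Q$-invariance of $O'$ finishes the job; you instead split $g=wf$ with $f\in N$ and handle the two summands separately, with the piece coming from $f$ requiring the auxiliary inclusion $N^{1-s_i^{2n}}\subseteq O'$. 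The paper's route is a little shorter because it never has to split off the $N$-part; yours makes the analogue of Lemma \ref{comm9} visible on the $O'$ side as well (you essentially reprove its statement with $O$ replaced by $O'$). Both are valid once $Q$-invariance of $O'$ is established, and that argument --- the bookkeeping of how $(s_j-1)\phi$ and $(1-s_i)\phi$ land in $O_{i,k}$ and $O_{j,k}$ when $k>j$ --- is the same in your write-up and in the paper.
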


\begin{proof}
Let $\widetilde{O}$ be the right-hand side of equation (\ref{comm6}). Let $1\leq i<j\leq d$ and $\phi\in O_{i,j}$. We show $[a_i,a_j]^\phi\in O$. We may assume that $\phi$ is in the generator of the ideal $O_{i,j}$, that is, $\phi=1-s_k^{2n}$ with $k\in[c+1,j]\backslash\{i,j\}$, or $\phi=1+s_k+\cdots+s_k^{2n-1}$ with $k\in[c+1,j]\cap\{i,j\}$. In the first case, $[a_i,a_j]^{(1-s_k^{2n})}\in O$ by the previous lemma. In the second case, by equation (\ref{comm15}) we have
\begin{align*}
    [a_i,a_j]^{1+s_j+\cdots+s_j^{2n-1}}&=[a_i,a_j^{2n}]\in O\quad \text{if }k=j, \text{ and}\\
    [a_i,a_j]^{1+s_i+\cdots+s_i^{2n-1}}&=-[a_j,a_i^{2n}]\in O\quad \text{if }k=i.
\end{align*}
Hence $O\supset \widetilde{O}$ holds.\par
Now we show the converse. Note that for all $g,h\in G$ and $c+1\leq k\leq d$,
\begin{align*}
    [gh,a_k^{2n}]&=[g,a_k^{2n}]^{\bar{h}}+[h,a_k^{2n}]
\end{align*}
holds by equation (\ref{pre6}), and
\begin{align*}
    [g^{-1},a_k^{2n}]&=-[g,a_k^{2n}]^{\bar{g}^{-1}}
\end{align*}
holds. Since $G$ is generated by $\{a_i\}_{i=1}^d$, $O$ is generated by 
\begin{equation*}
    A:=\{[a_i,a_k^{2n}]=[a_i,a_k]^{1+s_k+\cdots+s_k^{2n-1}}|\ 1\leq i\leq d,\ c+1\leq k\leq d\}
\end{equation*}
as a $\mathbb{Z}[Q]$-module. Let $1\leq i\leq d$ and $c+1\leq k\leq d$. Then we have 
\begin{align*}
    1+s_k+\cdots+s_k^{2n-1}&\in O_{i,k}\quad\text{if }i<k,\text{ and}\\
    1+s_k+\cdots+s_k^{2n-1}&\in O_{k,i}\quad\text{if }i>k.
\end{align*}
If $i=k$, then $[a_i,a_k]=0$. It follows that $A\subset \widetilde{O}$. Thus it suffices to show that $\widetilde{O}$ is $Q$-invariant. Let $1\leq i<j\leq d$ and $\phi\in O_{i,j}$. We show that $[a_i,a_j]^{s_k^{\pm 1}\phi}\in \widetilde{O}$ for every $1\leq k\leq d$. Since $O_{i,j}$ is $Q^{(j)}$-invariant, we may assume $k>j$. Then by equation (\ref{comm16}) and (\ref{comm17}) we have
\begin{align*}
    &[a_i,a_j]^{s_k\phi}=[a_i,a_j]^\phi+[a_i,a_k]^{(s_j-1)\phi}+[a_j,a_k]^{(1-s_i)\phi}\quad\text{and}\\
    &[a_i,a_j]^{s_k^{-1}\phi}=[a_i,a_j]^\phi+[a_i,a_k]^{(1-s_j)s_k^{-1}\phi}+[a_j,a_k]^{(s_i-1)s_k^{-1}\phi}.
\end{align*}
It suffices to show that $(1-s_j)\phi\in O_{i,k}$ and $(1-s_i)\phi\in O_{j,k}$. Since $\phi\in O_{i,j}$, there exist $\psi_l\in\mathbb{Z}[Q^{(j)}]$ for $c+1\leq l\leq d$ such that 
\begin{equation*}
    \phi=\sum_{l\in[c+1,j]\backslash\{i,j\}}(1-s_l^{2n})\psi_l+\sum_{l\in[c+1,j]\cap\{i,j\}}(1+s_l+...+s_l^{2n-1})\psi_l.
\end{equation*}
Then we have
\begin{align*}
    (1-s_j)\phi=&\sum_{l\in[c+1,j]\backslash\{i,j\}}(1-s_j)(1-s_l^{2n})\psi_l\\
    &+1_{[c+1,d]}(i)\cdot(1-s_j)(1+s_i+\cdots+s_i^{2n-1})\psi_i\\
    &+1_{[c+1,d]}(j)\cdot(1-s_j^{2n})\psi_j,
\end{align*}
and thus $(1-s_j)\phi\in O_{i,k}$. In the same way, $(1-s_i)\phi\in O_{j,k}$ holds. Hence $[a_i,a_j]^{s_k^{\pm 1}\phi}\in \widetilde{O}$ holds.
\end{proof}

\begin{dfn}\label{comm8}
For $1\leq i<j\leq d$, define the subgroup $M_{i,j}\leq \mathbb{Z}[Q^{(j)}]$ as follows:
\begin{enumerate}
    \item If $j\leq c$, then let $M_{i,j}=\mathbb{Z}[Q^{(j)}]$.
    \item If $j\geq c+1$, then let $M_{i,j}$ be the $\mathbb{Z}[U]$-submodule of $\mathbb{Z}[Q^{(j)}]$ generated by
    \begin{equation*}
    \left\{ s_{c+1}^{m_{c+1}}\cdots s_j^{m_j}\;\middle|
    \begin{array}{l}
        m_{k}\in[-n+1,n]\text{ for } k\in[c+1,j]\backslash\{i,j\},\text{ and }\\
        m_{k}\in[-n+1,n-1]\text{ for }k\in[c+1,j]\cap\{i,j\}
    \end{array}
    \right\}.
    \end{equation*}
\end{enumerate}
Then we set
\begin{equation}
    M=\sum_{1\leq i<j\leq d}[a_i,a_j]^{M_{i,j}}. \label{comm7}
\end{equation}
We call $M$ the \textit{$U$-residue of $O$}.
\end{dfn}

\begin{rmk}
Let $1\leq i<j\leq d$. For $1\leq k\leq j$, define $n_k\in\mathbb{N}\cup\{\infty\}$ by
\begin{align*}
    n_k&=\infty\quad\text{if }k\in [1,c],\\
    n_k&=2n\quad\text{if }k\in[c+1,j]\backslash\{i,j\},\text{ and}\\
    n_k&=2n-1\quad\text{if }k\in[c+1,j]\cap\{i,j\}.
\end{align*}
Then $M_{i,j}=M(n_1,...,n_j;s_1,...,s_j)$ holds.
\end{rmk}

\begin{nta}\label{comm22}
Let $A$ be a finitely generated free abelian group over the basis $\Sigma_A=\{s_1,...,s_d\}$. For $m\in \mathbb{N}$, we set
\begin{equation*}
    \text{B}_A(m,\Sigma_A)=\{s_1^{k_1}\cdots s_d^{k_d}\in A\mid -\lceil m/2\rceil< k_i\leq \lfloor m/2 \rfloor\text{ for every }i\}.
\end{equation*}
\end{nta}

\begin{lem}\label{comm11}
The subgroup $M$ is $U$-invariant and contains 
\begin{equation*}
    \{x^v\mid x\in X,\ v\in \textup{B}_Q(2n-1,\Sigma_Q)\}.
\end{equation*}
\end{lem}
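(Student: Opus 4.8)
The claim has two parts, and I would treat them in turn; both rest on the rewriting identities (\ref{comm16}) and (\ref{comm17}), which for $i<j<k$ express $[a_i,a_j]^{s_k^{\pm m}}$ as $[a_i,a_j]$ plus correction terms supported on $[a_i,a_k]$ and $[a_j,a_k]$, together with the identification $M_{i,j}=M(n_1,\dots,n_j;s_1,\dots,s_j)$ from the remark after Definition \ref{comm8} (with $n_l=\infty$ for $l\le c$, $n_l=2n$ for $l\in[c+1,j]\setminus\{i,j\}$, and $n_l=2n-1$ for $l\in[c+1,j]\cap\{i,j\}$), which makes the exponent bounds defining $M_{i,j}$ completely transparent.

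\emph{$U$-invariance.} Since $M$ is generated as an abelian group by the elements $[a_i,a_j]^\phi$ with $1\le i<j\le d$ and $\phi\in M_{i,j}$, it suffices to check $[a_i,a_j]^{s_k^{\pm1}\phi}\in M$ for every such pair and every $1\le k\le c$. If $j\ge c+1$ this is immediate, since $M_{i,j}$ is by construction a $\mathbb{Z}[U]$-module and $s_k^{\pm1}\in\mathbb{Z}[U]$. If $j\le c$, then $M_{i,j}=\mathbb{Z}[Q^{(j)}]$; for $k\le j$ we still have $s_k^{\pm1}\phi\in\mathbb{Z}[Q^{(j)}]$, and for $j<k\le c$ we invoke (\ref{comm16}) or (\ref{comm17}) with the triple $i<j<k$, which rewrites $[a_i,a_j]^{s_k^{\pm1}\phi}$ as $[a_i,a_j]^{\phi}$ plus two terms $[a_i,a_k]^{\psi}$ and $[a_j,a_k]^{\psi'}$ with $\psi,\psi'\in\mathbb{Z}[Q^{(k)}]$; since $k\le c$ forces $M_{i,k}=M_{j,k}=\mathbb{Z}[Q^{(k)}]$, all three summands lie in $M$. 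Iterating gives closure under the whole of $U$.

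\emph{Containment.} Fix $x=[a_i,a_j]$ and $v=s_1^{k_1}\cdots s_d^{k_d}\in\textup{B}_Q(2n-1,\Sigma_Q)$, so $k_l\in[-n+1,n-1]$ for all $l$ (because $\lceil(2n-1)/2\rceil=n$ and $\lfloor(2n-1)/2\rfloor=n-1$). Factor $v=uw$ with $u=s_1^{k_1}\cdots s_c^{k_c}\in U$ and $w=s_{c+1}^{k_{c+1}}\cdots s_d^{k_d}$; then $x^v=(x^w)^u$, and by the $U$-invariance just proved it is enough to show $x^w\in M$. Split $w=\phi\mu$ with $\phi=\prod_{c+1\le l\le j}s_l^{k_l}$ and $\mu=\prod_{\max(j,c)<l\le d}s_l^{k_l}$; comparing exponents with the description of $M_{i,j}$ shows $\phi\in M_{i,j}$. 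The core is then the following, proved by induction on $\sum_l|m_l|$: for all $1\le i<j\le d$, all $\phi\in M_{i,j}$, and all monomials $\mu=\prod_{l>\max(j,c)}s_l^{m_l}$ with $m_l\in[-n+1,n-1]$, one has $[a_i,a_j]^{\phi\mu}\in M$. The base case $\mu=1$ is the definition of $M$. For the step, let $k$ be the largest index with $m_k\ne0$ (so $i<j<k$ and $k\ge c+1$), write $\mu=\mu's_k^{m_k}$, and apply (\ref{comm16})/(\ref{comm17}) to get
\[
[a_i,a_j]^{\phi\mu}=[a_i,a_j]^{\phi\mu'}+[a_i,a_k]^{\phi\mu'(s_j-1)\rho}+[a_j,a_k]^{\phi\mu'(1-s_i)\rho},
\]
where $\rho\in\mathbb{Z}[s_k^{\pm1}]$ is the relevant geometric sum (namely $1+s_k+\cdots+s_k^{m_k-1}$ or $-(s_k^{-1}+\cdots+s_k^{m_k})$ according to the sign of $m_k$), whose $s_k$-exponents lie in $[-n+1,n-1]$ since $|m_k|\le n-1$. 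The first summand is covered by the induction hypothesis, and the other two follow from the definition of $M$ once one checks $\phi\mu'(s_j-1)\rho\in M_{i,k}$ and $\phi\mu'(1-s_i)\rho\in M_{j,k}$. Then the lemma follows by applying this to $\phi,\mu$ above and restoring $u$ by $U$-invariance.

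I expect that last verification to be the main obstacle: one must check, coordinate by coordinate, that every monomial occurring in those two products respects the \emph{asymmetric} bounds of Definition \ref{comm8} — exponents in $[-n+1,n-1]$ for the two distinguished indices of the ambient commutator, in $[-n+1,n]$ for the remaining indices $>c$, and unconstrained for indices $\le c$. The accounting that makes this work is: $\phi$ contributes exponents already within the $M_{i,j}$-bounds, $\mu'$ contributes exponents in $[-n+1,n-1]$, the multiplier $s_j-1$ (resp. $1-s_i$) shifts only the $s_j$- (resp. $s_i$-) exponent and only by $0$ or $1$, and $\rho$ contributes only the $s_k$-exponent, which stays in $[-n+1,n-1]$. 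Thus the only index ever pushed to the extreme value $n$ is a non-distinguished one ($s_j$ in the $[a_i,a_k]$-term, $s_i$ in the $[a_j,a_k]$-term), while the distinguished indices $\{i,k\}$ of $[a_i,a_k]$ and $\{j,k\}$ of $[a_j,a_k]$ receive only contributions confined to $[-n+1,n-1]$ — which is exactly why the asymmetry in Definition \ref{comm8} was arranged the way it is.
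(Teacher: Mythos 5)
Your proof is correct and follows essentially the same route as the paper: the $U$-invariance argument is word-for-word the same, and for the containment the paper likewise reduces to $v\in \textup{B}_V(2n-1,\Sigma_V)$ by $U$-invariance, inducts on the largest index $k$ with $m_k\neq 0$, peels off $s_k^{m_k}$ via equations (\ref{comm16})/(\ref{comm17}), and verifies $s_k^mw,\ s_js_k^mw\in M_{i,k}$ and $s_k^mw,\ s_is_k^mw\in M_{j,k}$ by exactly the exponent accounting you describe. The only difference is cosmetic: you induct on $\sum_l|m_l|$ and phrase the hypothesis with a general $\phi\in M_{i,j}$, where the paper inducts directly on $k$ with $w$ a monomial.
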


\begin{proof}
Let $1\leq i<j\leq d$ and $\phi\in M_{i,j}$. We show that $[a_i,a_j]^{s_k^{\pm 1}\phi}\in M$ for every $1\leq k\leq c$. If $j\geq c+1$, then $s_k^{\pm 1}\phi\in M_{i,j}$ since $M_{i,j}$ is $U$-invariant, and it is done. Now suppose $j\leq c$. If $k\leq j\leq c$, then $s_k^{\pm 1}\phi\in \mathbb{Z}[Q^{(j)}]=M_{i,j}$, and it is done. If $j<k\leq c$, then by equations (\ref{comm16}) and (\ref{comm17}) we have
\begin{align*}
    &[a_i,a_j]^{s_k\phi}=[a_i,a_j]^\phi+[a_i,a_k]^{(s_j-1)\phi}+[a_j,a_k]^{(1-s_i)\phi}\quad\text{and}\\
    &[a_i,a_j]^{s_k^{-1}\phi}=[a_i,a_j]^\phi+[a_i,a_k]^{(1-s_j)s_k^{-1}\phi}+[a_j,a_k]^{(s_i-1)s_k^{-1}\phi}.
\end{align*}
Since $\phi,s_i,s_j,s_k\in\mathbb{Z}[Q^{(k)}]=M_{i,k}=M_{j,k}$, we have $[a_i,a_j]^{s_k^{\pm 1}\phi}\in M$. Hence $M$ is $U$-invariant.\par
For the remaining claim, it suffices to show that
\begin{equation*}
    \{x^v\mid x\in X,\ v\in \text{B}_V(2n-1,\Sigma_V)\}\subset M
\end{equation*}
since $M$ is $U$-invariant. Let $1\leq i<j\leq d$ and $v=s_{c+1}^{m_{c+1}}\cdots s_d^{m_d}\in \text{B}_V(\Sigma_V,2n-1)$. We show that $[a_i,a_j]^v\in M$. If $v$ is the identity of $Q$, it is clear. Otherwise let $k\in[c+1,d]$ be the largest integer such that $m_k\neq 0$. We prove by the induction on $k$. If $k\leq j$, then $v\in M_{i,j}$ since $m_{c+1},...,m_{k}\in[-n+1,n-1]$, and thus $[a_i,a_j]^v\in M$ holds. Suppose $k>j$ and set $w=s_k^{-m_k}v=s_{c+1}^{m_{c+1}}\cdots s_{k-1}^{m_{k-1}}$. If $m_k>0$, then by equation (\ref{comm16}) we have
\begin{align*}
    [a_i,a_j]^{v}=[a_i,a_j]^{s_k^{m_k}w}
    =&[a_i,a_j]^w+[a_i,a_k]^{(s_j-1)(1+s_k+\cdots+s_k^{m_k-1})w}\\
    &+[a_j,a_k]^{(1-s_i)(1+s_k+\cdots+s_k^{m_k-1})w}.
\end{align*}
If $m_k<0$, then by equation (\ref{comm17}) we have
\begin{align*}
    [a_i,a_j]^v=[a_i,a_j]^{s_k^{m_k}w}=&[a_i,a_j]^{w}+[a_i,a_k]^{(1-s_j)(s_k^{-1}+s_k^{-2}+\cdots+s_k^{m_k})w}\\
    &+[a_j,a_k]^{(s_i-1)(s_k^{-1}+s_k^{-2}+\cdots+s_k^{m_k})w}.
\end{align*}
By the induction hypothesis, $[a_i,a_j]^w\in M$. It suffices to show that
\begin{equation*}
    (1-s_j)s_k^{m}w\in M_{i,k} \text{ and } (1-s_i)s_k^mw\in M_{j,k}
\end{equation*}
for every $m\in[-n+1,n-1]$. This follows from
\begin{align}
    s_k^mw&=s_{c+1}^{m_{c+1}}\cdots s_{k-1}^{m_{k-1}}s_k^{m}\in M_{i,k}\cap M_{j,k},\label{comm18}\\
    s_js_k^mw&=s_js_{c+1}^{m_{c+1}}\cdots s_{k-1}^{m_{k-1}}s_k^{m}\in M_{i,k}\text{ and}\label{comm20}\\
    s_is_k^mw&=s_is_{c+1}^{m_{c+1}}\cdots s_{k-1}^{m_{k-1}}s_k^{m}\in M_{j,k}.\label{comm21}
\end{align}
Claim (\ref{comm18}) holds since $m_{c+1},...,m_{k-1}\in[-n+1,n-1]$. If $j\leq c$, then $s_js_k^mw\in M_{i,k}$ since $M_{i,k}$ is $U$-invariant. If $j\geq c+1$, then $s_js_k^mw\in M_{i,k}$ since $m_j+1\in[-n+1,n]$. Thus claim (\ref{comm20}) holds. Claim (\ref{comm21}) also holds in the same way. Hence $[a_i,a_j]^v\in M$ is proved.
\end{proof}

\begin{prop}\label{comm12}
Let $\pi:N\rightarrow N/O$ be the quotient map. Then the restriction $\pi|_{M}:M\rightarrow N/O$ is a bijection.
\end{prop}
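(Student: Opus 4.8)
The plan is to show that $\pi|_M$ is both injective and surjective by exploiting the decomposition machinery of Proposition \ref{comm13} together with the explicit descriptions of $O$ and $M$ from Lemmas \ref{comm10} and \ref{comm11}. The key point is that both $N/O$ and $M$ should turn out to be ``spanned by the same monomials modulo $O$'', namely by the elements $[a_i,a_j]^{\phi}$ with $\phi$ ranging over a set of coset representatives of $O_{i,j}$ in $\mathbb{Z}[Q^{(j)}]$, and that $M_{i,j}$ is precisely such a set of representatives.

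For surjectivity, I would start from an arbitrary $f\in N$ and apply Proposition \ref{comm13} to write $f=\sum_{1\leq i<j\leq d}[a_i,a_j]^{\phi_{i,j}}$ with $\phi_{i,j}\in\mathbb{Z}[Q^{(j)}]$. For each pair $(i,j)$, I want to replace $\phi_{i,j}$ by an element of $M_{i,j}$ without changing the class modulo $O$. When $j\leq c$ there is nothing to do since $M_{i,j}=\mathbb{Z}[Q^{(j)}]$. When $j\geq c+1$, the relevant observation is that $M_{i,j}=M(n_1,\dots,n_j;s_1,\dots,s_j)$ is, by Proposition \ref{div2} applied to the monic polynomials $1-s_k^{2n}$ (degree $2n$) for $k\in[c+1,j]\setminus\{i,j\}$ and $1+s_k+\cdots+s_k^{2n-1}$ (degree $2n-1$) for $k\in[c+1,j]\cap\{i,j\}$ in the variables $s_{c+1},\dots,s_j$, a complete and irredundant set of remainders for division by the ideal $O_{i,j}$. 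Hence $\phi_{i,j}\equiv\lambda_{i,j}\pmod{O_{i,j}}$ for a unique $\lambda_{i,j}\in M_{i,j}$; by Lemma \ref{comm10}, $[a_i,a_j]^{\phi_{i,j}-\lambda_{i,j}}\in O$, so $f\equiv\sum[a_i,a_j]^{\lambda_{i,j}}\pmod O$, and the right-hand side lies in $M$ by definition. This gives $\pi|_M$ surjective.

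For injectivity, suppose $f\in M\cap O$; I must show $f=0$. Write $f=\sum_{1\leq i<j\leq d}[a_i,a_j]^{\lambda_{i,j}}$ with $\lambda_{i,j}\in M_{i,j}$ (this is how elements of $M$ look, by (\ref{comm7})); by the uniqueness half of Proposition \ref{comm13} this representation is unique once we know $\lambda_{i,j}\in\mathbb{Z}[Q^{(j)}]$. On the other hand, $f\in O=\sum_{1\leq i<j\leq d}[a_i,a_j]^{O_{i,j}}$ by Lemma \ref{comm10}, so $f=\sum[a_i,a_j]^{\mu_{i,j}}$ with $\mu_{i,j}\in O_{i,j}\subset\mathbb{Z}[Q^{(j)}]$. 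Again by the uniqueness in Proposition \ref{comm13}, $\lambda_{i,j}=\mu_{i,j}$ for every pair, so $\lambda_{i,j}\in M_{i,j}\cap O_{i,j}$. Now the uniqueness of the remainder in Proposition \ref{div2} forces $\lambda_{i,j}=0$: indeed $\lambda_{i,j}\in M_{i,j}$ is its own remainder modulo $O_{i,j}$, while $\lambda_{i,j}\in O_{i,j}$ has remainder $0$, and these coincide. Hence $f=0$.

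I expect the main obstacle to be bookkeeping rather than anything deep: carefully matching the generators of $O_{i,j}$ with the hypotheses of Proposition \ref{div2} (they are exactly monic of the stated positive degrees in distinct variables $s_{c+1},\dots,s_j$, all different from the ``free'' variables), and confirming that the box $M_{i,j}$ in Definition \ref{comm8} coincides on the nose with the remainder space $M(n_1,\dots,n_j;s_1,\dots,s_j)$ of that proposition — which is precisely the content of the Remark following Definition \ref{comm8}. One subtlety worth flagging: Proposition \ref{div2} divides by $\{1-s_l\}$ or by monic polynomials indexed by a sub-range of variables while treating the rest as scalars, so I need the ideal $O_{i,j}\subset\mathbb{Z}[Q^{(j)}]$ to be generated by polynomials each involving only a single variable among $s_{c+1},\dots,s_j$, which is visibly the case. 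Once these identifications are in place, both directions are immediate from Propositions \ref{comm13} and \ref{div2} and Lemma \ref{comm10}.
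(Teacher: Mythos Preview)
Your proposal is correct and follows essentially the same route as the paper: both reduce the claim to $M_{i,j}+O_{i,j}=\mathbb{Z}[Q^{(j)}]$ and $M_{i,j}\cap O_{i,j}=0$ for each pair $(i,j)$ via Proposition~\ref{div2}, and then assemble these into $M+O=N$ and $M\cap O=0$ using the unique decomposition of Proposition~\ref{comm13} together with Lemma~\ref{comm10}. The paper states the argument at the level of sums of submodules while you work element-by-element, but the logic and the ingredients are identical.
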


\begin{proof}
Let $1\leq i<j\leq d$. For $1\leq k\leq j$, we define $\phi_k\in\mathbb{Z}[s_k^{\pm 1}]$ and $n_k\in\mathbb{N}\cup\{\infty\}$ by
\begin{align*}
    &\phi_k=0,\ n_k=\infty\quad\text{if }k\in [1,c],\\
    &\phi_k=1-s_k^{2n},\ n_k=2n\quad\text{if }k\in[c+1,j]\backslash\{i,j\},\text{ and}\\
    &\phi_k=1+s_k+\cdots+s_k^{2n-1},\ n_k=2n-1\quad\text{if }k\in[c+1,j]\cap\{i,j\}.
\end{align*}
Then $O_{i,j}$ is the ideal of $\mathbb{Z}[Q^{(j)}]$ generated by $\{\phi_k\}_{k=1}^j$, and the equation
\begin{equation*}
    M_{i,j}=M(n_1,...,n_j;s_1,...,s_j)
\end{equation*}
holds. By Proposition \ref{div2}, we have $M_{i,j}+O_{i,j}=\mathbb{Z}[Q^{(j)}]$ and $M_{i,j}\cap O_{i,j}=0$.
\par
Now we show the proposition. By Proposition \ref{comm13} we have
\begin{equation*}
    N=\sum_{1\leq i<j\leq d}[a_i,a_j]^{\mathbb{Z}[Q^{(j)}]}.
\end{equation*}
Also by equations (\ref{comm7}) and (\ref{comm6}), we have
\begin{equation*}
    M=\sum_{1\leq i<j\leq d}[a_i,a_j]^{M_{i,j}}\text{ and }
    O=\sum_{1\leq i<j\leq d}[a_i,a_j]^{O_{i,j}}.
\end{equation*}
Hence we have
\begin{equation*}
    M+O=\sum_{1\leq i<j\leq d}[a_i,a_j]^{M_{i,j}+O_{i,j}}=\sum_{1\leq i<j\leq d}[a_i,a_j]^{\mathbb{Z}[Q^{(j)}]}=N
\end{equation*}
and the surjectivity is proved.\par
Again by Proposition \ref{comm13} (uniqueness), we have
\begin{align*}
   M\cap O&= \left(\sum_{1\leq i<j\leq d}[a_i,a_j]^{M_{i,j}}\right)\cap \left(\sum_{1\leq i<j\leq d}[a_i,a_j]^{O_{i,j}}\right)\\
   &=\sum_{1\leq i<j\leq d}[a_i,a_j]^{M_{i,j}\cap O_{i,j}}=0.
\end{align*}
The injectivity is proved.
\end{proof}

\section{Proof of Theorem \ref{intr1}}\label{mains}

Let $G=F_d/F_d''$ for $d\geq 2$. Set $N=G'=F_d'/F_d''$ and $Q=G/N$. We follow Notation \ref{comm19}.\par
Let $R\leq Q$ be a subgroup. By Corollary \ref{pre9}, it suffices to construct a F{\o}lner sequence $(F_n)$ of $G$ satisfying property ($\#_R$) in Corollary \ref{pre9}.

\subsection{Construction of the F{\o}lner sequence}\label{mains1}

Take subgroups $U,V\leq Q$ such that $Q=U\times V$ and $R$ is a finite index subgroup of $U$. Set $c=\text{rank}(U)$. Let $\{s_1,...,s_c\}$ and $\{s_{c+1},...,s_{d}\}$ be generators of $U$ and $V$, respectively. We may assume that $\overline{a_i}=s_i$ for every $1\leq i\leq d$ by taking a suitable free generator $\{a_1,...,a_d\}$ of $G$ since the natural map Aut$(F_d)\rightarrow$ Aut$(F_d/F_d')$ is surjective \cite[Chapter I, Proposition 4.4]{LS01}. Set $\Sigma_Q=\{s_1,...,s_d\}$.\par
Define the section $q\in Q\mapsto\widehat{q}\in G$ of the quotient map by $\widehat{q}=a_1^{k_1}\cdots a_d^{k_d}$ if $q=s_1^{k_1}\cdots s_d^{k_d}$ with $k_1,...,k_d\in\mathbb{Z}$.

\begin{lem}\label{main1}
Let $g\in G$ and $q=s_1^{\epsilon_1 k_1}\cdots s_d^{\epsilon_d k_d}\in Q$ with $\epsilon_i\in\{\pm 1\}$ and $k_i\geq 0$ for $1\leq i\leq d$. Then we have
\begin{equation*}
    [g,\widehat{q}]=\sum_{\substack{1\leq i\leq d\\:k_i\geq 1}}\sum_{l=0}^{k_i-1} [g,a_i^{\epsilon_i}]^{s_i^{\epsilon_il}s_{i+1}^{\epsilon_{i+1}k_{i+1}}\cdots s_d^{\epsilon_dk_d}}.
\end{equation*}
\end{lem}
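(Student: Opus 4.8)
The plan is to prove the formula by induction on $k_1+\cdots+k_d$, peeling off one letter of $\widehat{q}$ at a time and repeatedly applying the commutator identity~(\ref{pre6}). First I would set up the base case: if all $k_i=0$ then $q$ is the identity, $\widehat{q}$ is the identity of $G$, the left-hand side is $0$, and the right-hand side is an empty sum, so the identity holds trivially.

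For the inductive step, write $q=s_1^{\epsilon_1k_1}\cdots s_d^{\epsilon_dk_d}$ and let $i_0$ be the smallest index with $k_{i_0}\geq 1$. By the definition of the section, $\widehat{q}=a_{i_0}^{\epsilon_{i_0}}\widehat{q'}$ where $q'=s_{i_0}^{\epsilon_{i_0}(k_{i_0}-1)}s_{i_0+1}^{\epsilon_{i_0+1}k_{i_0+1}}\cdots s_d^{\epsilon_dk_d}$ (note that $\widehat{\cdot}$ is multiplicative on such "decreasing" words because it just concatenates the powers $a_1^{k_1}\cdots a_d^{k_d}$). Then by~(\ref{pre6}),
\begin{equation*}
    [g,\widehat{q}]=[g,a_{i_0}^{\epsilon_{i_0}}\widehat{q'}]=[g,\widehat{q'}]+[g,a_{i_0}^{\epsilon_{i_0}}]^{\overline{\widehat{q'}}}.
\end{equation*}
Now $\overline{\widehat{q'}}=q'=s_{i_0}^{\epsilon_{i_0}(k_{i_0}-1)}s_{i_0+1}^{\epsilon_{i_0+1}k_{i_0+1}}\cdots s_d^{\epsilon_dk_d}$, and applying the induction hypothesis to $[g,\widehat{q'}]$ gives exactly the claimed sum but with $k_{i_0}$ replaced by $k_{i_0}-1$; adding the extra term $[g,a_{i_0}^{\epsilon_{i_0}}]^{s_{i_0}^{\epsilon_{i_0}(k_{i_0}-1)}s_{i_0+1}^{\epsilon_{i_0+1}k_{i_0+1}}\cdots s_d^{\epsilon_dk_d}}$ restores the missing $l=k_{i_0}-1$ summand in the inner sum over $i=i_0$, while leaving all the $i>i_0$ summands untouched. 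This reassembles the full formula.

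The routine-but-slightly-fussy part will be bookkeeping the exponents: one must check that the exponent $s_{i_0}^{\epsilon_{i_0}(k_{i_0}-1)}s_{i_0+1}^{\epsilon_{i_0+1}k_{i_0+1}}\cdots s_d^{\epsilon_dk_d}$ of the new term matches the $l=k_{i_0}-1$ instance of $s_i^{\epsilon_il}s_{i+1}^{\epsilon_{i+1}k_{i+1}}\cdots s_d^{\epsilon_dk_d}$ with $i=i_0$, and that the induction-hypothesis terms for $i>i_0$ carry the same tail exponents in both $q$ and $q'$ (they do, since $q$ and $q'$ differ only in the $s_{i_0}$-exponent). I expect the main conceptual obstacle, such as it is, to be confirming that the chosen section is multiplicative enough for the factorization $\widehat{q}=a_{i_0}^{\epsilon_{i_0}}\widehat{q'}$ to be valid — but since $\widehat{q}$ is defined as the literal word $a_1^{k_1}\cdots a_d^{k_d}$ and $i_0$ is the first nonzero index, splitting off a single $a_{i_0}^{\epsilon_{i_0}}$ from the front is immediate, so there is no real difficulty; the whole argument is a clean telescoping induction.
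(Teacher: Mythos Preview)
Your proof is correct and is essentially the same approach as the paper's: both arguments amount to iterated application of the identity~(\ref{pre6}) to peel letters off $\widehat{q}$. The only organizational difference is that the paper first splits $\widehat{q}$ into the $d$ blocks $a_i^{\epsilon_ik_i}$ via~(\ref{pre6}) and then expands each block using the pre-packaged identity~(\ref{comm15}), whereas you peel off a single letter $a_{i_0}^{\epsilon_{i_0}}$ at each step and run a direct induction on $k_1+\cdots+k_d$; the content is the same telescoping computation.
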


\begin{proof}
By equation (\ref{pre6}), we have
\begin{align*}
     [g,\widehat{q}]&=[g,a_1^{\epsilon_1k_1}\cdots a_d^{\epsilon_dk_d}]\\
     &=[g,a_1^{\epsilon_1k_1}]^{s_2^{\epsilon_2k_2}\cdots s_d^{\epsilon_dk_d}}+[g,a_2^{\epsilon_2k_2}\cdots a_d^{\epsilon_dk_d}]\\
     &=\cdots \\
     &=\sum_{i=1}^d[g,a_i^{\epsilon_ik_i}]^{s_{i+1}^{\epsilon_{i+1}k_{i+1}}\cdots s_d^{\epsilon_dk_d}}.
\end{align*}
Then for every $i$ with $k_i\geq 1$, we have
\begin{align*}
    [g,a_i^{\epsilon_ik_i}]=\sum_{l=0}^{k_i-1} [g,a_i^{\epsilon_i}]^{s_i^{\epsilon_il}}
\end{align*}
by equation (\ref{comm15}). Hence the lemma holds.
\end{proof}

In this subsection, we construct subgroups $Q_n\leq Q$ and $M_n\leq N$, and finite subsets $I_n\subset Q$ and $T_n,P_n\subset M_n$ such that $F_n=\widehat{I_n}P_n$ is a F{\o}lner sequence of $G$. We will apply Theorem \ref{pre5} to them in the next subsection.\par

For $n\in\mathbb{N}$, we set $m_n=n\cdot [U:R]\in\mathbb{N}$, $V_n=\{v^{2m_n}\mid v\in V\}\leq V$ and $Q_n=R\times V_n\leq Q$. Recall Notation \ref{comm22}. Then the following is clear.

\begin{lem}\label{main5}
\textup{}
\begin{enumerate}
    \item We have $R\leq Q_n$ and $Q_n\rightarrow R$ in \textup{Sub}$(Q)$.
    \item The subset $I_n:=\textup{B}_Q(2m_n,\Sigma_Q)$ is a finite-to-one transversal for $Q_n$ in $Q$.
\end{enumerate}
\end{lem}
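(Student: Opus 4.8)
The plan is to use the direct–product decomposition $Q=U\times V$ systematically, noting that both the approximating subgroups $Q_n=R\times V_n$ and the sets $I_n=\textup{B}_Q(2m_n,\Sigma_Q)=B_U\times B_V$ split as products, where $B_U=\prod_{i=1}^{c}\{s_i^{k}\mid -m_n<k\leq m_n\}\subseteq U$ and $B_V=\prod_{i=c+1}^{d}\{s_i^{k}\mid -m_n<k\leq m_n\}\subseteq V$; then every assertion can be checked one factor at a time.

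For part (i), the inclusion $R\leq Q_n$ is immediate since $R$ is a direct factor of $Q_n$. For the convergence $Q_n\to R$ in $\textup{Sub}(Q)$ I would verify, as the product topology on $2^Q$ demands, that $1_{Q_n}(q)\to 1_R(q)$ for each fixed $q\in Q$. If $q\in R$ then $q\in Q_n$ for all $n$. If $q\notin R$, write $q=uv$ with $u\in U$, $v\in V$: when $u\notin R$ we have $q\notin Q_n$ for every $n$; when $u\in R$ we must have $v\neq e$, and since $V_n=\{w^{2m_n}\mid w\in V\}$ while $m_n=n[U:R]\to\infty$, every nonzero exponent occurring in $v$ eventually fails to be divisible by $2m_n$, so $q\notin Q_n$ for all large $n$. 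In either case $1_{Q_n}(q)$ equals $0=1_R(q)$ eventually.

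For part (ii), I would first record the elementary reduction that a subset $F\subseteq Q$ is a finite-to-one transversal for a finite-index subgroup $H\leq Q$ if and only if the quotient map $F\to Q/H$ is surjective with all fibers of one common finite size $k$ (given such an $F$, colouring the $k$ elements of each fiber by $1,\dots,k$ exhibits $F$ as a disjoint union of $k$ transversals). Applying this with $H=Q_n$ and $Q/Q_n\cong(U/R)\times(V/V_n)$, the map $I_n\to Q/Q_n$ is the product of $B_U\to U/R$ and $B_V\to V/V_n$. On the $V$-side, identifying $V\cong\mathbb{Z}^{d-c}$ gives $V_n=2m_nV$, and $B_V$ is exactly a box of side $2m_n$, hence a complete set of coset representatives, so $B_V\to V/V_n$ is a bijection. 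On the $U$-side, set $e=[U:R]$; since the exponent of $U/R$ divides $e$ we get $eU\subseteq R$, and because $m_n=ne$ the box $B_U$ of side $2m_n=2ne$ covers each class of $U/eU\cong(\mathbb{Z}/e\mathbb{Z})^{c}$ exactly $(2n)^{c}$ times, while the further surjection $U/eU\to U/R$ has every fiber of size $[R:eU]=e^{c-1}$; composing, $B_U\to U/R$ is surjective with every fiber of size $(2n)^{c}e^{c-1}$. Multiplying the two factors, $I_n\to Q/Q_n$ is surjective with every fiber of size $(2n)^{c}[U:R]^{c-1}$, so $I_n$ is a finite-to-one transversal for $Q_n$; one can cross-check this against $|I_n|/[Q:Q_n]=(2m_n)^{d}/([U:R](2m_n)^{d-c})$.

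As the author's ``clear'' signals, no step here is a genuine obstacle; the only point that needs a moment of care is the combinatorics of the last paragraph — that an interval of consecutive integers whose length is a multiple of $e$ meets each residue class modulo $e$ equally often, and that this equidistribution survives the additional quotient $U/eU\to U/R$ — with everything else being formal bookkeeping with the product decomposition.
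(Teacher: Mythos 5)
Your argument is correct. The paper itself gives no proof of this lemma, dismissing it as ``clear,'' so there is nothing to compare it against; your write-up is exactly the sort of unpacking the author is silently relying on. A few points worth confirming: the equivalence you use — that a subset of $Q$ is a finite-to-one transversal for a finite-index subgroup $H$ precisely when the map to $Q/H$ is surjective with all fibers of one common size — is a correct restatement of the paper's definition (a disjoint union of finitely many transversals), and the colouring argument you sketch establishes it. The factorization $I_n=B_U\times B_V$ under $Q=U\times V$ is also correct once one notes that $\textup{B}_Q(2m_n,\Sigma_Q)$ is the box with exponents in $(-m_n,m_n]$ (since $\lceil 2m_n/2\rceil=\lfloor 2m_n/2\rfloor=m_n$), and your two-stage computation on the $U$-side via $U/eU\to U/R$ with $e=[U:R]$ gives the constant fiber size $(2n)^{c}e^{c-1}$, which matches the cardinality check $|I_n|/[Q:Q_n]$. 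The one thing that makes the whole lemma work — and which you correctly isolate — is the choice $m_n=n[U:R]$, ensuring $e\mid 2m_n$ so that the equidistribution modulo $e$ is exact rather than merely approximate.
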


Let $O_n$ be the $\mathbb{Z}[Q]$-submodule of $N$ generated by 
\begin{equation*}
    \{[g, a_i^{2m_n}]\mid g\in G,\ c+1\leq j\leq d\}.
\end{equation*}
We regard $N/O_n$ as a $\mathbb{Z}[Q]$-module. Let $\pi_n:N\rightarrow N/O_n$ be the quotient map.
Let $M_n\leq N$ be the $U$-residue of $O_n$ (Definition \ref{comm8}). The following is a consequence of our observation in Section \ref{comms}.

\begin{lem}\label{main2}
\text{   }
\begin{enumerate}
    \item The action of $V_n$ on $N/O_n$ is trivial.
    \item The subgroup $M_n$ is $R$-invariant.
    \item The map $\pi_n|_{M_n}:M_n\rightarrow N/O_n$ is a bijection.
\end{enumerate}
\end{lem}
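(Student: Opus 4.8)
The plan is to derive all three items directly from results already established in Section~\ref{comms}, after checking that the definitions in Section~\ref{mains1} are the instances of the Section~\ref{comms} setup with the integer ``$n$'' replaced by $m_n$. Concretely, in Section~\ref{comms} we fixed $U=Q^{(c)}$, $V=\langle s_{c+1},\dots,s_d\rangle$, an integer $n\in\mathbb{N}$, the submodule $O$ generated by $\{[g,a_i^{2n}]\mid g\in G,\ c+1\le i\le d\}$, and its $U$-residue $M$. In Section~\ref{mains1} we have chosen $Q=U\times V$ with $U$ of rank $c$ and generators arranged so that $\overline{a_i}=s_i$; thus $O_n$ is exactly the module ``$O$'' of Section~\ref{comms} for the value $2m_n$ in place of $2n$, and $M_n$ is its $U$-residue. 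So everything proved in Lemmas~\ref{comm9}--\ref{comm11} and Proposition~\ref{comm12} applies verbatim with $n$ replaced by $m_n$.

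For item (i): $V_n=\{v^{2m_n}\mid v\in V\}$ is generated by $s_{c+1}^{2m_n},\dots,s_d^{2m_n}$, so it suffices to show each $s_i^{2m_n}$ acts trivially on $N/O_n$ for $c+1\le i\le d$, i.e.\ $f^{1-s_i^{2m_n}}\in O_n$ for every $f\in N$. This is exactly Lemma~\ref{comm9} applied with $2m_n$ in place of $2n$: $f^{1-s_i^{2m_n}}=[a_i^{2m_n},f]\in O_n$ by the definition of $O_n$. For item (ii): $M_n$ is the $U$-residue of $O_n$, and Lemma~\ref{comm11} (again with $m_n$ for $n$) asserts that the $U$-residue is $U$-invariant; since $R\le U$, the subgroup $M_n$ is in particular $R$-invariant. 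For item (iii): this is precisely the statement of Proposition~\ref{comm12} with $m_n$ in place of $n$ and $O_n,M_n,\pi_n$ in place of $O,M,\pi$ --- the restriction $\pi_n|_{M_n}\colon M_n\to N/O_n$ is a bijection.

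I expect there to be essentially no obstacle here: the only thing to be careful about is the bookkeeping that the hypotheses of Section~\ref{comms} really are in force, namely that $\{a_1,\dots,a_d\}$ is a free generating set of $G$ with $\overline{a_i}=s_i$ (ensured in Section~\ref{mains1} via surjectivity of $\mathrm{Aut}(F_d)\to\mathrm{Aut}(F_d/F_d')$), that $U=Q^{(c)}=\langle s_1,\dots,s_c\rangle$, and that $V=\langle s_{c+1},\dots,s_d\rangle$. Once this identification is in place, the proof is a one-line citation of Lemma~\ref{comm9}, Lemma~\ref{comm11}, and Proposition~\ref{comm12} respectively. The writeup should therefore be short: state the substitution $n\rightsquigarrow m_n$, then invoke the three earlier results.
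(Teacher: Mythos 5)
Your proposal is correct and matches the paper's proof essentially verbatim: item (i) from Lemma~\ref{comm9}, item (ii) from Lemma~\ref{comm11} together with $R\le U$, and item (iii) from Proposition~\ref{comm12}, all via the substitution $n\rightsquigarrow m_n$.
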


\begin{proof}
(i) Since $V_n$ is generated by $\{s_i^{2m_n}\}_{i=c+1}^d$, it follows from Lemma \ref{comm9}.\par
(ii) Since $R\leq U$ and $M_n$ is $U$-invariant by Lemma \ref{comm11}, the claim holds.\par
(iii) This follows from Proposition \ref{comm12}.
\end{proof}

\begin{lem}\label{main14}
Set $\pi_n'=(\pi_n|_{M_n})^{-1}\circ\pi_n:N\rightarrow M_n$. If $P$ is a finite-to-one transversal for a subgroup $K\leq M_n$ in $M_n$, then $P$ is also a finite-to-one transversal for $\pi_n'^{-1}(K)$ in $N$. 
\end{lem}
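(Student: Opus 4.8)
The plan is to reduce the statement to the elementary fact that $N$ decomposes as an internal direct sum $N = M_n \oplus O_n$ of abelian groups. First I would observe that $\pi_n|_{M_n}\colon M_n \to N/O_n$ is a group homomorphism, being the restriction of the quotient homomorphism $\pi_n$, and that it is bijective by Lemma \ref{main2}(iii); hence $M_n + O_n = N$ and $M_n \cap O_n = \{0\}$. Consequently $\pi_n' = (\pi_n|_{M_n})^{-1}\circ \pi_n$ is precisely the projection of $N$ onto $M_n$ along $O_n$: it is a surjective homomorphism with kernel $O_n$ and with $\pi_n'|_{M_n} = \mathrm{id}_{M_n}$.

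Next, for a subgroup $K \leq M_n$, I would record that $\pi_n'^{-1}(K) = K + O_n$, and that this sum is internally direct, $K \oplus O_n$, since $K \leq M_n$ and $M_n \cap O_n = \{0\}$. The key step is then to show that every transversal $P'$ for $K$ in $M_n$ is also a transversal for $\pi_n'^{-1}(K) = K + O_n$ in $N$. Surjectivity of the cosets is immediate: $\bigcup_{p \in P'}(p + K + O_n) = \bigl(\bigcup_{p \in P'}(p+K)\bigr) + O_n = M_n + O_n = N$. For disjointness, if $p + (K + O_n) = p' + (K + O_n)$ with $p, p' \in P' \subseteq M_n$, then $p - p' \in (K + O_n) \cap M_n$; writing $p - p' = k + o$ with $k \in K \subseteq M_n$ and $o \in O_n$ forces $o = (p-p') - k \in M_n \cap O_n = \{0\}$, so $p - p' = k \in K$, and therefore $p = p'$ because $P'$ is a transversal for $K$ in $M_n$.

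Finally I would assemble the conclusion. A finite-to-one transversal $P$ for $K$ in $M_n$ is by definition a disjoint union $P = P_1 \sqcup \cdots \sqcup P_r$ of finitely many transversals for $K$ in $M_n$; by the previous paragraph each $P_i$ is a transversal for $\pi_n'^{-1}(K)$ in $N$, so $P$ is a disjoint union of finitely many transversals for $\pi_n'^{-1}(K)$ in $N$, i.e.\ a finite-to-one transversal for $\pi_n'^{-1}(K)$ in $N$, as claimed. There is no serious obstacle in this argument; the only point requiring care is keeping the internal direct sum decomposition straight so that the disjointness check goes through — essentially one is using that the inclusion $M_n \hookrightarrow N$ induces an isomorphism $M_n/K \xrightarrow{\sim} N/\pi_n'^{-1}(K)$, under which $P$ maps to the same coset representatives.
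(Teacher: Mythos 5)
Your proof is correct and is essentially the paper's argument: both rest on the fact that $\pi_n'$ is a homomorphism of $N$ onto $M_n$ with kernel $O_n$ that restricts to the identity on $M_n$. The paper simply takes preimages of the decomposition $M_n=\bigsqcup_{p\in P}(p+K)$ under $\pi_n'$ in one line, while you unfold the same fact through the internal splitting $N=M_n\oplus O_n$ and check covering and disjointness by hand; the mathematical content is the same.
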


\begin{proof}
Let $P$ be a transversal for a subgroup $K\leq M_n$ in $M_n$. Then $M_n=\bigsqcup_{p\in P}pK$ holds. Since $\pi_n'|_{M_n}=$ id$_{M_n}$, we have
\begin{equation*}
    N=\pi_n'^{-1}(M_n)=\pi_n'^{-1}(\bigsqcup_{p\in P}pK)=\bigsqcup_{p\in P}p\cdot\pi_n'^{-1}(K).
\end{equation*}
Hence $P$ is a transversal for $\pi_n'^{-1}(K)$ in $N$.
\end{proof}

We set
\begin{align*}
    X&=\{[a_i,a_j]\in N\mid 1\leq i<j\leq d\},\\
    Z_n&=\{x^q\in N\mid x\in X,\ q\in \text{B}_Q(2m_n-1,\Sigma_Q)\}\text{ and}\\
    T_n&=\left\{\sum_{z\in Z_n}k_zz\in N\;\middle|\;|k_z|\leq n^2\text{ for every }z\in Z_n\right\}.
\end{align*}

Note that $T_n\subset T_{n+1}$ for every $n$ and $\bigcup_nT_n=N$. By Lemma \ref{comm11}, $Z_n\subset M_n$ and thus $T_n\subset M_n$ holds.

\begin{lem}\label{main12}
The sequence $T_n$ is adapted to the sequence $\widehat{I_n}$.
\end{lem}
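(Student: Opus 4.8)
The plan is to unwind the definition of ``adapted''. Fix $g\in G$ and a finite set $\Phi\subset N$; since the section $\widehat{\cdot}$ is injective, $\widehat{I_n}$ is in bijection with $I_n=\mathrm{B}_Q(2m_n,\Sigma_Q)$, and I must show that the proportion of $q\in I_n$ with $[g,\widehat q]+\Phi\subset T_n$ tends to $1$. The whole argument reduces to one estimate: for all but a vanishing fraction of $q\in I_n$, the element $[g,\widehat q]$ admits an expression $\sum_{z\in Z_n}k_z z$ with $|k_z|=O(n)$. Granting this, since $T_n$ allows coefficients up to $n^2$ and $\Phi$ is a fixed finite set, we have $\Phi\subset T_{n_0}$ for some $n_0$, hence (using $Z_{n_0}\subset Z_n$) every $\phi\in\Phi$ has a $Z_n$-representation with coefficients $\le n_0^2$; adding such a representation to one of $[g,\widehat q]$ gives a $Z_n$-representation of $[g,\widehat q]+\phi$ with coefficients $O(n)+n_0^2\le n^2$ once $n$ is large, so $[g,\widehat q]+\Phi\subset T_n$.

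First I would expand $[g,\widehat q]$ via Lemma~\ref{main1}: writing $q=s_1^{k_1}\cdots s_d^{k_d}$ and $\epsilon_i=\mathrm{sgn}(k_i)$, that lemma expresses $[g,\widehat q]$ as a sum, over the $i$ with $k_i\ne 0$ and over $0\le t\le|k_i|-1$, of the elements $[g,a_i^{\epsilon_i}]^{\,s_i^{\epsilon_i t}s_{i+1}^{k_{i+1}}\cdots s_d^{k_d}}$. Since $X$ generates $N$ as a $\mathbb{Z}[Q]$-module, each of the $2d$ elements $[g,a_i^{\pm 1}]\in N$ can be written as a fixed finite combination $\sum_{x\in X,\,v\in S}c_{x,v}\,x^v$ with $|c_{x,v}|\le C$ and $S\subset\mathrm{B}_Q(D,\Sigma_Q)$, where $C,D,S$ depend only on $g$. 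Substituting, $[g,\widehat q]$ becomes an integer combination of monomials $x^{w}$ with $w=v\cdot s_i^{\epsilon_i t}\cdot s_{i+1}^{k_{i+1}}\cdots s_d^{k_d}$, and the total mass of the coefficients (even after collecting equal monomials) is at most $\bigl(\sum_i|k_i|\bigr)\cdot C\,|X|\,|S|\le d\,m_n\,C\,|X|\,|S|=O(n)$, so in particular every coefficient is $O(n)$.

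The hard part will be the boundary (support) estimate: I must check that if every exponent of $q$ satisfies $|k_{i'}|\le m_n-1-D$, then each monomial $x^w$ occurring above lies in $Z_n$, i.e.\ $w\in\mathrm{B}_Q(2m_n-1,\Sigma_Q)$. This is a direct inspection of the exponents of $w=v\cdot s_i^{\epsilon_i t}\cdot s_{i+1}^{k_{i+1}}\cdots s_d^{k_d}$: for a coordinate $i'>i$ the exponent of $s_{i'}$ in $w$ differs from $k_{i'}$ by at most $D$; for $i'=i$ it has absolute value at most $D+(|k_i|-1)$; and for $i'<i$ it has absolute value at most $D$. Using $|k_{i'}|\le m_n-1-D$ and $m_n\ge D+1$, all of these are $\le m_n-1$, which is precisely the membership condition for $\mathrm{B}_Q(2m_n-1,\Sigma_Q)$ (note $\lceil(2m_n-1)/2\rceil=m_n$ and $\lfloor(2m_n-1)/2\rfloor=m_n-1$). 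Hence for such $q$ and $n$ large, $[g,\widehat q]$ is a combination of elements of $Z_n$ with coefficients $O(n)$, and by the first paragraph $[g,\widehat q]+\Phi\subset T_n$.

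Finally the counting is routine. The set of ``bad'' $q\in I_n$ — those with $|k_{i'}|>m_n-1-D$ for some coordinate $i'$ — has size at most $d(2D+1)(2m_n)^{d-1}$ by a union bound over $i'$, while $|I_n|=(2m_n)^d$. Therefore the proportion of $q\in I_n$ with $[g,\widehat q]+\Phi\subset T_n$ is at least $1-\dfrac{d(2D+1)}{2m_n}$, which tends to $1$ since $m_n\to\infty$. This is exactly the convergence (\ref{pre10}) with $F_n=\widehat{I_n}$, so $T_n$ is adapted to $\widehat{I_n}$.
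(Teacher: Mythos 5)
Your proposal is correct and follows essentially the same strategy as the paper: expand $[g,\widehat q]$ by Lemma~\ref{main1}, observe that each of the $O(m_n)$ resulting terms is a conjugate of one of the finitely many elements $[g,a_i^{\pm 1}]$, restrict to $q$ in a slightly shrunken ball so that all conjugates stay supported in $Z_n$, bound the resulting $Z_n$-coefficients by $O(n)\le n^2-n_0^2$, and finish with a boundary count. The only cosmetic difference is in the bookkeeping: the paper picks $n_0$ so that $\{[g,a_i^{\pm 1}]\}\cup\Phi\subset T_{n_0}$ and uses the single set $J_n=\mathrm{B}_Q(2(m_n-m_{n_0})+1,\Sigma_Q)$ to simultaneously control both the $\mathbb{Z}$-coefficient bound ($n_0^2$ per term via $T_{n_0}^q\subset T_n$) and the $\mathbb{Z}[Q]$-support, whereas you introduce explicit constants $C$ and $D$ for a fixed $X$-representation of the $[g,a_i^{\pm 1}]$ and run the two bounds separately; the two are interchangeable.
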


\begin{proof}
Let $g\in G$ and let $\Phi\subset N$ be a finite subset. Take $n_0\in\mathbb{N}$ so that 
\begin{equation*}
\{[g,a_i^{\pm 1}]\mid 1\leq i\leq d\}\cup\Phi\subset T_{n_0}.
\end{equation*}
Set $J_n=\text{B}_Q(2(m_n-m_{n_0})+1,\Sigma_Q)\subset I_n$ for $n>n_0$. Note that $Z_{n_0}^q\subset Z_n$ for every $q\in J_n$ since $J_n\text{B}_Q(2m_{n_0}-1,\Sigma_Q)\subset \text{B}_Q(2m_n-1,\Sigma_Q)$. Thus for every $q\in J_n$, we have
\begin{equation*}
    T_{n_0}^q\subset\left\{\sum_{z\in Z_n}k_zz\in N\;\middle|\;|k_z|\leq n_0^2\text{ for every }z\in Z_n\right\}.
\end{equation*}
Since $n^2/m_n\rightarrow\infty$, we have $n_0^2(m_nd+1)\leq n^2$ for every $n$ sufficiently large. Fix such $n>n_0$. We show that $[g,\widehat{q}]+\Phi\subset T_n$ for every $q\in J_n$. If $q=s_1^{\epsilon_1 k_1}...s_d^{\epsilon_d k_d}\in J_n$ with $\epsilon_i\in\{\pm 1\}$ and $k_i\geq 0$, then by Lemma \ref{main1} we have
\begin{equation}
    [g,\widehat{q}]=\sum_{\substack{1\leq i\leq d\\:k_i\geq 1}}\sum_{l=0}^{k_i-1} [g,a_i^{\epsilon_i}]^{s_i^{\epsilon_il}s_{i+1}^{\epsilon_{i+1}k_{i+1}}...s_d^{\epsilon_dk_d}}.\label{main10}
\end{equation}
Then each term $[g,a_i^{\epsilon_i}]^{s_i^{\epsilon_il}s_{i+1}^{\epsilon_{i+1}k_{i+1}}\cdots s_d^{\epsilon_dk_d}}$ of the right-hand side is in the set
\begin{equation*}
    \left\{\sum_{z\in Z_n}k_zz\in N\;\middle|\;|k_z|\leq n_0^2\text{ for every }z\in Z_n\right\}
\end{equation*}
since $[g,a_i^{\epsilon_i}]\in T_{n_0}\text{ and }s_i^{\epsilon_il}s_{i+1}^{\epsilon_{i+1}k_{i+1}}\cdots s_d^{\epsilon_dk_d}\in J_n.$ Also since the right-hand side of equation (\ref{main10}) has at most $m_nd$ terms,
\begin{equation*}
    [g,\widehat{q}]\in\left\{\sum_{z\in Z_n}k_zz\in N\;\middle|\;|k_z|\leq n_0^2m_nd\text{ for every }z\in Z_n\right\}.
\end{equation*}
It follows from $\Phi\subset T_{n_0}$ and $n_0^2(m_nd+1)\leq n^2$ that
\begin{equation*}
    [g,\widehat{q}]+\Phi\subset\left\{\sum_{z\in Z_n}k_zz\in N\;\middle|\;|k_z|\leq n_0^2(m_nd+1)\text{ for every }z\in Z_n\right\}\subset T_n.
\end{equation*}
Hence we have
\begin{equation*}
    \frac{|\{q\in I_n\mid [g,\widehat{q}]+\Phi\subset T_n\}|}{|I_n|}\geq\frac{|J_n|}{|I_n|}\rightarrow 1\quad(n\rightarrow \infty).
\end{equation*}
\end{proof}

Let $L_n$ be the subgroup of $N$ generated by $Z_n$. Let $Y_n\subset L_n$ be a basis of $L_n$ as a free abelian group. Note that the sequence
\begin{equation*}
    \left\{\sum_{y\in Y_n}k_y y\in L_n\;\middle|\;k_y\in [0,l]\text{ for every }y\in Y_n\right\}\quad(l=1,2,...)
\end{equation*}
is a F{\o}lner sequence of $L_n$. Thus we can take $l_n\in\mathbb{N}$ so that the set
\begin{equation*}
P_n:=\left\{\sum_{y\in Y_n}k_y y\in L_n\;\middle|\;k_y\in [0,l_nn!-1]\text{ for every }y\in Y_n\right\}
\end{equation*}
is $(T_n,1/n)$-invariant, that is, $|P_n\cap(f+P_n)|\geq (1-1/n)|P_n|$ for every $f\in T_n$. Since $Z_n\subset M_n$, we have $P_n\subset L_n\subset M_n$.

\begin{lem}\label{main16}
The sequence $F_n:=\widehat{I_n}P_n$ is a F{\o}lner sequence of $G$.
\end{lem}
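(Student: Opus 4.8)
The plan is to verify the Følner condition for $F_n = \widehat{I_n}P_n$ directly: for each generator $a_k$ of $G$ I must show $|a_k F_n \cap F_n|/|F_n| \to 1$ (equivalently $|F_n \triangle a_k F_n|/|F_n| \to 0$). The key structural fact is that every element of $G$ factors through the section as $\widehat{q}\,f$ with $q \in Q$, $f \in N$, so multiplication by $a_k$ decomposes into a shift on the $Q$-coordinate and a shift on the $N$-coordinate, with a twist coming from the commutator $[a_k, \widehat{q}]$. Concretely, for $q \in I_n$ and $p \in P_n$ one computes $a_k \widehat{q} p = \widehat{a_k q}\, (\text{something in }N)$, and the point is to control how far the $N$-part drifts out of $P_n$.

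First I would establish that $\widehat{I_n}$ is \emph{almost invariant} as a subset of the transversal picture: since $I_n = \mathrm{B}_Q(2m_n, \Sigma_Q)$ is a box in the free abelian group $Q$, it is a Følner set for $Q$, and $s_k \cdot I_n$ overlaps $I_n$ in all but a $O(1/m_n)$ fraction. For $q$ in this large overlap, $\widehat{a_k q}$ differs from $a_k \widehat{q}$ only by an element of $N$ — call it $c_{k,q} = (a_k\widehat{q})^{-1}\widehat{a_k q}$ or a commutator expression of the type appearing in Lemma~\ref{main1} — and this element lies in some fixed set $T_{n_0}$-translate, hence in $T_n$ for $n$ large, by exactly the argument of Lemma~\ref{main12}. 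Second, I would use that $P_n$ was chosen to be $(T_n, 1/n)$-invariant: translating $P_n$ by any element of $T_n$ changes it by at most a $1/n$ fraction. Combining: for $q$ in the overlap, $a_k\widehat{q}\,p = \widehat{a_k q}\,(c_{k,q}^{-1} \cdot {}^{?}p)$ where the $N$-part is $p$ shifted by an element of $T_n$ (using that conjugation by $\widehat{q}$, i.e.\ the $Q$-action, sends $P_n \subset L_n$ into a controlled region — here one needs $P_n$'s defining box to absorb the $Q^{(c)} = U$-action, which is why $P_n$ is built from a $U$-invariant-ish basis $Y_n$ of $L_n$, or one restricts $q$ further to a sub-box $J_n$ as in Lemma~\ref{main12}). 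So $a_k F_n$ and $F_n$ agree except on: (a) the $q$'s outside the $I_n$-overlap, a $O(1/m_n) \to 0$ fraction; and (b) for each good $q$, a $O(1/n) \to 0$ fraction of the $p$'s. A union bound gives $|F_n \triangle a_k F_n| \le o(1)|F_n|$.

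The main obstacle is bookkeeping the interaction between the $Q$-shift and the $N$-shift when $a_k$ has $k \le c$ (so $s_k \in U$ acts nontrivially on $N$ and on $P_n$): one must ensure that conjugating $p \in P_n$ by $\widehat{q}$ for $q$ ranging over (a sub-box of) $I_n$ keeps the result inside a set to which $P_n$'s $(T_n,1/n)$-invariance applies, i.e.\ inside $T_n + P_n$ up to small error. This is handled by shrinking $I_n$ to the sub-box $J_n$ exactly as in the proof of Lemma~\ref{main12} — since $|J_n|/|I_n| \to 1$ this costs nothing — and by noting $Z_n$, hence $L_n$ and $P_n$, is large enough that the relevant $U$-translates of the commutator terms $[a_k, a_i^{\pm 1}]^{(\cdots)}$ stay within $T_n$ (again the $n^2$ versus $m_n d$ comparison from Lemma~\ref{main12}). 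For $k \ge c+1$, $s_k \in V$ and one additionally uses that $V_n$ acts trivially on $N/O_n$ (Lemma~\ref{main2}(i)), so the $N$-drift lands in $O_n$ and is reabsorbed; but since $F_n$ only involves $p \in P_n \subset M_n$ and $q \in I_n$ with the box $I_n$ chosen of width $2m_n$ precisely so that $I_n$ is a transversal for $Q_n = R \times V_n$ (Lemma~\ref{main5}), the bounded-drift estimate goes through uniformly. Assembling these, the Følner property follows.

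\begin{proof}
It suffices to show that for every $1 \le k \le d$,
\begin{equation*}
    \frac{|F_n \,\triangle\, a_k^{\pm 1} F_n|}{|F_n|} \to 0 \quad (n \to \infty),
\end{equation*}
since $\{a_1,\dots,a_d\}$ generates $G$. Fix $k$ and a sign, write $a = a_k^{\pm 1}$ and $s = s_k^{\pm 1} = \overline{a}$. Since $I_n = \mathrm{B}_Q(2m_n, \Sigma_Q)$ is a box in the free abelian group $Q$, we have $|I_n \cap s^{-1}I_n| \ge (1 - 1/m_n)|I_n|$. Let $I_n' = I_n \cap s^{-1}I_n \cap J_n$, where $J_n = \mathrm{B}_Q(2(m_n - m_{n_0})+1, \Sigma_Q)$ with $n_0$ chosen (depending on $a$ only) as in the proof of Lemma~\ref{main12} so that $\{[a, a_i^{\pm 1}] \mid 1 \le i \le d\} \subset T_{n_0}$; then $|I_n'|/|I_n| \to 1$.

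Fix $n$ large and $q \in I_n'$, and set $q' = sq \in I_n$. For $p \in P_n$ we compute in $G$, using $\widehat{q'}{}^{-1}(a\widehat{q}) \in N$:
\begin{equation*}
    a\,\widehat{q}\,p = \widehat{q'}\cdot\bigl(\widehat{q'}{}^{-1}(a\widehat{q})\bigr)\cdot p = \widehat{q'}\cdot\bigl(f_q + p\bigr), \qquad f_q := \widehat{q'}{}^{-1}(a\widehat{q}) \in N,
\end{equation*}
where we used that $N$ is abelian and normal. Now $\widehat{q'}{}^{-1}(a\widehat{q}) = [a,\widehat{q}]^{?}$ up to a conjugation; in any case, expanding $a\widehat{q} = \widehat{q'} \cdot [\,\widehat{q'}{}^{-1}, a^{-1}\,]^{?}\cdots$ via equation (\ref{pre6}) and Lemma~\ref{main1}, one finds that $f_q$ lies in
\begin{equation*}
    \Bigl\{\textstyle\sum_{z \in Z_n} k_z z \;\Bigm|\; |k_z| \le n_0^2 m_n d \text{ for every } z \in Z_n\Bigr\} \subset T_n,
\end{equation*}
exactly as in the estimate of Lemma~\ref{main12} (here $n_0^2(m_n d + 1) \le n^2$ for $n$ large since $n^2/m_n \to \infty$). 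Hence $f_q \in T_n$.

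Since $P_n$ is $(T_n, 1/n)$-invariant, $|P_n \cap (f_q + P_n)| \ge (1 - 1/n)|P_n|$. For $p$ in this intersection, $f_q + p \in P_n$, so $a\widehat{q}p = \widehat{q'}(f_q+p) \in \widehat{I_n}P_n = F_n$; and the map $p \mapsto f_q + p$ is injective. Therefore, for each $q \in I_n'$,
\begin{equation*}
    |a F_n \cap F_n \cap (a\widehat{q}P_n)| \ge \Bigl(1 - \tfrac{1}{n}\Bigr)|P_n|.
\end{equation*}
Summing over $q \in I_n'$, and noting that the sets $\widehat{q}P_n$ ($q \in I_n$) are the pieces of $F_n$ determined by the $Q$-coordinate so that distinct $q \in I_n$ give disjoint contributions to this count (up to the bounded-to-one multiplicity of $I_n$, which cancels in the ratio),
\begin{equation*}
    |aF_n \cap F_n| \ge \Bigl(1 - \tfrac{1}{n}\Bigr)|I_n'|\,|P_n| \ge \Bigl(1 - \tfrac{1}{n}\Bigr)\Bigl(1 - \tfrac{|I_n \setminus I_n'|}{|I_n|}\Bigr)|F_n|.
\end{equation*}
Since $|I_n'|/|I_n| \to 1$, the right-hand side is $(1 - o(1))|F_n|$. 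Replacing $a$ by $a^{-1}$ and intersecting gives $|F_n \triangle a_k^{\pm 1}F_n|/|F_n| \to 0$, as required.
\end{proof}
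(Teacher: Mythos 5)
Your proof follows the same strategy as the paper's: reduce to showing $|F_n\cap a_kF_n|/|F_n|\to 1$ per generator, factor $a\widehat{q}=\widehat{sq}\cdot f_q$ with $f_q\in N$ given by a commutator expression controlled via Lemma~\ref{main1}, restrict to a sub-box $J_n$ so that $f_q\in T_n$ (the same $n_0^2(m_nd+1)\le n^2$ estimate as Lemma~\ref{main12}), invoke the $(T_n,1/n)$-invariance of $P_n$, and sum over $q$ in the overlap of $I_n$, $s^{-1}I_n$, and $J_n$. One small slip: after writing $a\widehat{q}p=\widehat{q'}(f_q+p)$, the relevant set of good $p$'s is $P_n\cap(-f_q+P_n)$, not $P_n\cap(f_q+P_n)$ — from $p\in P_n\cap(f_q+P_n)$ you get $p-f_q\in P_n$, not $f_q+p\in P_n$; this is harmless because $T_n$ is symmetric so $-f_q\in T_n$ and $(T_n,1/n)$-invariance still yields the needed lower bound, but the intersection should be corrected.
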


\begin{proof}
Since $G$ is generated by $\{a_i\}_{i=1}^d$, it suffices to show that 
\begin{equation*}
     \frac{|F_n\cap a_iF_n|}{|F_n|}\rightarrow 1\quad (n\rightarrow \infty)
\end{equation*}
for every $1\leq i\leq d$. Fix $1\leq i\leq d$. Take $n_0\in\mathbb{N}$ so that $\{[a_i,a_j^{\pm 1}]\}_{j=1}^i\subset T_{n_0}$. Set $J_n=\text{B}_Q(2(m_n-m_{n_0})+1),\Sigma_Q)\subset I_n$ for $n>n_0$. Then for every $q\in J_n$, we have
\begin{equation*}
    T_{n_0}^q\subset\left\{\sum_{z\in Z_n}k_zz\in N\;\middle|\;|k_z|\leq n_0^2\text{ for every }z\in Z_n\right\}
\end{equation*}
as proved in the proof of Lemma \ref{main12}. Since $n^2/m_n\rightarrow\infty$, we have $n_0^2m_ni\leq n^2$ for every $n$ sufficiently large. Fix such $n>n_0$. Let $q=s_1^{\epsilon_1k_1}...s_d^{\epsilon_dk_d}\in Q$ with $\epsilon_j\in\{\pm 1\}$ and $k_j\geq 0$ for $1\leq j\leq d$. Then we have
\begin{align*}
    a_i\widehat{q}&=a_ia_1^{\epsilon_1k_1}\cdots a_d^{\epsilon_dk_d}\notag\\
    &= a_ia_1^{\epsilon_1k_1}\cdots a_i^{\epsilon_ik_i}\cdot a_{i+1}^{\epsilon_{i+1}k_{i+1}}\cdots a_d^{\epsilon_dk_d}\notag\\
    &=a_1^{\epsilon_1k_1}\cdots a_i^{\epsilon_ik_i} a_i[a_i, a_1^{\epsilon_1k_1}\cdots a_i^{\epsilon_ik_i}]\cdot a_{i+1}^{\epsilon_{i+1}k_{i+1}}\cdots a_d^{\epsilon_dk_d}\notag\\ &=a_1^{\epsilon_1k_1}\cdots a_i^{\epsilon_ik_i}a_ia_{i+1}^{\epsilon_{i+1}k_{i+1}}\cdots a_d^{\epsilon_dk_d}\cdot[a_i, a_1^{\epsilon_1k_1}\cdots a_i^{\epsilon_ik_i}]^{s_{i+1}^{\epsilon_{i+1}k_{i+1}}\cdots s_d^{\epsilon_dk_d}},
\end{align*}
where the last equation follows from
\begin{align*}
    (a_{i+1}^{\epsilon_{i+1}k_{i+1}}\cdots a_d^{\epsilon_dk_d})^{-1}[a_i, a_1^{\epsilon_1k_1}\cdots a_i^{\epsilon_ik_i}](a_{i+1}^{\epsilon_{i+1}k_{i+1}}\cdots a_d^{\epsilon_dk_d})\\
    =[a_i, a_1^{\epsilon_1k_1}\cdots a_i^{\epsilon_ik_i}]^{s_{i+1}^{\epsilon_{i+1}k_{i+1}}\cdots s_d^{\epsilon_dk_d}}.
\end{align*}
Set $f_q=[a_i, a_1^{\epsilon_1k_1}\cdots a_i^{\epsilon_ik_i}]^{s_{i+1}^{\epsilon_{i+1}k_{i+1}}\cdots s_d^{\epsilon_dk_d}}$. Then
\begin{equation}
    a_i\widehat{q} =a_1^{\epsilon_1k_1}\cdots a_i^{\epsilon_ik_i}a_ia_{i+1}^{\epsilon_{i+1}k_{i+1}}\cdots a_d^{\epsilon_dk_d}\cdot f_q \label{main15}
\end{equation}
holds. We show that $f_q\in T_n$ if $q\in J_n$.
By Lemma \ref{main1}, we have
\begin{equation}
    f_q=[a_i,a_1^{\epsilon_1k_1}...a_i^{\epsilon_ik_i}]^{s_{i+1}^{\epsilon_{i+1}k_{i+1}}\cdots s_d^{\epsilon_dk_d}}=\sum_{\substack{1\leq j\leq i\\:k_j\geq 1}}\sum_{l=0}^{k_j-1} [a_i,a_j^{\epsilon_j}]^{s_j^{\epsilon_jl}s_{j+1}^{\epsilon_{j+1}k_{j+1}}\cdots s_d^{\epsilon_dk_d}}.\label{main11}
\end{equation}
If $q\in J_n$, then each term $[a_i,a_j^{\epsilon_j}]^{s_j^{\epsilon_jl}s_{j+1}^{\epsilon_{j+1}k_{j+1}}\cdots s_d^{\epsilon_dk_d}}$ of the right-hand side is in 
\begin{equation*}
    \left\{\sum_{z\in Z_n}k_zz\in N\;\middle|\;|k_z|\leq n_0^2\text{ for every }z\in Z_n\right\}
\end{equation*}
since $[a_i,a_j^{\epsilon_j}]\in T_{n_0}$ and $s_j^{\epsilon_jl}s_{j+1}^{\epsilon_{j+1}k_{j+1}}\cdots s_d^{\epsilon_dk_d}\in J_n$. Also since the right-hand side of equation (\ref{main11}) has at most $m_ni$ terms, we have
\begin{align*}
    f_q\in \left\{\sum_{z\in Z_n}k_zz\in N\;\middle|\;|k_z|\leq n_0^2m_ni\text{ for every }z\in Z_n\right\}\subset T_n
\end{align*}
as desired. The last inclusion follows from $n_0^2m_ni\leq n^2$.\par 
Now since $P_n$ is $(T_n,1/n)$-invariant, we have
\begin{equation}
    |P_n\cap (-f_q+P_n)|\geq (1-1/n)|P_n| \label{main13}
\end{equation}
 for every $q\in J_n$. On the other hand, if $q\in s_i^{-1}I_n$, then we have
\begin{equation*}
    a_1^{\epsilon_1k_1}\cdots a_i^{\epsilon_ik_i} a_ia_{i+1}^{\epsilon_{i+1}k_{i+1}}\cdots a_d^{\epsilon_dk_d}=\widehat{s_iq}\in \widehat{I_n}
\end{equation*}
and thus
\begin{align*}
    a_i\widehat{q} (-f_q+P_n) =a_1^{\epsilon_1k_1}\cdots a_i^{\epsilon_ik_i}a_ia_{i+1}^{\epsilon_{i+1}k_{i+1}}\cdots a_d^{\epsilon_dk_d}\cdot (f_q-f_q+P_n)
    \subset \widehat{I_n}P_n
\end{align*}
 by equation (\ref{main15}). It follows that 
 \begin{equation*}
     \bigsqcup_{q\in I_n\cap s_i^{-1}I_n}a_i\widehat{q}(P_n\cap (-f_q+P_n))\subset F_n\cap a_iF_n.
 \end{equation*}
Then by inequality (\ref{main13}), we have
\begin{equation*}
    |F_n\cap a_iF_n|\geq \sum_{q\in J_n\cap s_i^{-1}I_n} |P_n\cap (-f_q+P_n)|\geq |J_n\cap s_i^{-1}I_n| \cdot (1-1/n)|P_n|.
\end{equation*}
Hence we have
\begin{equation*}
    \frac{|F_n\cap a_iF_n|}{|F_n|}\geq \frac{|J_n\cap s_i^{-1}I_n|}{|I_n|}\cdot (1-1/n)\rightarrow 1 \quad (n\rightarrow \infty).
\end{equation*}
\end{proof}

The following lemma is used in the next subsection to show that $F_n$ is a finite-to-one transversal for some subgroups in $G$. The proof is inspired by \cite[the proof of Lemma 10.9]{LL22}.

\begin{lem}\label{main3}
If $K\leq N$ is a finite index subgroup, then $P_n$ is a finite-to-one transversal for $K\cap M_n$ in $M_n$ for every $n$ sufficiently large.
\end{lem}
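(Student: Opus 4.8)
The plan is to reduce the assertion about the (typically infinitely generated) group $M_n$ to the corresponding assertion about the finitely generated free abelian group $L_n$, where a box-counting argument settles it. Throughout put $e_0=\exp(N/K)$, a fixed positive integer since $N/K$ is a finite abelian group. \emph{Reduction to $L_n$.} The $L_n$ form an increasing chain (because $\mathrm{B}_Q(2m_n-1,\Sigma_Q)$ increases with $n$), and $\bigcup_nL_n=N$: these boxes exhaust $Q$, while $N$ is generated as an abelian group by $\{x^q\mid x\in X,\ q\in Q\}=\bigcup_nZ_n$, using that $N$ is generated by $X$ as a $\mathbb{Z}[Q]$-module. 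Hence $L_nK/K$ is an increasing chain of subgroups of the finite group $N/K$ with union $N/K$, so it stabilizes; fix $n_1$ with $L_n+K=N$ for all $n\ge n_1$. For such $n$ we get $M_n\subseteq N=L_n+K$, and since $L_n\le M_n$ this gives $M_n=L_n+(M_n\cap K)$. As $L_n\cap K=L_n\cap(M_n\cap K)$, the second isomorphism theorem shows the inclusion $L_n\hookrightarrow M_n$ induces a bijection between the cosets of $L_n\cap K$ in $L_n$ and those of $M_n\cap K$ in $M_n$, compatible with $P_n\subseteq L_n$. Since a finite subset is a finite-to-one transversal precisely when it meets every coset the same finite number of times, $P_n$ is a finite-to-one transversal for $M_n\cap K$ in $M_n$ if and only if it is one for $L_n\cap K$ in $L_n$; it thus suffices to prove the latter for $n$ large.

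\emph{Box lemma and conclusion.} I would record the elementary fact that if $A$ is free abelian with basis $y_1,\dots,y_m$, $B\le A$ has finite index, and $e=\exp(A/B)$, then for every positive integer $N$ with $e\mid N$ the box $\bigl\{\sum_ik_iy_i\mid k_i\in[0,N-1]\bigr\}$ meets every coset of $B$ in exactly $N^m/[A:B]$ points, hence is a finite-to-one transversal for $B$ in $A$; the proof is that the kernel of $\mathbb{Z}^m\twoheadrightarrow A/B$ contains $e\mathbb{Z}^m$, the box is a disjoint union of $e\mathbb{Z}^m$-translates of $[0,e-1]^m$, and $[0,e-1]^m$ is a transversal for $e\mathbb{Z}^m$ in $\mathbb{Z}^m$. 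I would apply this with $A=L_n$, $B=L_n\cap K$ and the basis $Y_n$: the exponent of $L_n/(L_n\cap K)$ divides $e_0$ because $L_n/(L_n\cap K)$ embeds in $N/K$, and $e_0\mid n!$ once $n\ge e_0$, so $e_0$ divides the side length $l_nn!$ of $P_n$. Therefore $P_n$ is a finite-to-one transversal for $L_n\cap K$ in $L_n$, hence for $K\cap M_n$ in $M_n$, for every $n\ge\max(n_1,e_0)$.

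The main obstacle is the reduction step: recognising that only the finitely generated subgroup $L_n$ of $M_n$ matters modulo the fixed finite-index subgroup $K$, and that $\bigcup_nL_n=N$ so that $L_n$ eventually surjects onto $N/K$. After that, the box lemma is a routine divisibility computation — the same mechanism as in the proof of \cite[Lemma~10.9]{LL22} — and it makes transparent the role of the factor $n!$ in the definition of $P_n$: it absorbs the a priori unknown exponent $\exp(N/K)$ for all large $n$.
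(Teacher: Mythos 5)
Your proof is correct and follows essentially the same route as the paper: reduce the statement to the finitely generated free abelian group $L_n$ (using that $L_n$ increases to $N$, so the coset spaces of $L_n\cap K$ in $L_n$ and of $M_n\cap K$ in $M_n$ agree for large $n$), then use that the box side length $l_nn!$ is divisible by a bound on the exponent of $L_n/(L_n\cap K)$. The only cosmetic differences are that you work with $e_0=\exp(N/K)$ where the paper uses $k=[N:K]$ (noting $kL_n\le L_n\cap K$), and you obtain the coset bijection from $L_n+K=N$ rather than from equality of the indices.
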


\begin{proof}
Let $K\leq N$ be a finite index subgroup and set $k=[N:K]$. Since $L_n\rightarrow N$ in Sub$(N)$, there exists $n_0\in\mathbb{N}$ such that $[L_n:L_n\cap K]=k$ for every $n\geq n_0$. Let $n\geq$ max$\{k,n_0\}$. Then 
\begin{equation*}
   kL_n= \left\{\sum_{y\in Y_n}k_y y\in L_n\;\middle|\;k_y\in k\mathbb{Z}\text{ for  every }y\in Y_n\right\}\leq L_n\cap K
\end{equation*}
holds since all $k$-th power is trivial in $L_n/(L_n\cap K)$. Recall that $Y_n$ is a basis of the free abelian group $L_n$ and 
\begin{equation*}
    P_n=\left\{\sum_{y\in Y_n}k_y y\in L_n\;\middle|\;k_y\in [0,l_nn!-1]\text{ for every }y\in Y_n\right\}.
\end{equation*}
Since $n\geq k$, the integer $k$ divides $l_nn!$ and thus $P_n$ is a finite-to-one transversal for $kL_n$ in $L_n$. Hence it is a finite-to-one transversal for $L_n\cap K$ in $L_n$. Now we have $L_n\leq M_n$ and $k=[L_n:L_n\cap K]\leq [M_n:M_n\cap K]\leq [N:K]=k$. Thus $[L_n:L_n\cap K]=[M_n:M_n\cap K]$ holds. It follows that every transversal for $L_n\cap K$ in $L_n$ is also a transversal for $M_n\cap K$ in $M_n$. Hence $P_n$ is a finite-to-one transversal for $M_n\cap K$ in $M_n$.
\end{proof}

\subsection{Construction of the finite index subgroups}\label{mains2}

Recall Notation \ref{pre12}. In this subsection, we prove that the F{\o}lner sequence $(F_n)$ constructed in Lemma \ref{main16} satisfies the following:\\
($\#_R$) If $H\leq G$ is a subgroup with $Q_H=R$ and $[N:\textup{N}_N(H)]<\infty$, then there exists a sequence of finite index subgroups $K_n\leq G$ such that the sequence of pairs $(K_n,F_n)$ is a Weiss approximation of $H$.\par
This part is essentially the same as \cite[section 10]{LL22}, and the lemmas below are proved quite analogously. However the setting is changed at some points, so we give their proofs for the reader's convenience.\par

Let $H\leq G$ be a subgroup with $Q_H=R$ and $[N:$ N$_N(H)]<\infty$. We will construct a sequence of finite index subgroups $K_n\leq G$ with $K_n=[Q_n,N_n,\alpha_n]$. The subgroups $Q_n$ are already defined. Now we define $N_n$.

\begin{lem}[{\cite[Lemma 10.4]{LL22}}]\label{main18}
For every $n\in\mathbb{N}$, there exists a finite index subgroup $N_n\leq N$ such that $O_n\leq N_n,\ N_H\cap M_n\leq N_n,\ N_H\cap T_n=N_n\cap T_n$ and $N_n$ is $Q_n$-invariant.
\end{lem}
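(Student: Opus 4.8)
The plan is to build $N_n$ as a finite-index, $Q_n$-invariant subgroup that simultaneously contains the prescribed submodule $O_n$ and the ``boundary data'' $N_H\cap M_n$, while agreeing with $N_H$ on the finite window $T_n$. First I would record the structural inputs: $[N:\textup{N}_N(H)]<\infty$, so $\textup{N}_N(H)$ is a finite-index subgroup of $N$; also $O_n$ is a $\mathbb{Z}[Q]$-submodule, in particular $Q_n$-invariant, and by Lemma \ref{main2}(i) the action of $V_n$ on $N/O_n$ is trivial, so modulo $O_n$ the $Q_n$-action reduces to an action of $R\le U$. Since $N$ is a finitely generated $\mathbb{Z}[Q]$-module (Theorem \ref{comm1}) and $\mathbb{Z}[Q]$ is Noetherian, $N/O_n$ is a Noetherian $\mathbb{Z}[R]$-module. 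The finite set $T_n$ and the finite-index subgroup $N_H\cap M_n$ will be the ``obstructions'' we must respect.

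The concrete construction: start from the finite-index subgroup $\textup{N}_N(H)$ and intersect it with $O_n$ plus a suitable finite-index $Q_n$-invariant subgroup. More precisely, I would first apply Proposition \ref{pre3} to $H$ (noting $HN=H N$ is finitely generated since $G$ is finitely generated and $HN/N=R$ is finitely generated) with the finite set $T_n\cup(\text{a finite generating set of the image of }N_H\cap M_n)$ to obtain a finite-index $Q_H$-invariant subgroup $M\le N$ with $N_H\cap T_n=M\cap T_n$ and $N_H\le M$. Then set $N_n$ to be the $Q_n$-invariant hull of $O_n + (M\cap (\text{something}))$; the issue is that $O_n+M$ need not be $Q_n$-invariant on the nose because $M$ is only $Q_H=R$-invariant, not $V_n$-invariant. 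Here I would use that $V_n$ acts trivially on $N/O_n$: the preimage under $N\to N/O_n$ of the image of $M$ is automatically $V_n$-invariant and $R$-invariant, hence $Q_n$-invariant, and it contains $O_n$ and $M$, so in particular $N_H\cap M_n\le M\le N_n$ and $N_H\le N_n$. Finite index of $N_n$ follows since it contains the finite-index $M$. Finally $N_H\cap T_n=N_n\cap T_n$: the inclusion $\subset$ is clear; for $\supset$, if $f\in N_n\cap T_n$ then $f\in O_n+M$, and since $T_n\subset M_n$ and $M_n$ is a complement to $O_n$ (Lemma \ref{main2}(iii), i.e.\ $\pi_n|_{M_n}$ is a bijection, so $N=M_n\oplus O_n$), $f\in T_n\cap M_n$ forces $\pi_n(f)\in\pi_n(M)$, and one deduces $f\in M\cap T_n=N_H\cap T_n$ after checking the bookkeeping against the direct sum decomposition.

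The step I expect to be the main obstacle is reconciling $Q_n$-invariance with the constraint $N_H\cap T_n=N_n\cap T_n$: enlarging $M$ to a $V_n$-invariant subgroup could in principle pull in elements of $T_n$ that are not in $N_H$, destroying the equality on the window. The key mechanism that makes it work is the direct sum $N=M_n\oplus O_n$ together with $T_n\subset M_n$ and the triviality of the $V_n$-action on $N/O_n$: the $V_n$-saturation only adds elements of $O_n$ (up to the $M_n$-component being unchanged), so it cannot corrupt the $M_n$-part, hence cannot corrupt $T_n$. Carefully tracking which component of the decomposition each relevant element lives in — and using the uniqueness half of Proposition \ref{comm13} that underlies Lemma \ref{main2}(iii) — is the crux; once that is in place the remaining verifications ($O_n\le N_n$, finite index, $N_H\cap M_n\le N_n$, $Q_n$-invariance) are immediate from the construction.
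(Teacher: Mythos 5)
There is a genuine gap at the very first step. You invoke Proposition \ref{pre3} with ambient group $G$, abelian normal subgroup $N$, and subgroup $H$, but that proposition requires $HN$ to be finitely generated, and your justification (``$G$ is finitely generated and $HN/N=R$ is finitely generated'') does not establish this: $HN$ contains all of $N=F_d'/F_d''$, which is an infinitely generated abelian group (already for $d=2$ one has $N\cong\mathbb{Z}[Q]=\mathbb{Z}[\mathbb{Z}^2]$, free abelian of infinite rank). Finite generation of the quotient $HN/N$ says nothing about $HN$ itself when the kernel is infinitely generated. So Proposition \ref{pre3} cannot be applied in the way you propose, and the subgroup $M$ you want to start from does not exist by that route. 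The paper sidesteps this by passing to the quotient $N/O_n$ and working inside the auxiliary semidirect product $\Gamma_n=Q\ltimes N/O_n$, applying Proposition \ref{pre3} to the subgroup $\widetilde{H_n}=Q_n\cdot\pi_n(N_H\cap M_n)$ with abelian kernel $N/O_n$ and finite set $\pi_n(T_n)$; there the relevant product is $\widetilde{H_n}\cdot(N/O_n)=Q_n\cdot(N/O_n)$, which is finite index in $\Gamma_n$ and hence finitely generated because $N/O_n$ is a finitely generated $\mathbb{Z}[Q]$-module. Quotienting by $O_n$ is therefore not a cosmetic reshuffle in the paper's proof --- it is exactly what makes the finite-generation hypothesis of Proposition \ref{pre3} hold.

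There is also a secondary problem in your window bookkeeping. Even if an $M$ with $N_H\le M$ and $N_H\cap T_n=M\cap T_n$ were available, setting $N_n=\pi_n^{-1}(\pi_n(M))=M+O_n$ does not obviously give $N_n\cap T_n=N_H\cap T_n$. From $f\in(M+O_n)\cap T_n$ you write $f=m+o$ with $m\in M$, $o\in O_n$, but there is no reason for $m$ to lie in $M_n$, so the direct-sum decomposition $N=M_n\oplus O_n$ does not force $f=m$, and the inclusion $(M+O_n)\cap T_n\subset M\cap T_n$ can fail. The paper avoids this by feeding $\pi_n(N_H\cap M_n)$ --- not $N_H$ --- into Proposition \ref{pre3}, so the resulting $\widetilde{N_n}\le N/O_n$ already agrees with $\pi_n(N_H\cap M_n)$ on $\pi_n(T_n)$; since $T_n\subset M_n$ and $\pi_n|_{M_n}$ is injective (Lemma \ref{main2}(iii)), pulling back by $\pi_n$ yields $N_n\cap T_n=N_H\cap T_n$ with no ambiguity. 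So both the choice of ambient group for Proposition \ref{pre3} and the choice of reference subgroup ($N_H\cap M_n$ rather than $N_H$) are essential and missing from your proposal.
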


\begin{proof}
Let $\Gamma_n:=Q\ltimes N/O_n$ be the semidirect product induced from the $\mathbb{Z}[Q]$-structure on $N/O_n$. It is finitely generated since $N/O_n$ is a finitely generated $\mathbb{Z}[Q]$-module. The subgroup $\pi_n(N_H\cap M_n)\leq N/O_n$ is $Q_n(=R\times V_n)$-invariant. Indeed $N_H$ is $R$-invariant since $Q_H=R$, $M_n$ is $R$-invariant by by Lemma \ref{main2} (ii), and $V_n$ acts on $N/O_n$ trivially by Lemma \ref{main2} (i). Hence 
\begin{equation*}
    \widetilde{H_n}:=Q_n\cdot\pi_n(N_H\cap M_n)\subset\Gamma_n
\end{equation*}
is a subgroup. Since $Q_n$ is finite index in $Q$, the subgroup $Q_n\cdot N/O_n\leq\Gamma_n$ is finitely generated. Then by Proposition \ref{pre3} (take $G,N,H,T$ of Proposition \ref{pre3} to be $\Gamma_n,N/O_n,\widetilde{H_n},\pi_n(T_n)$), there exists a finite index subgroup $\widetilde{N_n}\leq N/O_n$ such that $\pi_n(N_H\cap M_n)\cap \pi_n(T_n)=\widetilde{N_n}\cap\pi_n(T_n),\ \pi_n(N_H\cap M_n)\leq \widetilde{N_n}$ and $\widetilde{N_n}$ is $Q_n$-invariant. Now we set $N_n=\pi_n^{-1}(\widetilde{N_n})$. Clearly $N_H\cap M_n\leq N_n$ and $N_n$ is $Q_n$-invariant. Also ker$\:\pi_n=O_n\leq N_n$ holds. Finally since $T_n\subset M_n$ and $\pi_n|_{M_n}$ is injective by Lemma \ref{main2} (iii), $N_H\cap T_n=N_n\cap T_n$ holds.
\end{proof}

Next we define $\alpha_n$ and $K_n$.

\begin{lem}[{\cite[Lemma 10.6]{LL22}}]\label{main17}
There exist $n_0\in\mathbb{N}$ and finite index subgroups $K_n\leq G$ with $[K_n]=[Q_n,N_n,\alpha_n]$ defined for $n\geq n_0$ such that the sequence $\alpha_n|_R$ is consistent with $\alpha_H$.
\end{lem}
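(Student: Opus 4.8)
The plan is to construct $\alpha_n\colon Q_n\to\textup{N}_G(N_n)/N_n$ by treating the two factors of $Q_n=R\times V_n$ differently: on $R$ I use lifts coming from the given subgroup $H$, and on $V_n$ I use the section $\widehat{\cdot}$. Since $R$ is a finite index subgroup of $U=Q^{(c)}\cong\mathbb{Z}^c$, it is free abelian of rank $c$; fix a basis $r_1,\dots,r_c$ of $R$, and using $Q_H=R$ choose $h_i\in H$ with $\overline{h_i}=r_i$ for $1\leq i\leq c$. Recall that $V_n$ is freely generated by $\{s_k^{2m_n}\}_{k=c+1}^d$ and that $\widehat{s_k^{2m_n}}=a_k^{2m_n}$. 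The two sides must be handled separately because $R$ may act non-trivially on $N/N_n$, whereas $V_n$ acts trivially on $N/O_n$ and $O_n\leq N_n$ by Lemma \ref{main18}. This mirrors \cite[Section 10]{LL22}, the new input being that in the non-split setting the section is only ``multiplicative modulo $O_n$'' on the $V_n$-direction.

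For the $R$-side, first observe that every $g\in G$ with $\overline{g}\in Q_n$ normalizes $N_n$: writing $g=\widehat{\overline{g}}\,f$ with $f\in N$, the factor $\widehat{\overline{g}}$ normalizes $N_n$ because $N_n$ is $Q_n$-invariant (Lemma \ref{main18}), and $f$ normalizes $N_n$ because $N$ is abelian. In particular all $h_i$ and all $a_k^{2m_n}$ lie in $\textup{N}_G(N_n)$. The key step is that the assignment $r_i\mapsto h_iN_n$ extends to a homomorphism $\alpha_n^R\colon R\to\textup{N}_G(N_n)/N_n$ for all $n$ larger than some $n_0$. Indeed $\overline{[h_i,h_j]}=1$ since $Q$ is abelian, so $[h_i,h_j]\in N\cap H=N_H$; and since $L_n\to N$ in $\textup{Sub}(N)$ and $L_n\leq M_n$ (Lemma \ref{comm11}), for $n\geq n_0$ we get $[h_i,h_j]\in N_H\cap M_n\leq N_n$ by Lemma \ref{main18}. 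As $R$ is free abelian, this vanishing of the commutators modulo $N_n$ is exactly the obstruction to well-definedness, so $\alpha_n^R$ exists for $n\geq n_0$ and plainly $\overline{\alpha_n^R(q)}=q$. I expect this to be the main obstacle of the whole lemma: it is precisely the point where the $U$-residue $M_n$ of $O_n$ and the bijectivity in Proposition \ref{comm12}, both encoded into $N_n$ through Lemma \ref{main18}, are needed.

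For the $V_n$-side, the elements $\beta_k:=a_k^{2m_n}N_n$ commute pairwise in $\textup{N}_G(N_n)/N_n$ since $[a_k^{2m_n},a_l^{2m_n}]\in O_n\leq N_n$ directly from the definition of $O_n$, so there is a homomorphism $\alpha_n^V\colon V_n\to\textup{N}_G(N_n)/N_n$ with $\alpha_n^V(s_k^{2m_n})=\beta_k$; multiplying cosets gives $\alpha_n^V(v)=\widehat{v}N_n$ for all $v\in V_n$, hence $\overline{\alpha_n^V(q)}=q$. The images of $\alpha_n^R$ and $\alpha_n^V$ commute elementwise because $[h_i,a_k^{2m_n}]\in O_n\leq N_n$ for $i\leq c<k$, again by the definition of $O_n$ (cf.\ identities (\ref{pre6}) and (\ref{comm15})). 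Therefore $\alpha_n(rv):=\alpha_n^R(r)\,\alpha_n^V(v)$ is a well-defined homomorphism $\alpha_n\colon Q_n\to\textup{N}_G(N_n)/N_n$ with $\overline{\alpha_n(q)}=q$ for all $q\in Q_n$, so Proposition \ref{pre1} yields a unique $K_n\leq G$ with $[K_n]=[Q_n,N_n,\alpha_n]$, and $[G:K_n]=[Q:Q_n]\,[N:N_n]<\infty$. Finally, for consistency, given $q\in R$ write $q=r_1^{k_1}\cdots r_c^{k_c}$ and put $\lambda_q:=h_1^{k_1}\cdots h_c^{k_c}\in G$, which does not depend on $n$; since $R$ and $H/N_H$ are abelian, $\alpha_n|_R(q)=\alpha_n^R(q)=\lambda_qN_n$ and $\alpha_H(q)=\lambda_qN_H$ for every $n\geq n_0$, so the sequence $\alpha_n|_R$ is consistent with $\alpha_H$.
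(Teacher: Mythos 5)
Your proof is correct, but it takes a genuinely different route from the paper's, so a comparison is worth making. The paper handles the $R$-side of $Q_n=R\times V_n$ by citing Proposition \ref{pre4}: having established $N_n\to N_H$ in $\textup{Sub}(N)$ (from $N_H\cap T_n=N_n\cap T_n$ and $\bigcup T_n=N$), it obtains a consistent sequence $\alpha_n'\colon R\to\textup{N}_G(N_n)/N_n$ as a black box, then extends each $\alpha_n'$ to $Q_n$ exactly as you do on $V_n$, by $s_i^{2m_n}\mapsto a_i^{2m_n}N_n$, with well-definedness following from $[g,a_i^{2m_n}]\in O_n\leq N_n$. You instead build $\alpha_n^R$ by hand: pick a basis $r_1,\dots,r_c$ of $R$ (free abelian since $R\leq Q\cong\mathbb{Z}^d$), choose $h_i\in H$ with $\overline{h_i}=r_i$, and verify that $[h_i,h_j]\in N_H$ lies in $N_n$ for large $n$ (via $L_n\to N$, $L_n\leq M_n$, and $N_H\cap M_n\leq N_n$); the explicit formula $\lambda_q=h_1^{k_1}\cdots h_c^{k_c}$ then gives consistency directly. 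Both constructions land in essentially the same place, but yours is more elementary and self-contained (it bypasses \cite[Corollary 6.3]{LL22}), at the cost of exploiting the special fact that $R$ is finitely generated free abelian so that pairwise commutation of the chosen lifts is the only obstruction; the paper's citation of Proposition \ref{pre4} works more generally. One stylistic remark: your detour through $L_n\leq M_n$ to get $[h_i,h_j]\in N_n$ could be shortened by observing directly that $N_n\to N_H$ (the paper's first line), which already gives $[h_i,h_j]\in N_n$ for large $n$ since $[h_i,h_j]\in N_H$.
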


\begin{proof}
Since $N_H\cap T_n=N_n\cap T_n$ and the sequence $T_n$ is increasing to $N$, we have $N_n\rightarrow N_H$ in Sub$(N)$. Then by Proposition \ref{pre4}, there exist homomorphisms $\alpha_n':R\rightarrow$ N$_G(N_n)/N_n$ for all $n$ sufficiently large such that the sequence $\alpha_n'$ is consistent with $\alpha_H$. Then for each $n$, we extend $\alpha_n'$ to $\alpha_n:Q_n\rightarrow$ N$_G(N_n)/N_n$ by $\alpha_n(s_i^{2m_n})=a_i^{2m_n}N_n$ for $c+1\leq i\leq d$. This is well-defined. Indeed since $Q_n=R\times V_n$ and $V_n$ is a free abelian group generated by $\{s_i^{2m_n}\}_{i=c+1}^d$, it suffices to show $[g,a_i^{2m_n}]\in N_n$ for every 
$g\in G$ and $c+1\leq i\leq d$. By the definition of $O_n$, $[g,a_i^{2m_n}]\in O_n\leq N_n$ holds, and it is done. Then $\alpha_n$ satisfies $\overline{\alpha_n(q)}=q$ for every $q\in Q_n$.
There exists a unique $K_n\leq G$ such that $[K_n]=[Q_n,N_n,\alpha_n]$ by Proposition \ref{pre1}.
\end{proof}

Now we show that the sequence $(K_n,F_n)$ is a Weiss approximation of $H$.

\begin{lem}\label{main9}
We have $F_n*H - F_n*K_n \rightarrow 0$ in \textup{C(Sub}$(G))^*$.
\end{lem}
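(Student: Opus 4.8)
The plan is to verify that the hypotheses of Theorem \ref{pre5} are satisfied by the data $(K_n, M_n, T_n, I_n, P_n)$ constructed in Section \ref{mains1} and \ref{mains2}, and then simply invoke that theorem (with the given $H$ and the finite sets $P_n \subset M_n$) to conclude $(\widehat{I_n}P_n)*H - (\widehat{I_n}P_n)*K_n \to 0$, i.e.\ $F_n*H - F_n*K_n \to 0$. So the proof is essentially a bookkeeping exercise: check conditions (i)--(v) of Theorem \ref{pre5} one by one.

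First I would record that condition (i), namely $Q_H \leq Q_n$ and $Q_n \to Q_H$ in $\mathrm{Sub}(Q)$, holds because $Q_H = R$ and by Lemma \ref{main5}(i) we have $R \leq Q_n$ and $Q_n \to R$. Condition (ii), that $M_n$ is $Q_H$-invariant, is Lemma \ref{main2}(ii) (since $Q_H = R$). Condition (iii), consistency of $\alpha_n|_{Q_H} = \alpha_n|_R$ with $\alpha_H$, is exactly the conclusion of Lemma \ref{main17}. Condition (iv), $N_H \cap T_n = N_n \cap T_n$ and $N_H \cap M_n \leq N_n$, is furnished by Lemma \ref{main18}. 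Condition (v), that the sequence $T_n$ is adapted to the sequence $\widehat{I_n}$, is Lemma \ref{main12}. All five conditions hold for every $n$ sufficiently large (the subgroups $K_n$ and hence $N_n, \alpha_n$ are only defined for $n \geq n_0$), which is enough for the asymptotic statement.

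Having checked (i)--(v), I would apply Theorem \ref{pre5} with the sequence of non-empty finite subsets $P_n \subset M_n$ (recall $P_n \subset L_n \subset M_n$ was verified in Section \ref{mains1}), obtaining $(\widehat{I_n}P_n)*H - (\widehat{I_n}P_n)*K_n \to 0$ in $\mathrm{C}(\mathrm{Sub}(G))^*$. Since $F_n = \widehat{I_n}P_n$ by definition (Lemma \ref{main16}), this is precisely the claim $F_n*H - F_n*K_n \to 0$.

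I do not expect any genuine obstacle here: all the hard work---constructing the $U$-residue $M_n$ and proving $\pi_n|_{M_n}$ is a bijection (Proposition \ref{comm12}), proving $T_n$ is adapted (Lemma \ref{main12}), and building the finite index subgroups $K_n$ with the right isomorphism type (Lemmas \ref{main18} and \ref{main17})---has already been done. The only mild care needed is to make sure the indexing of ``sufficiently large $n$'' is handled uniformly (Lemmas \ref{main17}, \ref{main18}, \ref{main12} each produce their conclusions only eventually), but a limit statement is insensitive to finitely many initial terms, so this is harmless. The closest thing to a subtlety is simply matching the hypothesis ``$M_n \leq N$ a subgroup, $T_n \subset M_n$ and $P_n \subset M_n$ non-empty finite'' in Theorem \ref{pre5} with what we have built, which is immediate from the remarks already made in Section \ref{mains1}.
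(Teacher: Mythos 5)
Your proposal is correct and matches the paper's own proof exactly: verify conditions (i)--(v) of Theorem \ref{pre5} via Lemma \ref{main5}(i), Lemma \ref{main2}(ii), Lemma \ref{main17}, Lemma \ref{main18}, and Lemma \ref{main12} respectively, then apply Theorem \ref{pre5} with the $P_n$ as the finite subsets.
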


\begin{proof}
It suffices to verify conditions (i)-(v) of Theorem \ref{pre5}. Recall that $Q_H=R$. Conditions (i) follows from Lemma \ref{main5} (i). Condition (ii) follows from Lemma \ref{main2} (ii). Condition (iii) follows from Lemma \ref{main17}. Condition (iv) follows from Lemma \ref{main18}. Finally condition (v) follows from Lemma \ref{main12}.
\end{proof}

\begin{lem}[{\cite[Lemma 10.8]{LL22}}]\label{main6}
For every $n\geq n_0$, $\textup{N}_N(H)\cap M_n\leq \textup{N}_N(K_n)$ holds.
\end{lem}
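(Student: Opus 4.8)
The plan is to verify that an element $f \in \mathrm{N}_N(H) \cap M_n$ normalizes $K_n$, using the description $[K_n] = [Q_n, N_n, \alpha_n]$ from Notation \ref{pre12} together with Proposition \ref{pre1}. Recall that a subgroup of $G$ is determined by its triple $[Q_{(\cdot)}, N_{(\cdot)}, \alpha_{(\cdot)}]$, and that $f$ (an element of the abelian normal subgroup $N$) normalizes $K_n$ precisely when conjugation by $f$ fixes this triple. Since $f \in N$, conjugation by $f$ does not move the $Q$-part: $Q_{fK_nf^{-1}} = Q_{K_n} = Q_n$. So the task reduces to checking that the $N$-part and the isomorphism part are also unchanged, i.e. $N_{fK_nf^{-1}} = N_n$ and $\alpha_{fK_nf^{-1}} = \alpha_n$.

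First I would treat the $N$-part. We have $N_{fK_nf^{-1}} = N \cap fK_nf^{-1} = f(N \cap K_n)f^{-1} = f N_n f^{-1} = N_n$, where the last equality holds because $N$ is abelian so conjugation by $f \in N$ is trivial on $N \supseteq N_n$. Thus $N_{fK_nf^{-1}} = N_n$ automatically, with no hypothesis needed. Next, for the isomorphism part: for $q \in Q_n$, we have $\alpha_n(q) = g_q N_n$ for some lift $g_q \in \mathrm{N}_G(N_n)$ with $\overline{g_q} = q$, and $\alpha_{fK_nf^{-1}}(q) = fg_qf^{-1}N_n$. So the two agree on $q$ iff $fg_qf^{-1}g_q^{-1} = [g_q, f]^{-1}$... more precisely iff $f^{-1}(fg_qf^{-1})g_q^{-1} \cdot \text{(conjugation bookkeeping)} \in N_n$; unwinding, the condition is $[g_q, f] \in N_n$ for every $q \in Q_n$ (here $[g_q,f] = g_q^{-1}f^{-1}g_qf \in N$ since $N$ is normal). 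So it suffices to show $[g, f] \in N_n$ for all $g \in G$ with $\overline{g} \in Q_n$; and since $N_n$ is a subgroup, by bilinearity of the commutator map $N \times Q \to N$ it is enough to check this for $g$ ranging over a generating set of such lifts, namely the $a_i$ with $\overline{a_i} \in Q_n$ — but actually more simply, since $Q_n = R \times V_n$ and for $g \in N$ the commutator already lies in $N_n$ (it is $0$), it suffices to check $[g, f] \in N_n$ for $g = \widehat{q}$, $q \in R$, and for $g = a_i^{2m_n}$, $c+1 \le i \le d$.

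For the $V_n$-directions this is the easy case: $[a_i^{2m_n}, f] = -[f, a_i^{2m_n}]^{\bar a_i^{-2m_n}} \in O_n \le N_n$ by the definition of $O_n$ (it is generated by the elements $[g, a_i^{2m_n}]$) together with $O_n \le N_n$ from Lemma \ref{main18}. For the $R$-directions, this is where the hypothesis $f \in \mathrm{N}_N(H)$ is used. Since $Q_H = R$, for each $q \in R$ there is $h_q \in H$ with $\overline{h_q} = q$; writing $g_q = \widehat{q}$ we have $h_q = g_q \cdot n_q$ for some $n_q \in N$, so $[g_q, f] = [h_q n_q^{-1}, f]$, and expanding via \eqref{pre6} (and using that $[n_q^{-1}, f] = 0$ as $N$ is abelian) gives $[g_q, f] = [h_q, f]^{\overline{n_q^{-1}}} = [h_q, f]$ since $\overline{n_q^{-1}} = 1$. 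Now $f \in \mathrm{N}_N(H)$ means $h_q^{-1} f^{-1} h_q f \cdot$(adjust) $\in H$; precisely $[h_q, f] = h_q^{-1}f^{-1}h_qf \in H \cap N = N_H$, because $h_q \in H$, $f \in \mathrm{N}_N(H)$ implies $f^{-1}h_q f \in H$, hence $h_q^{-1}(f^{-1}h_qf) \in H$, and it lies in $N$. So $[g_q, f] \in N_H$. Finally, to conclude $[g_q, f] \in N_n$ I would invoke $N_H \cap M_n \le N_n$ from Lemma \ref{main18}: since $f \in M_n$ and, for $q \in R$, the commutator $[g_q, f] = [\widehat{q}, f]$ — using $R \le U$, $M_n$ being $U$-invariant (Lemma \ref{main2}(ii)), and the commutator formula \eqref{pre6} expressing $[\widehat q, f]$ as a $\mathbb{Z}[R]$-combination of terms of the form $f^{(\cdot)} - f$ with the shifts in $U$, which stay in $M_n$ — we get $[g_q, f] \in M_n$. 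Hence $[g_q, f] \in N_H \cap M_n \le N_n$, as needed.

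**The main obstacle** I anticipate is the bookkeeping in the $R$-direction: one must be careful that $[\widehat{q}, f]$ genuinely lands in $M_n$ (not merely in $N$), which relies on $M_n$ being closed under the $U$-action and on the explicit shift structure coming from Lemma \ref{main1} / equation \eqref{pre6}; and one must correctly match the consistency datum $\alpha_n$ with the actual lifts $g_q$ so that "$\alpha_{fK_nf^{-1}}(q) = \alpha_n(q)$" really does reduce to "$[g_q, f] \in N_n$" for the \emph{same} lift $g_q$ used to define $\alpha_n$. Since $\alpha_n|_R$ is consistent with $\alpha_H$ (Lemma \ref{main17}), one can in fact take $g_q = h_q \in H$ as the lift, which streamlines the argument and makes the use of $f \in \mathrm{N}_N(H)$ transparent. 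Everything else is routine commutator algebra over $\mathbb{Z}[Q]$.
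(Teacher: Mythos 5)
Your proof is correct and, once the detour through the triple $[Q_n,N_n,\alpha_n]$ is unwound, computationally identical to the paper's: both decompose $q=rv$ with $r\in R$, $v\in V_n$, and use $N_H\cap M_n\leq N_n$ (together with $R$-invariance of $M_n$) for the $r$-part and $O_n\leq N_n$ for the $v$-part. The paper avoids the reduction to the $\alpha$-datum altogether by noting that for $g\in K_n$ with $\overline g=q$ one has $g^f = g\cdot[g,f] = g\cdot f^{1-q}$, so it is enough to show $f^{1-q}\in N_n\leq K_n$; this is a shorter route to the same identity $f^{1-q} = f^{1-r}+f^{r(1-v)}$. One small imprecision in your write-up: the pairing $(q,f)\mapsto [g_q,f]=f^{1-q}$ is not bilinear in $q$ --- it satisfies the cocycle identity $f^{1-q_1q_2} = f^{1-q_1}+\left(f^{q_1}\right)^{1-q_2}$ rather than additivity --- so ``reduce to generators by bilinearity'' isn't quite right; but your actual split $q=rv$ is precisely this cocycle identity, and for the $v$-part you need (as the paper does, via Lemma~\ref{main2}(i)) that $f'^{\,1-v}\in O_n$ for \emph{all} $f'\in N$ and $v\in V_n$, not only for the generators $s_i^{2m_n}$. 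Also $[a_i^{2m_n},f] = -[f,a_i^{2m_n}]$ directly (the identity $[a,b]^{-1}=[b,a]$); the extra conjugation by $\overline a_i^{-2m_n}$ in your formula is spurious, though harmless since $O_n$ is $Q$-invariant.
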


\begin{proof}
Let $f\in$ N$_N(H)\cap M_n$ and $g\in K_n$. Set $q=\overline{g}\in Q_n$. Then $g^f=g\cdot[g,f]$ and $[g,f]=f^{1-q}\in N$. It suffices to show $f^{1-q}\in N_n$. Write $q=rv$ with $r\in R$ and $v\in V_n$. We have
\begin{equation*}
     f^{1-q}=f^{1-r}+f^{r(1-v)}.
\end{equation*}
Since $Q_H=R$, we can take $h\in H$ such that $\overline{h}=r$. Then $f^{1-r}=[h,f]=h^{-1}\cdot f^{-1}hf\in H$ since $f\in$ N$_N(H)$. Also we have $f^{1-r}\in M_n$ since $f\in M_n$ and $M_n$ is $R$-invariant. It follows that $f^{1-r}\in N_H\cap M_n\leq N_n$. On the other hand, $f^{r(1-v)}\in O_n\leq N_n$ by Lemma \ref{main2} (i). Hence $f^{1-q}\in N_n$ holds.
\end{proof}

\begin{lem}[{\cite[Lemma 10.9]{LL22}}]\label{main7}
For every $n$ sufficiently large, $F_n$ is a finite-to-one transversal for $\textup{N}_G(K_n)$ in $G$.
\end{lem}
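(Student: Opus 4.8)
The plan is to verify the two hypotheses of Proposition \ref{pre2} with $\textup{N}_G(K_n)$ in place of $H$, and then assemble. Put $Q'=\textup{N}_G(K_n)N/N$ and $S=N\cap\textup{N}_G(K_n)$; these are the groups $Q_{\textup{N}_G(K_n)}$ and $N_{\textup{N}_G(K_n)}$ of Notation \ref{pre12}. I would first reduce the lemma to two claims: that $I_n$ is a finite-to-one transversal for $Q'$ in $Q$, and that $P_n$ is a finite-to-one transversal for $S$ in $N$. Granting these, one decomposes $I_n$ and $P_n$ into finitely many honest transversals for $Q'$ and $S$ respectively; since $(q,t)\mapsto\widehat{q}\,t$ is injective on $Q\times N$, the product $F_n=\widehat{I_n}P_n$ is then the corresponding disjoint union of sets $\widehat{I}T$, each of which is a transversal for $\textup{N}_G(K_n)$ in $G$ by Proposition \ref{pre2}; hence $F_n$ is a finite-to-one transversal for $\textup{N}_G(K_n)$. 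Throughout I use the elementary fact that a finite-to-one transversal for a subgroup is also a finite-to-one transversal for any finite index overgroup.

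The claim for $I_n$ is immediate from Lemma \ref{main5}(ii): $I_n$ is a finite-to-one transversal for $Q_n$ in $Q$, and $Q_n=K_nN/N\le Q'$ with $[Q:Q_n]<\infty$, so $I_n$ is a finite-to-one transversal for $Q'$ as well.

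The claim for $P_n$ is the heart of the proof. The obstacle is that $S=\textup{N}_N(K_n)$ depends on $n$, whereas the threshold in Lemma \ref{main3} depends on the index of the subgroup; so one must re-run the argument of Lemma \ref{main3} with $K=S$ rather than quote it. Two uniform facts make this possible. First, a uniform index bound: $S\supseteq N\cap K_n=N_n$ and, by Lemma \ref{main6}, $S\supseteq\textup{N}_N(H)\cap M_n$ for $n$ large, so $S\supseteq N_n+(\textup{N}_N(H)\cap M_n)$; combining $O_n\le N_n$ (Lemma \ref{main18}) with $M_n\cap O_n=0$ and $M_n+O_n=N$ (Lemma \ref{main2}(iii)) gives $[N:S]\le[M_n:\textup{N}_N(H)\cap M_n]\le[N:\textup{N}_N(H)]=:m$, a bound independent of $n$. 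Second, $L_n+S=N$ for all $n$ large: since $L_n$ increases to $N$ in $\textup{Sub}(N)$, for $n$ large $L_n$ contains a fixed transversal for $\textup{N}_N(H)$ in $N$, which forces $L_n+(\textup{N}_N(H)\cap M_n)=M_n$ and hence $L_n+S\supseteq M_n+O_n=N$. With these two facts (and $n\ge m\ge[N:S]$ for $n$ large) the argument in the proof of Lemma \ref{main3} goes through with $K=S$ and shows $P_n$ is a finite-to-one transversal for $S\cap M_n$ in $M_n$. By Lemma \ref{main14}, $P_n$ is then a finite-to-one transversal for $\pi_n'^{-1}(S\cap M_n)$ in $N$; and since $\pi_n'$ is the projection of $N$ onto $M_n$ with kernel $O_n\le S$, one has $\pi_n'^{-1}(S\cap M_n)=(S\cap M_n)+O_n=S$ by the modular law. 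This gives the claim for $P_n$ and finishes the proof.

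I expect the main obstacle to be exactly this last paragraph: obtaining the conclusion of Lemma \ref{main3} for the $n$-dependent subgroup $\textup{N}_N(K_n)$. It is here that the finite-index hypothesis on $\textup{N}_N(H)$, the containment $\textup{N}_N(H)\cap M_n\le\textup{N}_N(K_n)$ from Lemma \ref{main6}, and the decomposition $N=M_n\oplus O_n$ from Proposition \ref{comm12} are all used at once, to produce the uniform bound $[N:\textup{N}_N(K_n)]\le[N:\textup{N}_N(H)]$ and the filling property $L_n+\textup{N}_N(K_n)=N$ required by the argument of Lemma \ref{main3}. Everything else — the piecewise application of Proposition \ref{pre2}, the overgroup principle for finite-to-one transversals, and the identity $\pi_n'^{-1}(S\cap M_n)=S$ — is routine bookkeeping, parallel to \cite[Lemma 10.9]{LL22}.
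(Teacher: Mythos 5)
Your proof is correct, and it does diverge from the paper's at the central step. The paper never re-runs Lemma~\ref{main3} for the $n$-dependent subgroup $\textup{N}_N(K_n)$: it applies Lemma~\ref{main3} once, to the \emph{fixed} finite-index subgroup $\textup{N}_N(H)$, to get that $P_n$ is a finite-to-one transversal for $\textup{N}_N(H)\cap M_n$ in $M_n$ for $n$ large; it then invokes Lemma~\ref{main6} to get the containment $\textup{N}_N(H)\cap M_n\le\textup{N}_N(K_n)\cap M_n$, and applies exactly the ``overgroup principle'' that you state at the outset as a fact you use freely. After that both proofs proceed identically via Lemma~\ref{main14} and Proposition~\ref{pre2}. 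Your route works too: the uniform bound $[N:\textup{N}_N(K_n)]\le[N:\textup{N}_N(H)]$ and the filling property $L_n+\textup{N}_N(K_n)=N$ do follow from the containment in Lemma~\ref{main6} together with the decomposition $N=M_n\oplus O_n$, and with those two facts the argument of Lemma~\ref{main3} does carry through with $K=\textup{N}_N(K_n)$. But the additional bookkeeping (re-deriving the index bound and the filling property) was avoidable: since $\textup{N}_N(H)\cap M_n$ sits inside $\textup{N}_N(K_n)\cap M_n$ with finite index, being a finite-to-one transversal passes straight up to the overgroup, which is precisely the principle you already cite for the $I_n$ half. The paper's route is shorter for that reason, and it also makes clearer that the role of the $n$-dependent group $\textup{N}_N(K_n)$ is only to receive the transversal, never to parametrise the threshold in Lemma~\ref{main3}.
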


\begin{proof}
Since $[N:$ N$_N(H)]<\infty$, the subset $P_n$ is a finite-to-one transversal for N$_N(H)\cap M_n$ in $M_n$ for every $n$ sufficiently large by Lemma \ref{main3}. Fix such $n\geq n_0$. Then $P_n$ is also a finite-to-one transversal for N$_N(K_n)\cap M_n$ in $M_n$ by the previous lemma. By Lemma \ref{main14}, $P_n$ is a finite-to-one transversal for $\pi_n'^{-1}(\text{N}_N(K_n)\cap M_n)$ in $N$. Since ker$\:\pi_n'=O_n\leq N_n\leq\text{N}_N(K_n)$ and $\pi_n'|_{M_n}=$ id$_{M_n}$, we have $\pi_n'^{-1}(\text{N}_N(K_n)\cap M_n)\leq$ N$_N(K_n)$. Hence $P_n$ is a finite-to-one transversal for N$_N(K_n)$ in $N$. On the other hand, $I_n$ is a finite-to-one transversal for $Q_{\text{N}_G(K_n)}$ in $Q$ by Lemma \ref{main5} (ii) and $Q_n\leq Q_{\text{N}_G(K_n)}$. Hence $F_n=\widehat{I_n}P_n$ is a finite-to-one transversal for N$_G(K_n)$ in $G$ by Proposition \ref{pre2}.
\end{proof}

By Lemmas \ref{main9} and \ref{main7}, the sequence $(K_n,F_n)$ is a Weiss approximation of $H$ by ignoring small $n$. Hence property ($\#_R$) holds, and Theorem \ref{intr1} is proved.

\section{Limits of P-stable groups}\label{unis}

\begin{lem}\label{uni1}
Let $G$ be a countable group and $G_k\leq G$ a sequence of subgroups such that $G_k\rightarrow G$ in \textup{Sub}$(G)$. Assume that for every $k$, there exists a homomorphism $p_k:G\rightarrow G_k$ such that $p_k|_{G_k}=\textup{id}_{G_k}$. If $G_k$ is P-stable for every $k$, then $G$ is P-stable.
\end{lem}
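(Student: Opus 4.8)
The plan is to take an arbitrary almost homomorphism $(\phi_n \colon G \to \mathrm{Sym}(n))_n$ and produce, using the P-stability of each $G_k$ together with the retractions $p_k$, a sequence of genuine homomorphisms $(\psi_n \colon G \to \mathrm{Sym}(n))_n$ with $d_\mathrm{H}(\phi_n(g), \psi_n(g)) \to 0$ for every $g \in G$. The key structural point is that $p_k \colon G \to G_k$ with $p_k|_{G_k} = \mathrm{id}_{G_k}$ lets us transport representations of $G_k$ back to $G$, while the convergence $G_k \to G$ in $\mathrm{Sub}(G)$ means that for each fixed $g \in G$ we have $g \in G_k$, hence $p_k(g) = g$, for all $k$ sufficiently large. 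So a homomorphism $\psi \colon G_k \to \mathrm{Sym}(n)$ gives $\psi \circ p_k \colon G \to \mathrm{Sym}(n)$, which agrees with "$\psi$ restricted along the inclusion" on the large-index elements.

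First I would fix, for each $k$, a finite exhausting set: since $G$ is countable, enumerate $G = \{g_1, g_2, \dots\}$ and set $E_k = \{g_1, \dots, g_k\}$. Restrict the given almost homomorphism along $G_k \hookrightarrow G$ to get $(\phi_n|_{G_k})_n$, which is an almost homomorphism of $G_k$. By P-stability of $G_k$ there is a sequence of homomorphisms $\psi^{(k)}_n \colon G_k \to \mathrm{Sym}(n)$ with $d_\mathrm{H}(\phi_n(h), \psi^{(k)}_n(h)) \to 0$ as $n \to \infty$ for every $h \in G_k$. Compose with $p_k$ to obtain honest homomorphisms $\widetilde{\psi}^{(k)}_n := \psi^{(k)}_n \circ p_k \colon G \to \mathrm{Sym}(n)$. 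For $g \in G_k$ we have $p_k(g) = g$, so $\widetilde{\psi}^{(k)}_n(g) = \psi^{(k)}_n(g)$ and therefore $d_\mathrm{H}(\phi_n(g), \widetilde{\psi}^{(k)}_n(g)) \to 0$ as $n \to \infty$.

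Then I would run a diagonal argument. For each $k$, choose $n_k$ (with $n_1 < n_2 < \cdots$) large enough that $d_\mathrm{H}(\phi_n(g), \widetilde{\psi}^{(k)}_n(g)) \le 1/k$ for all $n \ge n_k$ and all $g$ in $E_k \cap G_k$ — a finite set, so finitely many conditions — and also large enough (using $G_k \to G$) that $E_k \subset G_j$ for all $j \ge k$ is already arranged by passing to a subsequence of the $G_k$ if necessary; more precisely, reindex so that $E_k \subseteq G_k$ for every $k$, which is possible because $G_k \to G$ in $\mathrm{Sub}(G)$ means every finite subset of $G$ eventually lies in $G_k$. Now define $\psi_n = \widetilde{\psi}^{(k)}_n$ for $n_k \le n < n_{k+1}$. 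This is a homomorphism $G \to \mathrm{Sym}(n)$ for each $n$. Given $g \in G$, pick $k_0$ with $g \in E_{k_0} \subseteq G_{k_0}$; then for $k \ge k_0$ and $n_k \le n < n_{k+1}$ we have $g \in E_k \subseteq G_k$, so $\psi_n(g) = \widetilde{\psi}^{(k)}_n(g)$ and $d_\mathrm{H}(\phi_n(g), \psi_n(g)) \le 1/k \to 0$. Hence $(\psi_n)$ is the required sequence of homomorphisms, and $G$ is P-stable.

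The only genuinely delicate point is bookkeeping the two limits: the per-group approximation quality is controlled only as $n \to \infty$ with $k$ fixed, whereas at the end we need control as $n \to \infty$ with $k = k(n) \to \infty$; the diagonal choice of $n_k$ reconciles these, and one must be careful that the set of group elements on which each block $\widetilde{\psi}^{(k)}_n$ is being tested ($E_k \cap G_k = E_k$) is finite so that finitely many smallness conditions suffice to pick $n_k$. There is no analytic obstacle — the Hamming metric is bi-invariant and the composition with $p_k$ is exact, not approximate — so this is essentially a soft diagonalization once the reindexing $E_k \subseteq G_k$ is set up.
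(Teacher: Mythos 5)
Your proof is correct and follows essentially the same route as the paper's: restrict $\phi_n$ to $G_k$, invoke P-stability of $G_k$ to obtain approximating homomorphisms on $G_k$, and push them back to $G$ via precomposition with the retraction $p_k$. The paper compresses the last step by invoking the standard reduction that P-stability need only be verified on arbitrary finite subsets of $G$, whereas you carry out the diagonalization over $k$ and $n$ explicitly; the two are equivalent.
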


\begin{proof}
Let $(\phi_n:G\rightarrow$ Sym$(n))_n$ be an almost homomorphism. Let $F\subset G$ be a finite subset. It suffices to show that there exists a sequence of homomorphisms $(\psi_n:G\rightarrow$ Sym$(n))_n$ such that $d_\text{H}(\phi_n(g),\psi_n(g))\rightarrow 0\ (n\rightarrow\infty)$ for every $g\in F$. Fix $k$ such that $F\subset G_k$. Since $(\phi_n|_{G_k}:G_k\rightarrow \text{Sym}(n))_n$ is an almost homomorphism, there exists a sequence of homomorphisms $(\psi_n':G_k\rightarrow$ Sym$(n))_n$ such that $d_\text{H}(\phi_n(g),\psi_n'(g))\rightarrow 0\ (n\rightarrow\infty)$ for every $g\in F$. Then the sequence of homomorphisms $\psi_n:=\psi_n'\circ p_k$ satisfies the required condition.
\end{proof}

\begin{proof}[Proof of Corollary \ref{intr4}]
Let $F_\infty=\langle s_1,s_2,...\rangle$ and $F_k=\langle s_1,...,s_k\rangle\leq F_\infty$ for every $k\in\mathbb{N}$. Let $i_k:F_k/{F_k''}\rightarrow F_\infty/{F_\infty''}$ be the homomorphism induced from the inclusion $F_k\rightarrow F_\infty$. Also let $p_k:F_\infty/{F_\infty''}\rightarrow F_k/{F_k''}$ be the homomorphism induced from the natural projection $F_\infty\rightarrow F_k$. Then we have $p_k\circ i_k=$ id$_{F_k/{F_k''}}$. Since $F_k/{F_k''}$ is P-stable for every $k$ by Theorem \ref{intr1}, so is $F_\infty/{F_\infty''}$ by Lemma \ref{uni1}.
\end{proof}

\footnotesize
\textsc{Graduate School of Mathematical Sciences, the University of Tokyo, 3-8-1 Komaba, Tokyo 153-8914, Japan.}\par
\textit{E-mail address}: \texttt{ishikura8000@g.ecc.u-tokyo.ac.jp}

\end{document}